\theoremstyle{plain}
\newtheorem{lem}{Lemma}
\newtheorem{prop}[lem]{Proposition}
\newtheorem{thm}[lem]{Theorem}
\newtheorem{rul}{Rule}
\theoremstyle{definition}
\newtheorem*{defn}{Definition}
\theoremstyle{remark}
\newtheorem*{rem}{Remark}
\def\Z{\mathbb Z}
\def\Q{\mathbb Q}
\newcommand{\ord}{{\mathrm{ord}}}
\newcommand{\sing}{\mathop{\mathrm{Sing}}}
\def\OO{{\cal O}}
\def\OC{\widehat{\cal O}}
\def\GG{{G}}
\def\cc{z}
\def\factor{monomial factor}
\def\factors{monomial factors}
\def\Factors{Monomial factors}
\def\factorization{quotient}
\title{A Game for the Resolution of Singularities}
\author{Herwig Hauser and Josef Schicho\thanks{AMS Subject Classification: 14B05, 14E15, 12D10. This research was supported by the Austrian Science Fund (FWF) in the frame of the  research project P-21461.} }
\begin{document}

\maketitle

%-------------------------------------------------

\begin{abstract}
\noindent 

We propose a combinatorial game on finite graphs, called {\it Salmagundy}, that is played by two protagonists, Dido and Mephisto. The game captures the logical structure of a proof of the resolution of singularities. In each round, the graph of the game is modified by the moves of the players. When it assumes a final configuration, Dido has won. Otherwise, the game goes on forever, and nobody wins. In particular, Mephisto cannot win himself, he can only prevent Dido from winning.

We show that Dido always possesses a winning strategy, regardless of the initial shape of the graph and of the moves of Mephisto. This implies -- translating back to algebraic geometry -- that there is a choice of centers for the blowup of singular varieties  in characteristic zero which eventually leads to their resolution. The algebra needed for this implication is elementary. The transcription from varieties to graphs and from blowups to modifications of the graph thus axiomatizes the proof of the resolution of singularities. In principle, the same logic could also work in positive characteristic, once an appropriate descent in dimension is settled.
\end{abstract}

%-------------------------------------------------
%            INTRODUCTION
%-------------------------------------------------

\addtocounter{section}{1}

\section*{Introduction}

{\small\it
Are you ready? the {\em Umpire} asks -- and {\em Dido} and {\em Mephisto} jibe.
Dido says: I would like to {\em open a quest}. The Umpire unfolds a graph
$\Gamma$ on the desk, with nodes and edges. This will be the {\em main}
quest of your game, you may start to play. Dido nominates a node, the center, and Mephisto responds to it with a modification of the graph, the blowup, replacing the node by a new graph.
}\\

The existence of resolutions of singularities in characteristic zero
has been proven by Hironaka in his landmark paper \cite{Hironaka:64}.
Since then, several authors gave variations and simplifications of
Hironaka's proof. We mention \cite{Villamayor:89, Villamayor:92, Bierstone_Milman:97},
who decribed algorithms which are natural in the sense that they lead to
a resolution which is invariant under any group of automorphisms;
\cite{Encinas_Hauser:02}, which conceptualized the proof by introducing mobiles;
\cite{Wlodarczyk:05}, which simplified the proof by introducing 
homogenized ideals; and \cite{Fruehbis-Krueger_Pfister:03, Bodnar_Schicho:99f},
which describe implementations of the resolution algorithm.

All these proofs have to deal with the complication
that arises from the fact that the construction of coefficient ideals or
hypersurfaces of maximal contact is only locally possible. This then requires to prove that, despite of the local choices, the final resolution is independent of
these choices. 

In the present paper, we combine many of the ideas of earlier papers in order to introduce a new type of resolution datum which seems to be well suited for the inductive
definition of the resolution invariant. At any step of the
algorithm, our resolution problems are globally defined; each
construction is deterministic and does not depend on any local choices.
Consequently, the complication of proving that all centers of blowup
are globally defined does not arise.

Instead of defining an invariant, we interpret resolution as a game between two players. The first (our ``local hero'') attempts to improve the singularities. The second is
some malevolent adversary (the ``demon'') who tries to keep the singularities alive as long as possible. The first player chooses the centers of the blowups, the second provides new order functions after each blowup. The order function defines a stratification of the ambient space. The stratification can be described as a labelled
graph. Choosing the center corresponds to choosing a node of this graph. Each blowup operation produces a new labelled graph satisfying various restrictions. In this way, we may define the moves of the game as modifications of the labelled graph according to those
restrictions.

After the rules of the game are fixed, the proof of Hironaka's result
can be divided into two parts which are logically independent from
each other. First, one has to prove that the game is general enough
to model resolution problems. This part requires algebra,
but the proof is not very complex. Second, one has to prove that there
exists a winning strategy. This part requires to solve a complicated
combinatorial problem, but it does not need any algebra; in principle,
it is an argument about labelled graphs.

Apart from the logical advantage, this approach makes
it also easier to think of other resolution strategies. Hopefully, the algebraic properties of ideals and blowups which make Hironaka's proof work become more lucid, especially for non-experts. In addition, the logical argument could in principle also work in the still unsolved case of positive characteristic.

As a matter of historical correctness, it should be mentioned that
the idea of formulating games related to the resolution problem 
is due to Hironaka himself \cite{Hironaka:67}: he introduced
two such games, the simple and the hard polyhedral game.
The first models resolution for hypersurfaces with generic coefficients,
the second local uniformization of arbitrary hypersurfaces.
M. Spivakovsky gave a winning strategy for the first game 
\cite{Spivakovsky:83} and a counter-example 
for the second, indicating a game that cannot be won \cite{Spivakovsky:82}. This has been extended by S. Bloch and M. Levine in order to bring morphisms of schemes of finite type in good position\cite{Bloch:94, Levine:01}. A recent account on games and resolution has been given in \cite{Zeillinger:06}. 

\ 

The paper consists of four parts:  The first section introduces the game {\em Salmagundy}. Its main ingredient are {\em Scenarios}, which are certain labelled graphs. No algebra appears. The second section introduces the concept of {\em Gallimaufry}, which is our version of a resolution datum. This part uses only basic algebra. The third section establishes the transcription between Gallimaufries and Scenarios: it is shown how to view the resolution process of a Gallimaufry inside the game as an evolution of a Scenario, and, conversely, how any winning strategy for the game implies the existence of a resolution algorithm for singular varieties in characteristic zero. The last section finally shows that the game has a winning strategy, thus completing the proof of resolution.\goodbreak

%-------------------------------------------------
%            COMPARISON OF PROOFS
%-------------------------------------------------

\section*{Comparison of existing proofs}

In the literature there appear at least nine proofs for the resolution of singularities of varieties of arbitrary dimension defined over fields of characteristic zero. In this section we will sketch some of the differences between these proofs.

The original proof of Hironaka is some 200 pages long 
\cite{Hironaka:64}. 
It introduces the key ideas and techniques for all subsequent proofs. As such it has always worked as the principal source of inspiration for mathematicians working in the field. The article is the first paper treating systematically varieties of arbitrary codimension (not just hypersurfaces), it establishes the principle of descent in dimension via local hypersurfaces of maximal contact and coefficient ideals, considers embedded resolution (i.e., aims at normal crossings for the total transform), and develops a multiple, interwoven induction argument between various resolution statements, loc.÷ cit. Note that Hironaka's proof is existential.

In a collaboration with J.M.÷ Aroca and J.L.÷ Vicente, Hironaka then adapted the arguments to the resolution of complex analytic varieties, introducing on the way further techniques 
\cite{Aroca_Hironaka_Vicente:75, Aroca_Hironaka_Vicente:77}. 
The use of the Hilbert-Samuel function as an invariant was made available by B.÷ Bennett 
\cite{Bennett:70}
and then used via normal flatness in several later papers of Hironaka and other authors. In 1989 and 1992, O.÷ Villamayor published two papers which provided a constructive proof of resolution (i.e., indicating the centers of blowups) and added equivariance as a natural further requirement 
\cite{Villamayor:89, Villamayor:92}. 
These two papers were still very complicated and are hard to read, but the main technical advances already appear there (see the appendix in 
\cite{Encinas_Hauser:02} 
for precise references). About the same time, E.÷ Bierstone and P.÷ Milman started to present their approach to resolution. Whereas the first papers 
\cite{Bierstone_Milman:90,Bierstone_Milman:91} 
developed the general ideas and concepts, the article 
\cite{Bierstone_Milman:97} 
offers a complete and thorough presentation with all technical details. It is still complicated to read, in part due to the consideration of certain equivalence relations for ideals in order to construct  global objects from local data. For an extensive comparison of the papers of Villamayor and Bierstone-Milman, see their featured Math.÷ Reviews by J.÷ Lipman, respectively H.÷ Hauser. 

The Working Week on Resolution of Singularities in Obergurgl 1997 enforced the renewed interest and activity in the field. Soon after, G.÷ Bodn\'ar and J.÷ Schicho came up with an implementation of Villamayor's algorithm in Maple 
\cite{Bodnar_Schicho:99f}, 
S.÷ Encinas and O.÷ Villamayor succeeded to clarify further the algorithm 
\cite{Encinas_Villamayor:97b}, 
and S.÷ Encinas and H.÷ Hauser gave a very succint proof (just 20 pages) relying on the language of mobiles 
\cite{Encinas_Hauser:02}. 
There, for the first time, a global resolution datum was constructed without using equivalence relations. This was appropriate to define in an intrinsic way the local resolution invariant for the induction. Even though the definition of a mobile and of the invariant is somewhat involved, the advantage is convincing since the resulting proofs become very short and almost automatic. Moreover, the paper clearly distinguishes the places where the characteristic zero assumption enters the scene. 

Next, D.÷ Cutkosky published a book on resolution of singularities, taking up Villamayor's approach 
\cite{Cutkosky:04}. 
At about the same time, J.÷ W\l odarczyk proposed a variation of the descent in dimension, using homogenized coefficient ideals
\cite{Wlodarczyk:05}. 
Up to analytic isomorphism, they are independent of the local choice of a hypersurface of maximal contact. This allowed to show by different methods than in 
\cite{Villamayor:92, Bierstone_Milman:97, Encinas_Hauser:02} 
that the order of the coefficient ideal was well defined. J.÷ Koll\'ar profited of this construction to eliminate the use of a resolution invariant from the proof he presented in his book 
\cite{Kollar:07} 
(nevertheless, it is used implicitly). In 2003, A.÷ Fr\"uhbis-Kr\"uger and G.÷ Pfister published a refined version of the implementation of Bodn\'ar and Schicho in Singular 
\cite{Fruehbis-Krueger_Pfister:03}.

Recently, O.÷ Villamayor and his collaborators A.÷ Bravo, A.÷ Benito and S.÷ Encinas have developed a new descent in dimension, following Jung's method 
\cite{Bravo_Villamayor:10, Benito_Villamayor:10, Encinas_Villamayor:06}. 
They replace restrictions by projections. The use of elimination algebras and differential operators provides a particularly elegant argument in characteristic zero, and opens some options for positive characteristic, as is shown in the resulting proof of resolution for surfaces \cite{Benito_Villamayor:10}.  On the other side, E.÷ Bierstone, P.÷ Milman, and M.÷ Temkin extended resolution to a quite general setting with a strong focus on functorial properties \cite{Bierstone_Milman_Temkin:09}.

All this activity has been complemented in the last years by various proposals and attacks in characteristic $p>0$ which will not be commented here. We refer the interested reader to \cite{Hauser:10}.\\

Let us briefly describe how the present paper embeds into this landscape: From the reasoning in 
\cite{Encinas_Hauser:02} 
it became clear that the logical part of the argument lives somewhat separated from the algebraic part. Said differently, the actual algebraic construction of the descent in dimension (via hypersurfaces of maximal contact and coefficient ideals) as well as the construction of the transversality ideal (to ensure that the chosen center is always transversal to the exceptional divisor) and the companion ideal (to ensure that the singular locus of the coefficient ideal is contained in the singular loucs of the original ideal) did not matter so much as long as these objects satisfied some specific relations between them. It was then a natural step to isolate these properties and to formulate our game in a purely combinatorial manner (even though working out the technical details is kind of intricate). The game shows perfectly the logical structure of Hironaka's proof, which is -- at least to us -- of dazzling beauty. 

Of course, the game becomes only valid if it can be shown that the actual resolution process for a singularity is mimiqued by a winning strategy for it. This goes in two directions: First, one has to translate the algebraic situation to the context of the game, and second, the winning strategy of the game has to be translated back to prove that the resolution process terminates. This is done in the section {\em Transcription}. In principle, also other algebraic constructions or formulations of resolution problems may fit into the game, even in positive characteristic.  

Let us make clear that we do not overestimate the impact or importance of the present paper. After all, it is just another reading of the existing proofs for resolution (of which we have taken up freely many ideas and concepts). But as the combinatorial and algebraic part can be accessed easily even by non-experts, the paper may help to understand better the existing proofs of resolution of singularities.

\

{\it Acknowledgements.} The present paper owes a lot to prior work on the subject, especially of Zariski, Abhyankar, Hironaka, Villamayor, Bierstone-Milman, Cossart, Cutkosky, Encinas, W\l odarczyk, Koll\'ar, Kawanoue-Matsuki, Bodn\'ar, Panazzolo and Fr\"uhbis-Kr\"uger. We are indebted to all these sources and apologize in advance that precise credits could only be provided in a limited number of cases.

%-------------------------------------------------
%              SALMAGUNDY
%-------------------------------------------------

\setcounter{section}{0}

\section{Salmagundy} \label{sec:rules}

In this section we will introduce a combinatorial game, called {\em Salmagundy}.\footnote{{\it Salmagundy}: A 17th \& 18th century composed salad of cold chicken with anchovies, boiled eggs, green beans, boiled onions, grapes, and dressed with a vinaigrette (American Heritage Dictionary). From fr.÷ {\em Salmigondis}: Assemblage disparate, m\'elange confus de choses ou de personnes. Ramassis d'id\'ees, de paroles ou d'\'ecrits formant un tout disparate et incoh\'erent (www.absurditis.com/salmigondis/708).} 
It exhibits the axiomatic and logical structure of the existing proofs for the resolution of singularities of algebraic varieties in characteristic zero. The resolution is typically built on a sequence of blowups in smooth centers which are chosen as the smallest stratum of a suitable stratification of the variety. The choice of the stratification and the proof of termination of the resolution procedure are both established by induction on the ambient dimension. The main focus lies here on the scrutiny of the ideal defining the variety in the ambient space, together with its transforms under blowup.

In the terminology of the game, there will be no references to algebraic concepts such as ideals, varieties or blowup maps. Only labelled graphs -- which evolve along the game -- appear. We will indicate in paralipomena the respective algebraic analogues of the various tokens of the game.

In the section {\em Transcription} we show how to pass from the algebraic setting of a resolution problem for singular varieties -- encoded in a resolution datum called {\em Gallimaufry} -- to the game Salmagundy and how, going back, a winning strategy for the game ensures  the termination of the resolution algorithm of the variety.

Let us first sketch the overall idea of the game. The precise description will start with the subsection {\em Boards}.\footnote{As a general guideline, we try to keep the exposition slim so as to transmit the essential flavor of the various constructions instead of hiding them behind tedious technicalities. It is therefore preferable to accept a rough understanding on a first reading.}

Our game carries on a collection of \emph {scenarios} $C$ -- finite, 
directed graphs whose nodes come with certain labels -- which evolve 
with the moves of the two players, Dido and Mephisto. A round of the game consists 
in a move of Dido, followed by a move of Mephisto. The combination of both moves modifies the actual scenarios, introduces new or deletes existing ones. The rules of the game and the moves of the players in one round 
determine the collection of scenarios for the next round. 

The game starts with a single scenario, the \emph{initial} scenario $C$. 
It is provided by the Umpire. After inspection of the scenario, Dido 
announces her move, to which Mephisto responds with his move. This 
constitutes the first round. There are two types of moves for Dido, {\it blowups} and {\em quests}.

%-------------------------------------------------

\paragraph{Blowups.} A blowup move of Dido is given by the selection of a certain node of the graph, the {\em center}. The {\em response move} of Mephisto consists in modifying all existing scenarios.  This response depends on the chosen center, but allows Mephisto some flexibility on how to change the scenarios.  Each round with blowup moves produces a modification $C'$ of all scenarios $C$, called the {\em blowup transform} of $C$. These transforms are then the {\em actual} scenarios of the game in round two. If the next move of Dido is again a blowup, they will be modi\-fied once more. The evolution of a scenario $C$ continues like this either forever or until it reaches a \emph{final} configuration, which is characterized by certain properties. In this case, Dido has won. If the game does not come to an end, neither player has won (so that Mephisto can never win  -- he can only prevent Dido from winning).

Final scenarios can be characterized by the absence of singularities.
In order to survive, Mephisto needs to claim that there are still
singularities, no matter what the other player is doing. According to \cite{Goethe}, ``Mephisto ist der Geist, der stets verneint'':

\begin{quote}
Der Herr: Hast du mir weiter nichts zu sagen? \\
Kommst du nur immer anzuklagen? \\
Ist auf der Erde ewig dir nichts recht? \\
Mephisto:
Nein, Herr! Ich find' es dort, wie immer, herzlich schlecht.
\end{quote}

%-------------------------------------------------

\paragraph{Quests.} 

The second type of Dido's moves -- {\em calling} or {\em opening a quest} -- results in the creation of new scenarios.  A call can only be placed by Dido, and Mephisto's response to it is a scenario of a new, subordinate {\em quest}. A quest can be intuitively thought of as a subgame of the main game.\footnote{For instance, taking a pawn may be considered as a subgame in chess, similarly as winning a set in lawn tennis.} In our context, a quest is abstractly defined as the -- possibly infinite -- tree of all scenarios that can be obtained from one given scenario by blowup moves of the players.\footnote{See the respective subsection {\em Transforms} below.} Two nodes of the tree are connected 
by a directed edge if the scenario corresponding to the second node is an 
allowed blowup transform of the scenario of the first (see fig.÷ 1).

%-------------------------------------------------
%      PICTURE QUEST
%-------------------------------------------------

\begin{figure}
\begin{center}
\includegraphics[width= 0.85\textwidth ]{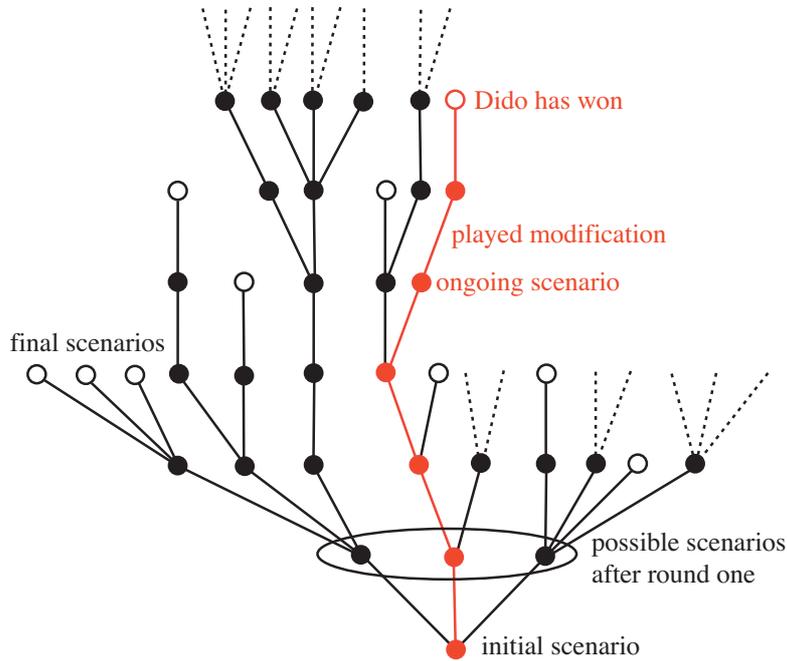}
\caption{ \label{fig:quest} A quest is a tree of scenarios related by blowup moves.}
\end{center}\vskip-.4cm
\end{figure}

%-------------------------------------------------

Mephisto's move, which is considered as a response to Dido's call, consists in 
providing a new scenario for the quest just opened by Dido, as well as to all quests opened at earlier stages of the game. The response scenarios must satisfy certain rules according to the type of the call. In addition, they are subject to certain commutativity rules with respect to the scenarios which existed before the last blowup move of the foregoing rounds.  

The responses of Mephisto are in general not unique, with the exception of a few auxiliary quests, called {\em one way quests}, for which Mephisto has no choice and for which the response scenario is already completely determined by the type of the call and the scenario of the superordinate quest.\footnote{The responses to one way quests can equally be provided by the Umpire.} 

If Dido chooses the first type of move, \emph{blowups}, the actual 
scenarios -- each one being the previous response of Mephisto to a quest -- are split 
into two sets: the first set consists of the scenarios for which the blowup 
is \emph{admissible} in a specified sense. They undergo transformations chosen by Mephisto within certain rules. The remaining scenarios do not transform; their quests are deleted from the game.

It may happen that a subordinate quest reaches earlier a final configuration 
than a superordinate one, in which case the resolved quest is discarded 
from the game. We say that Dido has won the subgame. The rules of the game 
ensure that  the superordinate scenario has then come closer 
(in a precise way) to its final configuration, so winning a subgame 
helps Dido to win the overall game.

The collection of all quests which are at stake before/after a move is called
the set of open quests. Mephisto has to provide responses, say scenarios,
for all open quests. These scenarios are related to each other by precise rules. They share a common underlying structure, the {\em board}, which is a labelled graph. In contrast to other board games, the board in the resolution game will change in each
move of Mephisto.

From a different perspective, opening a quest can also be seen as an 
operational instruction: it is a move of Dido which forces Mephisto 
to respond by scenarios of a given type as long as the quest is open. 
These responses may provide useful information for Dido on how to select 
her next move. 

At each stage of the game, the players have some flexibility of how to choose 
their moves (there are different rules for each of them). The combination of 
Dido's and Mephisto's move defines in a unique way the modification of the 
actual scenarios, the creation of the new scenarios and the deletion of 
certain scenarios (the game is deterministic, chance is excluded). 

Note the difference between the overall game (i.e., the prescription of the 
initial scenario and the collection of rules for transformations and calls) 
and the actually played game (i.e., the sequence of moves applied to the 
initial scenario). We start with a couple of basic ingredients of the game.

%-------------------------------------------------

\paragraph{Boards.}

Mephisto's move in each round consists in responding to all open quests by scenarios. 
These responses share the same underlying structure, the {\em board}.
This is a directed annotated graph  $\Gamma$, which is the Hasse 
diagram of a finite partially ordered set. We say that a node $s$ is {\em below} a node $t$ if $s\leq t$ with respect to the order relation on $\Gamma$; and that $s$ and $t$ are {\em remote} from each other if there is no common node $u$ below both of them.\footnote{Seen geometrically, the nodes $s$ of $\Gamma$ correspond to the strata of a stratification of a manifold $W$ by locally closed subsets. The order relation $s\leq t$ between nodes corresponds to the adjacency of strata, defined by $s$ being in the closure of $t$. Two nodes have a common node below them if and only if the closures of the respective strata intersect.} 
The annotation consists of a non-negative integer $\dim(s)$, the {\em dimension}, for each node $s\in\Gamma$.  It defines a strictly monotonously increasing function on $\Gamma$. The nodes of $\Gamma$ are supposed to have a largest element; its dimension is denoted by $n=\dim(\Gamma)$.\footnote{The number $n$ corresponds to the dimension of the smooth ambient variety of the singularity to be resolved.}

%-------------------------------------------------

\paragraph{Scenarios.} 
A {\em scenario} $C$ on a board $\Gamma$ of dimension $n$ consists of 

\begin{enumerate}
\item 
        two integers $0\le d\le n$ and $0<B$,  the {\em dimension} and the {\em bound};
\item
        subsets ${\cal H}$, ${\cal S}$ and ${\cal T}$ of the set of nodes of $\Gamma$, 
        called the {\em handicap}, the {\em singular set}, and the {\em transversal set};
\item
        a function $\ord:{\cal S}\to\frac{1}{B}{\Bbb Z}\cup\{\infty\}$, the {\em order 
        function};
\item
        a non-empty set $\cal M$ of functions $m:{\cal H}\to\Q_{\ge 0}$, the 
        {\em \factors}. They are extended to $\Gamma$ by 
        $m(s)=\sum_{h \ge s}m(h)$. 
\end{enumerate}

The elements in ${\cal H}$ are called {\em jibs}, those in ${\cal S}$ and ${\cal T}$ {\em singular}, respectively {\em transversal nodes}.\footnote{In the transcription of algebraic resolution problems to the game, the singular set and the order function will come from a gallimaufry. The jibs correspond to the exceptional hypersurfaces, and ${\cal T}$ describes choices of centers which are transversal to these hypersurfaces. The \factors{} stem from exceptional factors which were created by earlier blowups.} See Figure 2 for an example of a scenario with just one monomial factor.\footnote{The dimensions are not indicated in the figure; nodes on the same height will have the same dimension.} 

\goodbreak
%-------------------------------------------------
%          PICTURE SCENARIO 
%-------------------------------------------------

\begin{figure}
\begin{center}
\includegraphics[width= 0.85\textwidth ]{scenario.eps}
\caption{ \label{fig:quest} Scenario with order function on $\cal S$ and single \factor{} $m$.}
\end{center}\vskip-.4cm
\end{figure}

%-------------------------------------------------
\goodbreak

\begin{rul} \label{rul:scen} A scenario is subject to the following rules.
\begin{enumerate}
\item Every jib $h\in{\cal H}$ has dimension $n-1$.
\item The set ${\cal S}$ is downward closed.
%\item The order function is weakly monotonously decreasing.
\item \label{srul:max}
        The maximal nodes with order $\infty$ have dimension $d$.
	If ${\cal H}$ is empty, then all these maximal nodes 
	are in ${\cal T}$.
\item \label{srul:out}
	For all $s\in{\cal S}$, we have $\dim(s)\le d$.
	If $\dim(s)=d$, then $\ord(s)=\infty$. 
\item \label{srul:istr}
	If $K\subset{\cal H}$, $s\in{\cal S}$, and $s\le h$ 
	for all $h\in K$, then $\dim(s)\le d-\mathrm{card}(K)$.
	If $\dim(s)=d-\mathrm{card}(K)$, then $s\in{\cal T}$.
\item \label{srul:fac}
        $\ord(s)\ge m(s)$ for all $s\in{\cal S}$ and $m\in {\cal M}$.
\item \label{srul:down}
        ${\cal M}$ is downward closed with respect to $\leq$.\footnote{ 
        If $m_1(h)\leq m(h)$ for all $h\in{\cal H}$ and some $m\in {\cal M}$, then 
        $m_1\in {\cal M}$.} 
\item \label{srul:mono}        
        $\ord(s) - m(s)$ is weakly monotonously decreasing on $\cal S$, for all 
        $m\in {\cal M}$.\footnote{Note that if $\ord(s) - m(s)$ is weakly 
        monotonously decreasing for some $m\in {\cal M}$, then $\ord(s) - m_1(s)$ is 
        automatically weakly monotonously decreasing for any $m_1\leq m$; indeed, 
        $\ord(s) - m_1(s) = \ord(s) - m(s) +(m(s) - m_1(s))$ and $m(s) - m_1(s)$ is given 
        by its values on $h\in{\cal H}$.}
\item \label{srul:new}
        If ${\cal K}$ is a set of jibs such that $\sum_{h\in {\cal K}}m(h)\ge 1$ for some 
        $m\in{\cal M}$, then, for any node $s$ in ${\cal S}$ that lies below all nodes in 
        ${\cal K}$, there exists exactly one node $t\geq s$ in ${\cal S}$ which lies below all 
        nodes in ${\cal K}$ of dimension $d-\mathrm{card}({\cal K})$.\footnote{Note that 
        any node in this set has dimension $\le d-\mathrm{card}({\cal K})$ by Rule~\ref{rul:scen}, Issue~\ref{srul:istr}.}
\end{enumerate}
\end{rul}

A scenario  is {\em tight} if its order function is constant equal to~1 on whole $\cal S$. It is {\em resolved} if $\cal S$ is empty. A \factor{} $m$ is called {\em complete} if $\ord(s)=m(s)$ for all $s\in{\cal S}$. We then also say that $C$ is a {\em monomial} scenario.

%-------------------------------------------------

\paragraph{Transform of Boards.}

Mephisto may change the board by two types of modifications,
called {\em refinement} and {\em blowup}. In both cases, he has to give 
a new board $\Gamma'$ with new dimension labels.
On the level of nodes, Mephisto provides an embedding $i:\Gamma\to\Gamma'$, 
and a retract $u:\Gamma'\to\Gamma$, such that 
$u\circ i = \mathrm{id}_\Gamma$.\footnote{The map $i$ sends a stratum to a dense 
open subset, the inverse image $u^{-1}(s)$ consists of strata whose 
union is $s$. A refinement corresponds to a refinement of a stratification in the 
classical sense (strata are replaced by unions of strata), a blowup 
corresponds to a stratification such that all inverse images of strata
are unions of strata.}
The following rule must be fulfilled.

\begin{rul} \label{rul:trans}
The joint rules for both operations (refinement and blowup) are:
\begin{enumerate}
\item $i(s)$ is the unique maximal element 
	of $u^{-1}(s)$, for any $s\in\Gamma$.
\item $i(s)<i(t)$ if and only if $s<t$, for any $s,t\in\Gamma$.
\item $u$ is weakly monotonously increasing.
\end{enumerate}
Here is the additional rule for refinements.
\begin{enumerate} \addtocounter{enumi}{3}
\item $\dim(i(s))=\dim(s)$ for $s\in\Gamma$.
\end{enumerate}\goodbreak
Here are the additional rules for blowups.
\begin{enumerate} \addtocounter{enumi}{4}
\item Every blowup has a unique center $\cc\in\Gamma$, specified
	by Dido.
\item \label{srul:exdim}
	If $s\in\Gamma$ does not lie below $\cc$, then $\dim(i(s))=\dim(s)$.
\item \label{srul:indim}
	If $s\le \cc$, then $\dim(i(s))=\dim(s)+\dim(\Gamma)-1-\dim(\cc)$. \end{enumerate}
\end{rul}

%-------------------------------------------------

\paragraph{Transform of Scenarios.} 

Scenarios evolve under the moves of the players. If some quest remains open after a round of the game, then a scenario $C$ and its successor $C'$ -- chosen by Mephisto on a board $\Gamma$, respectively its refinement or blowup board $\Gamma'$ -- are not independent, but have to fulfill the following transformation rules.\footnote{We may think of two consecutive scenarios of a blowup move as being vertically related. There is also a horizontal relationship between scenarios on the same board; it is evoked by Dido when she issues a call and thus creates a subordinate quest to which Mephisto responds by a scenario. The horizontal relation lasts until one of the two quests becomes invalid (either because Mephisto loses it or because Dido gives it up). Deliberate divorce between horizontally related quests is not allowed in the game.} 

\begin{rul} \label{rul:refblow}
Let the board $\Gamma'$ be obtained from the board $\Gamma$ by a refinement or a blowup, with an embedding $i:\Gamma\to\Gamma'$ and a retract 
$u:\Gamma'\to\Gamma$. The rules for both, refinements and blowups of scenarios, are:

\begin{enumerate}
\item The dimension $d$ and the bound $B$ remain unchanged.
\item \label{srul:adm}
	For any $s\in\Gamma$, we have $i(s)\in{\cal T}'$ if and only if
	$s\in{\cal T}$.\footnote{There may be new transversal nodes in $\cal T'$ which are
        not in the image of $i$.}
\end{enumerate}
Here are the additional rules for refinements.
\begin{enumerate} \addtocounter{enumi}{2}
\item ${\cal S}'=u^{-1}({\cal S})$.
\item \label{srul:same}
	$\ord(s')=\ord(u(s'))$ for any $s'\in{\cal S}'$.
\item ${\cal H}'=i({\cal H})$. 
\item $m'\in{\cal M}$ if $m'(h')=m(u(h')$ for some $m\in{\cal M}$ and all $h'\in{\cal H'}$.
\end{enumerate}

Here are the additional rules for blowups. Let $\cc\in \Gamma$ be the center of the blowup, and $e:=i(\cc)$.\footnote {The node $e$ has dimension $n-1$, by Rule~\ref{rul:trans}, Issue~\ref{srul:indim}.}

\begin{enumerate} \addtocounter{enumi}{6}
\item \label{srul:center}
	$\cc\in{\cal T}$, and either $s\in{\cal S}$ or $s$ is remote from ${\cal S}$.
	Any node which satisfies these two conditions is called {\em admissible}.
\item ${\cal S}'\subseteq u^{-1}({\cal S})$.
\item If $\ord(\cc)<2$ or $\cc\not\in{\cal S}$, then $e\not\in{\cal S}'$. 
	Otherwise, $\ord(e)=\ord(\cc)-1$. 
\item For any $s'\in{\cal S}'$ which is not $\le e$, 
	we have $\ord(s')=\ord(u(s'))$.
\item \label{srul:tight}
	If $C$ is tight, then so is $C'$.
\item \label{srul:hand}
        ${\cal H}'=i({\cal H})\cup\{e\}$.
\item \label{srul:btr}
	If $K\subseteq{\cal H}$ with $\cc\le h$ for all $h\in K$ and
        $\dim(\cc)=d-\mathrm{card}(K)$, then there exists
        no node $s'$ in ${\cal S}'$ such that $s'\le i(h)$ for all $h\in K$. 
\item \label{srul:fact}
        $m'\in{\cal M'}$ if $m'(e)\leq \ord(\cc)-1$, provided $\cc\in{\cal S}$; otherwise
	$m'(e)=0$ -- 
	and there exists an $m\in {\cal M}$ so that
        $m'(i(h))= m(h)$ for all $h \in{\cal H}$.\footnote{The 
        transformation rule for $m$ corresponds to the way the exceptional factor of an ideal 
        transforms under blowup, cf. the combinatorial handicap in 
        \cite{Encinas_Hauser:02}.} 
\item \label{srul:complete} 
        If $m$ is a complete \factor{} of $C$, then $m'$ defined by $m'(e)= \ord(\cc)-1$ and  
        $m'(i(h))= m(h)$ for $h \in{\cal H}$ is a complete \factor{} of $C'$.\footnote{So 
        Mephisto has in this case no choice of how to choose $C'$.} 
\end{enumerate}
\end{rul}

We call $C'$ a {\em transform} of $C$ under a refinement, respectively blowup move. It is in general not unique, so that Mephisto has some freedom of how to choose its items.\footnote{But as $C'$ has to be again a scenario, Mephisto has to choose for a blowup transform of a scenario the order function so that 
Rule~\ref{rul:scen}, Issues~\ref{srul:fac} and \ref{srul:mono}, is satisfied. Observe that Mephisto has no choice of how to choose $\cal H'$ and $\cal M'$.}

%-------------------------------------------------

\paragraph{Quests.}

A {\em quest} is the collection of all scenarios which can be obtained from an initial scenario by blowup moves. It is thus a tree, where the directed edges connect scenarios which are related by a blowup. According to Dido's flexibility in choosing the center of the blowup, and Mephisto's flexibility in providing transformed scenarios, the tree may ramify considerably. However, playing a quest (i.e., applying concrete blowup moves) yields a sequence of transforms of the initial scenario which corresponds to a specific path in this tree. If a transform of a scenario reaches a final shape, the quest is won and discarded from the game (provided that it is not the main quest).

The overall game is constituted by several and interrelated quests, created at different moments (together with their initial scenario) and with possibly different life times. We say that a quest is {\em open} as long as it forms part of the game. When it is {\em closed}, it will be discarded from the game. The game starts with a single quest, the main quest, and an initial scenario thereof, which is provided by the Umpire. In the course of the game, other quests may open and close. The game ends when the main quest is won.

%-------------------------------------------------

\paragraph{Responses.}

Whenever a quest is open, Mephisto has to respond to it by a scenario when it is his turn to move. The scenario has to fulfill certain properties according to the type of the quest and the stage of the game. These are specified in the description below of the various quests and in the commutativity rules for blowup moves.

%-------------------------------------------------

\paragraph{Calls.} 

Aside of playing a blowup, Dido may also open at any time a new quest -- we also say: she {\em places a call}. This move introduces a new quest, considered as being {\em subordinate} to a quest specified by Dido; it will be part of the game until it is closed or deleted. Its scenarios are related to the scenarios of the superordinate quest by certain rules depending on the type of the call.  

The calls produce two types of quests, with different objectives: the first type are the relaxation and descent quest (for which Mephisto has some freedom on how to respond to them by scenarios), the second type are {\em one way} quests (for which the response scenario is uniquely determined by the superordinate quest). The latter calls are thus just commands without choice; they are needed for Dido in order to be able to add and factorize scenarios.  It is irrelevant whether Mephisto or the Umpire provides the corresponding response. The one way quests are the transversality and the \factorization{} quest.

Under a blowup move, a quest and its subordinate quest -- created by a call prior to the blowup -- preserve their relation defined by the type of the call (i.e., the respective scenarios have to obey the corresponding rules), see the subsection {\em Commutativity Relations.}\footnote{But it may happen that the subordinate quest closes under the blowup move and is thus discarded from the game.}

%-------------------------------------------------

\paragraph{Relaxation quest.}
Let $\mathfrak{Q}$ be a quest, and assume that ${\cal J}\subseteq{\cal H}$
is a chosen set of jibs of a scenario $C$ of $\mathfrak{Q}$ with board $\Gamma$. Then Dido may issue the call: ``Release ${\cal J}$!'' This creates a {\em relaxation quest}.

\begin{rul} \label{rul:relax}
A scenario $C_1$ on $\Gamma$ is a response scenario for the relaxation quest that releases ${\cal J}$ if the following rules hold.
\begin{enumerate}
\item
        The dimension $d$ and the bound $B$ are the same for $C$ and $C_1$.
\item
        ${\cal S}_1={\cal S}$ and $\ord_1 =\ord$.
\item
        ${\cal H}_1={\cal H}\setminus {\cal J}$.
\item	\label{srul:relax,T}	
	${\cal T}\subseteq{\cal T}_1$. 
	If $z\le h$ for all $h\in{\cal J}$, and $z\in{\cal T}_1$, then $z\in{\cal T}$.
	If $z$ is remote from ${\cal J}$, and $z\in{\cal T}_1$, 
	then $z\in{\cal T}$.
\item
        ${\cal M}_1$ consists of the restrictions to ${\cal H}_1$ of the \factors{}
        $m\in{\cal M}$.
\end{enumerate}
\end{rul}

%-------------------------------------------------

\paragraph{Descent quest.}

Let $\mathfrak{Q}$ be a quest, and assume that $C$ is a tight scenario of
$\mathfrak{Q}$. By Rule~\ref{rul:refblow}, Issues~\ref{srul:same} and \ref{srul:tight}, 
the future responses to $\mathfrak{Q}$ will again be tight,
so we may call the quest itself tight.  The tightness property of
a quest may be acquired during the game by some blowup, but once a quest
is tight it will stay so as long as it is open.

Assume, additionally, that the handicap $\cal H$ is empty.
Then Dido may issue the call: ``Step down!'' 
This creates a {\em descent quest}.\footnote{In the geometric situation of varieties, 
a descent in dimension to a hypersurface of maximal contact may create a transversality 
problem with the exceptional divisor. If the hypersurface is not transversal, 
a subordinate resolution problem in smaller ambient dimension is formulated 
in order to separate the hypersurface from the exceptional components, see the notion 
of transversality ideal in \cite{Encinas_Hauser:02}. In the game, we handle 
this difficulty by only allowing a descent call for scenarios with empty handicap. 
After blowup, this assumption is no longer required, as the new jibs of the transformed scenario 
will automatically be transversal, by Rule~\ref{rul:refblow}, Issues~\ref{srul:adm} 
and \ref{srul:hand}. }  
The response scenarios are subject to the following rules. 

\begin{rul} \label{rul:down}
Let $C$ be a tight scenario for $\mathfrak{Q}$ on a board $\Gamma$. A scenario $C_1$ on a refinement $\Gamma'$ of $\Gamma$ is a response scenario for the descent quest if the following rules hold.
\begin{enumerate}
\item $d_1=d-1$. 
\item ${\cal S}_1={\cal S}$, ${\cal H}_1={\cal H}$, and 
        ${\cal T}_1={\cal T}$.\footnote{Recall that only at the moment of the call, the
         actual scenario $C$ of $\mathfrak{Q}$ is required to have empty handicap.}
\end{enumerate}
\end{rul}

%-------------------------------------------------

\paragraph{One way quests.}

The next two quests are one way quests, the response scenario provided by Mephisto -- or the Umpire -- is always uniquely determined. It only depends on the superordinate scenario and the type of the quest. 

%-------------------------------------------------

\paragraph{Transversality quest.}

Let $\mathfrak{Q}$ be a quest, with scenario $C$ on a board $\Gamma$, 
and let ${\cal K}\subseteq {\cal H}$ be a set of jibs. 
Then Dido may construct the {\em transversality quest} $\mathfrak{Q}_1$ with respect to ${\cal K}$.\footnote{In contrast to \cite{Encinas_Hauser:02} where products of ideals are taken to deal with the transversality problem, we axiomatize here a transversality ideal which is given by the restriction of the singular ideal to the intersection of certain exceptional hypersurfaces. This corresponds to the sum of ideals, respectively gallimaufries.} Its response scenario is given on $\Gamma$ by 

\begin{rul} \label{rul:jib}
\begin{enumerate}
\item $d_1=d$, $B_1=B$.
\item ${\cal H}_1={\cal H}$.
\item ${\cal S}_1=\bigcap_{h\in {\cal K}}\ \{s\in {\cal S}\mid s\le h\}$.\footnote{In case that ${\cal K}=\emptyset$, this signifies that ${\cal S}_1={\cal S}$.}
\item $\ord_1(s)=1$ for all $s\in{\cal S}_1$, except for ${\cal K}=\emptyset$ where $\ord_1=\ord$.
\item \label{srul:jib,T}
	${\cal T}_1={\cal T}$.
\item ${\cal M}_1=\{0\}$, except for ${\cal K}=\emptyset$ where ${\cal M}_1={\cal M}$, 
        and for ${\cal K}=\{h\}$, where ${\cal M}_1$ contains 
        the function mapping $h$ to $1$ provided that it belongs to ${\cal M}$. 
\end{enumerate}
\end{rul}

%-------------------------------------------------

\paragraph{Quotient quest.}

Let $\mathfrak{Q}$ be a quest, with scenario $C$ on a board $\Gamma$. Let  $m$ be a \factor{} of $C$ and let $q>0$ be a positive rational number, the {\em scale}. Both are chosen by Dido. The response scenario $C_1$ to the {\em quotient quest} is called the {\em $q$-quotient} of $C$ with respect to $m$.\footnote{\Factors{} and quotients axiomatize the combinatorial handicap of a mobile from \cite{Encinas_Hauser:02}; they enable Dido to split off from an ideal the exceptional monomial factor. The $q$-quotient corresponds to the remaining non-monomial factor, and the scale $q$ adjusts its control.} It is given on $\Gamma$ by 

\begin{rul} \label{rul:quot} 
\begin{enumerate}
\item $d_1=d$, $B_1\cdot \Z=\Z\cap \frac{B}{q}\cdot\Z$. 
\item ${\cal H}_1={\cal H}$. 
\item ${\cal S}_1=\{s\in{\cal S}\mid \ord(s)-m(s)\ge q \}$. 
\item \label{srul:ord}
        $\ord_1(s)=\min\ \{\ord(s),\frac{1}{q}\cdot (\ord(s)-m(s))\}$.
\item ${\cal T}_1={\cal T}$.
\item ${\cal M}_1=\{ \frac{1}{q}\cdot (f(s)-m(s)) \mid f\in{\cal M}\}$.
\end{enumerate}
\end{rul}

If $q>\ord(s)-m(s)$ for all $s\in{\cal S}$, the singular set ${\cal S}_1$ is empty, the quotient quest resolved and discarded. So Dido will preferably choose smaller values of $q$. If $q$ equals the maximal value of $\ord(s)-m(s)$ on $S$, the response scenario is tight.
 
%-------------------------------------------------

\paragraph{Commutativity Relations.}

New quests are created by calls. This establishes an asymmetric relation between the superordinate and the subordinate quest. The relation is specified by the type of the call. Along a sequence of blowups, the relation has to be maintained by the players. We specify this persistence by listing for each call the required properties between scenario and subordinate scenario.

We consider blowups in centers $\cc$ contained in the singular and transversal sets of a scenario $C$ of a quest $\mathfrak Q$, and of a scenario $C_1$ of some subordinate quest $\mathfrak Q_1$. Both scenarios are defined on the same board $\Gamma$ and may undergo a (not necessarily unique) blowup transform with the same center $\cc$. Let $e=i(\cc)$ be the exceptional node of the blowup, and let $i:\Gamma\to\Gamma'$ and $u:\Gamma'\to\Gamma$ be the associated inclusion and retract, see Rules~\ref{rul:trans} and \ref{rul:refblow}.

We denote by $C'$ a blowup transform of $C$ chosen by Mephisto, according to Rule~\ref{rul:refblow}. Let $(C')_1$ be Mephisto's response scenario to $C'$ within the quest $\mathfrak Q_1$. Then, in order to be an allowed response of Mephisto, $C'_1$ has to satisfy the following properties.

\begin{rul} \label{rul:comm}
\begin{enumerate}
\item \label{srul:relax}
        If $C_1$ is a relaxation scenario of $C$ with respect to
        ${\cal J}\subseteq{\cal H}$, then $(C')_1$ is a relaxation scenario of $C'$ with
        respect to ${\cal J'}=i({\cal J})$ {\em and} a transform
        $(C_1)'$ of $C_1$.
\item \label{srul:descent} 
        If $C_1$ is a descent scenario of $C$, then $(C')_1$ is a descent scenario of $C'$ 
        {\em and} a transform $(C_1)'$ of $C_1$.
\item \label{srul:jib}
        If $C_1$ is the transversality scenario of $C$ with respect to ${\cal K}\subseteq{\cal 
        H}$, then $(C')_1$ is the transversality scenario of $C'$ with respect to ${\cal 
        K}'=i({\cal K})$ {\em and} a transform $(C_1)'$ of $C_1$.
\item \label{srul:qtr}  
        If $C_1$ is the $q$-quotient of $C$ with respect to a \factor{} $m$, then $(C')_1$ is 
        the $q$-quotient of $C'$ with respect to $m'$ defined by $m'(e) = m(\cc)+q-1$ and  
        $m'(i(h))= m(h)$ for $h \in{\cal H}$, {\em and} $(C')_1$ is a transform $(C_1)'$ of 
        $C_1$.\footnote{Observe that $m(\cc)+q\leq 
        \ord(\cc)$ if $\cc$ belongs to the singular set ${\cal S}_1$ of $C_1$, by 
        Rule~\ref{rul:quot}, Issue~\ref{srul:ord}. If the center is not admissible for $C_1$, 
        say, if $\cc$ is not in ${\cal T}$ or $\cc$ is not in ${\cal S}_1$ and not remote from 
        ${\cal S}_1$, the quotient quest is discarded after the blowup.}
\end{enumerate}
\end{rul}

These relations are schematized by the following diagram.\footnote{The response scenarios of the call after a blowup have to be transforms of the response scenarios of the call before the blowup.} 
%-------------------------------------------------
\vskip .5cm 

\hskip 4cm call

\vskip .2cm

\hskip 3cm $C' \hskip .6cm \looparrowright \hskip .2cm (C')_1 =(C_1)'$

\vskip .4cm 

\hskip 1.7cm blowup \hskip 0.1cm $\downarrow  \hskip 2.5cm \downarrow$ \hskip 0.1cm blowup

\vskip .4cm 

\hskip 3cm $C \hskip .7cm \looparrowright \hskip 1.1 cm C_1$

\vskip .2cm

\hskip 4cm call

\vskip .5cm 
%-------------------------------------------------

Observe here that the transforms of a scenario are not uniquely prescribed; but Mephisto's responses have to be chosen so that commutativity holds.

%-------------------------------------------------

\paragraph{Playing the Game.}

The game starts with a board $\Gamma$ carrying a single scenario $C$, 
the \emph{initial} scenario. It is provided by the Umpire and represents 
the first scenario of the main quest. In the course of the game, 
this scenario will transform by blowups in a way which is 
governed by the moves of the players. The game is finished, or the main quest 
is \emph {won}, when the singular set of the transform of the initial 
scenario has become empty.  

The two protagonists Dido and Mephisto play their moves alternately. 
The moves affect and modify the actual scenarios of all open quests.

During the game, other quests may be opened by Dido. Each of them remains valid until 
it is won or given up. Dido gives up a quest 
by choosing a center that is not admissible for the scenario of the quest. In this case, there is no response scenario for the quest after the blowup,
according to Rule~\ref{rul:refblow}, Issue~\ref{srul:center}, and the quest is discarded from the game. 

A possible evolution of the game is depicted in Figure 3. The dots represent the various scenarios provided by Mephisto, blowups are drawn vertically, calls horizontally. The numbering indicates the sequence of rounds formed by Dido's move (blowup or call) and Mephisto's response (scenarios).\footnote {The arrows labelled with $5$ correspond all to a round given by a blowup move: it applies to all quests for which the center is admissible, and Mephisto's responses have to provide scenarios for all these quests. Observe that the transversality quest labelled with 6 already belongs to the next round.}

In each round of the game, all scenarios of open quests are built on the same board $\Gamma$. When a new quest is opened by Dido, Mephisto may refine in his response the underlying board. Similarly, under a blowup move of Dido, the board $\Gamma$ transforms into a board $\Gamma'$. In both cases, the new board will be the common underlying graph for the scenarios of Mephisto's responses. 

If the singular set of a scenario $C$ is empty, the respective quest 
is closed, and Mephisto has lost this quest. So assume that some quests 
are still open, in particular, the main quest. It is Dido's turn to move. She has three choices. 

\begin{enumerate}
\item She plays a blowup by nominating an admissible node $\cc$ in the actual board 
	$\Gamma$. The quests for which $\cc$ is not admissible are closed and will be 
	discarded, the others remain open.
\item She specifies one actual scenarios and calls a one way quest for 
        them. The call will be a transversality quest or a quotient quest. In case of a 
        transversality quest, a set of jibs has to be specified.
        In case of a quotient quest, the \factor{} and the scale have to be 
        specified.
\item She specifies an actual scenario and calls a relaxation or descent quest for it. 
	This opens a new quest, subordinate to the quest of the chosen scenario. The new 
	quest is then played simultaneously with the other, already open quests. 
\end{enumerate}

Now it is Mephisto's turn.  First, assume that Dido has played a blowup with node $\cc\in\Gamma$. In this case, Mephisto provides a blowup transform $\Gamma'$ of the actual board $\Gamma$, together with an embedding $i:\Gamma\to\Gamma'$ and a retract $u:\Gamma'\to\Gamma$. Then, he responds to all open quests by specifying appropriate scenarios on $\Gamma'$.  For each such quest, the response scenario $C'$ is related to the previous scenario $C$ of the quest by the the transformation laws of Rule~\ref{rul:refblow}. In addition, the responses of Mephisto need to respect the {\em Commutativity Relations} from Rule~\ref{rul:comm} above.

Second, assume that Dido has placed a one way call, say a transversality or a \factorization{} quest. In this case, Mephisto has no choice and he or the Umpire provide the uniquely determined scenario.\footnote{Recall here that the factor of an initial scenario is set equal to $0$.}

%-------------------------------------------------
%      PICTURE GAME
%-------------------------------------------------

\begin{figure}
\begin{center}
\includegraphics[width= 0.8\textwidth ] {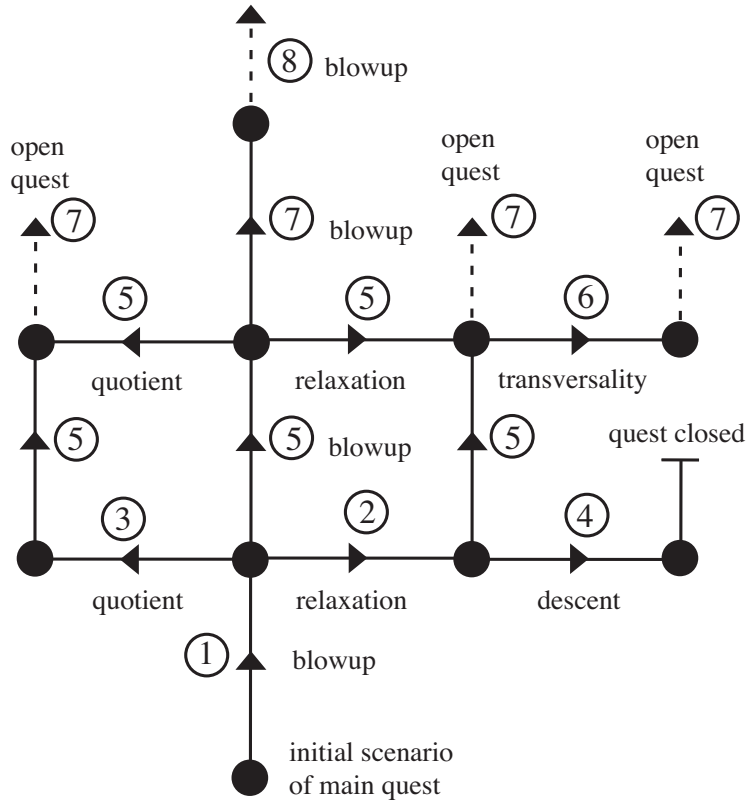}
\caption{ \label{fig:game} Playing Salmagundy.}
\end{center} \vskip -.4cm

\end{figure}
%-------------------------------------------------

Finally, assume that Dido has placed a relaxation or descent call, thus creating a new quest to which Mephisto has to respond. He provides first a refinement $\Gamma'$ of the actual board $\Gamma$, together with an embedding $i:\Gamma\to\Gamma'$ and a retract $u:\Gamma'\to\Gamma$. Then, he responds to all open quests by specifying the respective scenarios on $\Gamma'$. In his choice of scenarios, he has to respect three rules: 

\begin{enumerate}
\item The scenarios of all open quests except of the new one must be refinements of the 
        previous scenarios (i.e., those provided by Mephisto before the call) in the sense of 
        Rule~\ref{rul:trans}.\footnote{The initial scenario of the new quest has no 
        predecessor.}
\item The scenarios of the quests which were already open before the last blowup move of   
        Dido must be a transform of the respective scenario of the quest before the blowup as 
        indicated in Rule~\ref{rul:refblow}.\footnote{The initial scenario of the new quest 
        cannot be a transform. Its factor is $0$.}\goodbreak
 \item The responses of Mephisto need to respect the {\em Commutativity Relations} from 
        Rule~\ref{rul:comm}.
\end{enumerate}

The next round proceeds as before with Dido's move followed by Mephisto's 
response. The game continues like this until Mephisto loses, or forever. Dido cannot 
lose, because even if there is no admissible center of blowup to be chosen there are
infinitely many ways to open new quests. But Dido wants more: she wants to 
win. Her goal is to choose her moves so that, regardless of the responses of Mephisto, 
the resulting path of scenarios in the tree of the main quest leads from the initial scenario to a final scenario. Along the way, she may (and will) 
win subordinate quests which she has opened by her calls.\footnote{According to ancient mythology, Dido was a fugitive who arrived with nothing in the country of king Jarbas.
After asking for only so much land she could fit into the skin of a cow,
she cut the skin into small stripes and spanned a large area which later
became the mighty city of Carthage. So Dido is a clever opponent, 
dangerous to Mephisto.}

%-------------------------------------------------
%            GALLIMAUFRY
%-------------------------------------------------

\section{Gallimaufry}

In his proof for the resolution of singularities over fields of characteristic zero, Hironaka used a multiple induction between various resolution statements carrying on different types of resolution data \cite{Hironaka:64}, chap.÷ I, sec.÷ 2. Similarly, Abhyankar introduced several types of resolution data for proving resolution in small dimensions and positive characteristic \cite{Abhyankar:66}, see also the last section of Lipman's survey article \cite{Lipman:75}. Later on, many variants of resolution data have been proposed and used in the literature \cite{Villamayor:89, Villamayor:92, Bierstone_Milman:97, Encinas_Hauser:02, Wlodarczyk:05, Kawanoue:07, Kawanoue_Matsuki:08}. They all share the necessity of book-keeping the configuration of the exceptional divisors as well as certain numerical data like orders and Hilbert-Samuel functions of ideals.\footnote{Most of the notions in this section have been proposed, often in slightly different manner, by Hironaka and/or Villamayor. Our terminology does not intend to overrule their notions. But due to the various differences, a systematic new terminology was preferable.}

In this section we introduce still another type of resolution datum, called \emph{Gallimaufry}.\footnote{{\em Gallimaufry}:  A jumble, a hodgepodge.
{\em Galimafr\'ee}: From Old French {\em calimafree}, sauce made of mustard, ginger, and vinegar (Douglas Harper, Online Etymology Dictionary). A motley assortment of things (Thesaurus).} The structure of gallimaufries allows an induction in dimension that does not depend on any local choice of hypersurfaces. The global definition then permits a significant simplification of the induction argument.\footnote{Originally, local data had to be glued together via complicated equivalence relations in order to perform the descent in dimension \cite{Hironaka:77, Villamayor:89, Villamayor:92, Bierstone_Milman:97}. In \cite{Encinas_Hauser:02} a global resolution datum, called {\em mobile}, was defined without any gluing. The local resolution invariant could then be defined directly by a local surgery, and Hironaka's trick  showed its independence from any choices. In the present paper, the construction of mobiles is refined even further, combining it with ideas from \cite{Wlodarczyk:05, Kollar:07} and, most essentially, from \cite{Hironaka:07, Villamayor:07}. }

\begin{defn}
A {\em habitat} is a pair $W=(W,E)$ consisting of a connected algebraic manifold $W$ 
over a field of characteristic zero (i.e., a non-singular algebraic variety),
together with a finite set $E=\{E_1,\dots,E_r\}$ of non-singular hypersurfaces
forming a normal crossings divisor. A habitat restricts to open subsets $U$ of $W$ by taking $\{E_1\cap U,\dots,E_r\cap U\}$. 

A non-empty closed subvariety $Z\subset W$ is called {\em transversal} to $(W,E)$ if 
for every point $p\in Z$, there is a system
of regular local parameters such that $Z$ is defined by a subset of these
parameters and such that every hypersurface $E_i$ containing $p$ is
defined by one of these parameters. The set $E$ may be empty. In this case, a subvariety is transversal if and only if it is non-singular.\footnote{Geometry: One should think of the set $E$ as the collection of the exceptional divisors created by the blowups so far; they will be used later on to identify and separate the combinatorial portion of a resolution problem.}

\end{defn}

%-------------------------------------------------

\begin{prop}
The blowup of a habitat along a transversal subvariety, 
with the proper transforms of hypersurfaces in $E$ 
supplemented by the exceptional divisor,
is again a habitat.
\end{prop}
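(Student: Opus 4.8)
The assertion is local on $W'=\mathrm{Bl}_Z W$, so I would fix a point $p'\in W'$, put $p=\pi(p')$ for the blowup map $\pi\colon W'\to W$, and check smoothness of $W'$ together with the normal crossings property of $E'$ near $p'$. If $p\notin Z$, then $\pi$ is an isomorphism near $p'$, the exceptional divisor avoids $p'$, and $E'$ agrees with $E$ there, so nothing has to be shown. Hence the whole content sits over $Z$, and I would assume $p\in Z$ from now on.

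The plan is to feed the transversality hypothesis directly into a chart computation. By definition of transversality there is a regular system of local parameters $x_1,\dots,x_n$ at $p$ (with $n=\dim W$) such that $Z=V(x_1,\dots,x_c)$, where $c$ is the codimension of $Z$, and such that each $E_i$ passing through $p$ equals $V(x_{j_i})$ for a single index $j_i$; the indices $j_i$ are pairwise distinct because $E$ is a normal crossings divisor. In these coordinates the blowup of $Z$ is monomial: it is covered by the affine charts $U_k$, $1\le k\le c$, with $y_k=x_k$, $y_j=x_j/x_k$ for $j\le c$, $j\ne k$, and $y_j=x_j$ for $j>c$, so that $x_k=y_k$, $x_j=y_jy_k$ for $j\le c$ with $j\ne k$, and $x_j=y_j$ for $j>c$. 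Each $U_k$ is thus an affine space in the coordinates $y_1,\dots,y_n$, which already shows that $W'$ is smooth (re-proving the classical fact that blowing up a smooth center in a smooth variety stays smooth, with exceptional divisor a $\mathbb{P}$-bundle over $Z$). As $W$ is connected and smooth it is irreducible, and the blowup of an integral scheme along a nonzero ideal sheaf is integral, so $W'$ is connected.

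It remains to identify, inside the chart $U_k$ containing $p'$, the components of $E'$ and to see that they are coordinate hyperplanes. The exceptional divisor is $D=V(y_k)$. For the proper transform of $E_i=V(x_{j_i})$ I would split into the two cases provided by transversality. If $j_i>c$, then $Z\not\subset E_i$ and $x_{j_i}=y_{j_i}$, so $\tw{E_i}=V(y_{j_i})$ is a coordinate hyperplane. If $j_i\le c$, then $Z\subset E_i$; for $j_i\ne k$ one has $x_{j_i}=y_{j_i}y_k$, whence the total transform is $V(y_{j_i})\cup D$ and the proper transform is the coordinate hyperplane $\tw{E_i}=V(y_{j_i})$, while for $j_i=k$ the total transform of $E_i$ is exactly $D$ and $\tw{E_i}$ misses $U_k$. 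Thus the components of $E'$ meeting $U_k$ are distinct coordinate hyperplanes $V(y_j)$: the surviving indices $j_i\ne k$ are pairwise distinct by the normal crossings property of $E$, and $D=V(y_k)$ uses the one index $k$ that no surviving proper transform can use. Coordinate hyperplanes are smooth and cross normally, which is precisely the claim; letting $k$ and $p'$ vary completes the proof.

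The only genuine bookkeeping — and the step I expect to be the main, if elementary, obstacle — is this case distinction together with the control of the exceptional divisor: one must verify that two distinct components never share a defining coordinate and that $D$ is not accidentally one of the $\tw{E_i}$. This is exactly where transversality is used twice over: once to make $Z$ itself smooth, so that the blowup and $D$ are smooth, and once to force every $E_i$ through $p$ to be a coordinate hyperplane in the \emph{same} coordinate frame that trivialises $Z$ — which is what renders the blowup monomial and keeps all components coordinate hyperplanes afterwards.
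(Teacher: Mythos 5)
Your proof is correct and follows essentially the same route as the paper's: use transversality to choose a regular system of local parameters adapted simultaneously to $Z$ and to the $E_i$, then verify in the standard affine charts of the blowup that the exceptional divisor and the proper transforms are distinct coordinate hyperplanes. Your version is merely more explicit about the case distinction ($Z\subset E_i$ or not, and which chart one sits in), which the paper compresses into ``this system of regular local parameters has the required properties.''
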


\begin{proof} 
Let $f:W'\to W$ be the blowup. Let $p'\in W'$, and let $p=f(p')$. Let $n:=\dim(W)$ and $m:=\dim(Z)$, where $Z$ is the blowup center. Assume that $u_1,\dots,u_n$ is a set of local parameters at $p$ such that $Z$ is locally defined by $u_1,\dots,u_m$, and each of the hypersurfaces in $E$ is locally defined by $u_i$ for some $i$, $1\le i \le n$. Then any point in $f^{-1}(p)$ is contained in some affine open chart of $W'$ with regular parameters $u_j,\frac{u_1}{u_j},\dots,\frac{u_m}{u_j},u_{m+1},\dots,u_m$ for some $j$, $1\le j\le m$, or translations of these functions e.g. $\frac{u_1}{u_j}-c$ where $c$ is a constant. This system of regular local parameters has the required properties.
\end{proof}

%-------------------------------------------------

\begin{defn}
Let $(W,E)$ be a habitat. We say that an ideal sheaf $I\subset\OO_W$ of the structure sheaf  $\OO_W$ of regular functions on $W$ is {\em principal monomial} if $I$ is a tensor product of the invertible ideal sheaves of $E_i$, for $E_i\in E$.  
A {\em resolution} of $I$ is a finite sequence of blowups of the habitat 
along transversal centers  
$W_r \rightarrow\dots\rightarrow W_1  \rightarrow W$ 
such that the pullback of $I$ in $\OO_{W_r}$ is principal monomial.\footnote{If $I$ is the ideal sheaf of a hypersurface $X$, then a resolution of $I$
is a resolution of $X$, in the sense that the inverse image of $X$ is
the union of non-singular hypersurfaces forming a normal crossings divisor.}
\end{defn}

%-------------------------------------------------

\subsection{Ideals and Algebras.} 

In the next few pages, we define the relevant algebraic objects of our study. As the role of the set of hypersurfaces $E$ is only to keep track of the transversality of the possible centers of blowup with the components of the exceptional divisor, we will not mention $E$ until explicitly needed.

%-------------------------------------------------

\begin{defn}
An {\em ideal with control} $(I,c)$ on a manifold $W$ is an ideal $I\subset\OO_W$
together with an integer $c>0$, the {\em control}.\footnote{We follow here and in the sequel Hironaka and his notion of {\em idealistic exponent}  \cite{Hironaka:77}.} The {\em singular locus} of $(I,c)$ is the set of all points $p\in W$ such that the order of $I$ at $p$ is at least $c$. The {\em sum} of two ideals with control $(I_1,c_1)$ and $(I_2,c_2)$ on the same manifold is defined as the ideal $I_1^{c_2}+I_2^{c_1}$ with control $c_1c_2$. We say that $(I_1,c_1)$ and $(I_2,c_2)$ are {\em equivalent} if there exists a positive integer $k$ such that $I_1^{kc_2}=I_2^{kc_1}$. 
\end{defn}

\begin{defn}
Let $Z\subset W$ be a transversal subvariety contained in the singular locus
of $(I,c)$, and let $f:W'\to W$ be the blowup of $W$ along $Z$. Any element in
the pullback $f^\ast(I)$ is divisible by the $c$-th power of a local
generator of the exceptional divisor. Therefore $f^\ast(I)$ can be 
written as a product $M^c\cdot I'$, where $M$ is the ideal of the exceptional divisor $D=f^{-1}(Z)$ and $I'$ is an ideal sheaf on $W'$. The pair $(I',c')=(I',c)$ is called the {\em transform} of $(I,c)$ under $f$, or the {\em controlled transform} of $I$.

A {\em resolution} of an ideal $I$ with control $c$ is a finite sequence of blowups
with transversal centers contained in the singular locus such that the last singular locus is empty.\footnote{The existence of resolutions of ideals can easily be reduced to the existence of resolutions of controlled ideals: one just needs to resolve the
ideal with control equal to 1. Then the last transform is
$(\OO_{W_r},1)$, and since the pullback of $I$ can always be written
as a product of its controlled transform and a monomial ideal, the last
pullback is a monomial.}
\end{defn}

%-------------------------------------------------

\begin{prop}
Equivalent ideals with control have the same singular locus, equivalent
transforms and the same resolutions. \end{prop}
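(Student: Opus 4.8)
The plan is to treat the three assertions in turn, reducing everything to the single elementary fact that on a non-singular variety the order function $\ord_p$ at a point $p$ is additive on products of ideals, i.e. $\ord_p(IJ)=\ord_p(I)+\ord_p(J)$, and in particular $\ord_p(I^n)=n\cdot\ord_p(I)$. This holds because the local ring $\OO_{W,p}$ is regular, so its associated graded ring with respect to the maximal ideal is a polynomial ring, hence a domain, and the initial degree of a product equals the sum of the initial degrees.

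First I would establish the equality of singular loci. Suppose $(I_1,c_1)$ and $(I_2,c_2)$ are equivalent, say $I_1^{kc_2}=I_2^{kc_1}$ for some positive integer $k$. Applying $\ord_p$ and using multiplicativity gives $kc_2\cdot\ord_p(I_1)=kc_1\cdot\ord_p(I_2)$, hence $\ord_p(I_1)/c_1=\ord_p(I_2)/c_2$ at every point $p$. Consequently $\ord_p(I_1)\ge c_1$ holds if and only if $\ord_p(I_2)\ge c_2$ holds, which is precisely the statement that the two singular loci coincide.

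Next I would compare the transforms. Let $Z$ be a transversal center contained in the common singular locus and $f\colon W'\to W$ the blowup along $Z$, with $M$ the invertible ideal of the exceptional divisor. Writing $f^\ast(I_1)=M^{c_1}I_1'$ and $f^\ast(I_2)=M^{c_2}I_2'$ as in the definition of the controlled transform, I would pull back the relation $I_1^{kc_2}=I_2^{kc_1}$. Since pullback of ideal sheaves commutes with products and powers, this yields $M^{kc_1c_2}(I_1')^{kc_2}=M^{kc_1c_2}(I_2')^{kc_1}$. Because $M$ is invertible we may cancel the common factor $M^{kc_1c_2}$ (locally it is generated by a non-zero-divisor), obtaining $(I_1')^{kc_2}=(I_2')^{kc_1}$. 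Thus the transforms $(I_1',c_1)$ and $(I_2',c_2)$ are again equivalent, with the same witnessing integer $k$.

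Finally, for the equality of resolutions I would argue by induction on the length of a blowup sequence. A sequence is admissible precisely when each successive center is transversal and contained in the current singular locus, and it succeeds when the final singular locus is empty. Transversality is a condition on the center and the habitat alone, hence insensitive to which of the two equivalent data we use. By the first two parts, at every stage of a common blowup sequence the two transforms remain equivalent and so share their singular locus; therefore the condition ``center contained in the singular locus'' is the same for $(I_1,c_1)$ and $(I_2,c_2)$ at each step, and terminal emptiness holds for one exactly when it holds for the other. Hence a sequence of blowups resolves $(I_1,c_1)$ if and only if it resolves $(I_2,c_2)$. The argument is essentially routine; the only points needing justification are the two algebraic facts invoked above — multiplicativity of $\ord_p$ on a regular local ring and cancellation of the invertible ideal $M$ — both standard and using only the non-singularity of $W$ (and hence of $W'$). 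The mild subtlety, which the computation in the third paragraph settles, is that the same integer $k$ can be carried through the pullback, so that equivalence is preserved step by step.
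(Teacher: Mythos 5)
Your proof is correct. The paper states this proposition without any proof, treating it as routine; your argument --- multiplicativity of $\ord_p$ on the regular local ring $\OO_{W,p}$ for the singular loci, cancellation of the invertible exceptional ideal $M$ for the transforms, and induction along the blowup sequence for resolutions --- is exactly the standard verification the authors leave to the reader.
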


%\begin{proof} Straightforward. \end{proof}

%-------------------------------------------------

\begin{prop}
The sum operation is commutative and associative.
The singular locus is the intersection of the singular loci of the summands.
If a transversal subvariety is contained in the singular locus of the sum,
then the transform of the sum is the sum of the transforms of the summands.
\end{prop}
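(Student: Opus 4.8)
The plan is to handle the three assertions in order of increasing depth, the first two feeding the third. Commutativity needs no work: by definition the sum of $(I_1,c_1)$ and $(I_2,c_2)$ is the ideal $I_1^{c_2}+I_2^{c_1}$ with control $c_1c_2$, and interchanging the summands yields the same ideal with the same control $c_2c_1=c_1c_2$, so the two sums are literally identical, not merely equivalent.

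For the statement about singular loci I would use that $W$ is smooth, so that at each point $p$ the order $\ord_p$ is a valuation on $\OO_{W}$ at $p$ (the associated graded ring being a polynomial ring, hence a domain). Then $\ord_p(I^c)=c\,\ord_p(I)$ and $\ord_p(A+B)=\min\{\ord_p(A),\ord_p(B)\}$, so that
\[
\ord_p\!\left(I_1^{c_2}+I_2^{c_1}\right)=\min\{c_2\,\ord_p(I_1),\,c_1\,\ord_p(I_2)\}.
\]
The point $p$ lies in the singular locus of the sum exactly when this is $\ge c_1c_2$, i.e. when $\ord_p(I_1)\ge c_1$ and $\ord_p(I_2)\ge c_2$ at once, which is the intersection of the two singular loci. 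I record for later that the same computation, valid along any valuation $v$, gives normalized order $\min\{\ord_p(I_1)/c_1,\ \ord_p(I_2)/c_2\}$ for the sum.

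For the transform I would argue directly from the fact that the inverse-image ideal commutes with sums and products. Since the center $Z$ lies in the singular locus of the sum, the previous paragraph places it in the singular loci of both summands, so both transforms exist: $f^\ast(I_j)=M^{c_j}I_j'$, with $M$ the invertible ideal of the single exceptional divisor $D=f^{-1}(Z)$. Hence
\[
f^\ast\!\left(I_1^{c_2}+I_2^{c_1}\right)=f^\ast(I_1)^{c_2}+f^\ast(I_2)^{c_1}=M^{c_1c_2}(I_1')^{c_2}+M^{c_1c_2}(I_2')^{c_1}=M^{c_1c_2}\!\left((I_1')^{c_2}+(I_2')^{c_1}\right),
\]
and dividing off the invertible factor $M^{c_1c_2}$ identifies the transform of the sum as $\big((I_1')^{c_2}+(I_2')^{c_1},\,c_1c_2\big)$, which is precisely the sum of the transforms $(I_1',c_1)$ and $(I_2',c_2)$. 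This equality is exact.

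Associativity is the only genuinely delicate point, and here equality must be read in the sense supplied by the earlier proposition --- same singular locus, (inductively) equivalent transforms, and the same resolutions --- rather than as equality of ideals; already for $I_j$ the coordinate ideals $(x_j)$ with $c_j=2$ the two bracketings $((I_1,c_1)+(I_2,c_2))+(I_3,c_3)$ and $(I_1,c_1)+((I_2,c_2)+(I_3,c_3))$ yield different (but identically behaved) ideals. I would prove associativity in this operational sense by combining the first two parts with an induction on the length of a resolution. Applying the singular-locus formula twice shows both bracketings have singular locus $\sing(I_1,c_1)\cap\sing(I_2,c_2)\cap\sing(I_3,c_3)$, which is symmetric, so the admissible centers coincide; applying the transform formula twice shows that a blowup in such a center carries each bracketing of the triple to the same bracketing of the transformed triple $(I_1',I_2',I_3')$. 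Thus the two bracketings remain paired under every admissible blowup, and by induction a resolution of one is a resolution of the other. As a conceptual check I would note that, by the remark recorded above applied along every divisorial valuation $v$, both bracketings have normalized order $\min\{\ord_p(I_1)/c_1,\ \ord_p(I_2)/c_2,\ \ord_p(I_3)/c_3\}$, hence the same integral closure, the associativity of the minimum being the true source of the claim. The main obstacle is therefore definitional rather than computational: one must fix from the start that associativity is asserted up to the resolution-relevant equivalence, after which the two earlier parts make the induction routine.
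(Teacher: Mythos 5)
Your argument is correct. Note first that the paper states this proposition without any proof at all, so there is nothing to compare against; it is treated as routine bookkeeping with the definitions. Your computations for commutativity, for the singular locus (via $\ord_p(I^c)=c\,\ord_p(I)$ and $\ord_p(I+J)=\min\{\ord_p(I),\ord_p(J)\}$, both valid because $\OO_{W,p}$ is regular), and for the compatibility of controlled transform with sums (pulling back $I_1^{c_2}+I_2^{c_1}$ and cancelling the invertible factor $M^{c_1c_2}$) are exactly the intended ones, and the last two identities are indeed exact. The one place where you add genuine value is associativity: you are right that the two bracketings of a triple are \emph{not} equal as ideals, and arguably not even equivalent in the paper's strict sense ($J_1^{k}=J_2^{k}$ for some $k$ is not obvious for your example $(x^4,x^2y^2,y^4,z^4)$ versus $(x^4,y^4,y^2z^2,z^4)$), so the honest reading is the operational one you give --- equal singular loci at every stage, transforms that remain paired bracketings, hence identical admissible centers and identical resolutions --- which is all the proposition is ever used for. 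Flagging that interpretive point explicitly is a genuine improvement over the paper's silence.
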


%\begin{proof} Straightforward. \end{proof}

%-------------------------------------------------

\begin{defn} \label{def:rees}
A {\em Rees algebra} $A$ on a manifold $W$ is a coherent sheaf of locally finitely generated subalgebras of $\OO_W[T]$, where $T$ is a variable. 
We will write $A=\oplus A_i$ for the decomposition into homogeneous components. The $A_i$ are ideals in $\OO_W$, and will be considered with control $i$. The {\em singular locus} of a Rees algebra is the set of all points
where $A_i$ has order at least $i$, for each $i\geq 0$. 

If $Z$ is a transversal subvariety of $(W,E)$ inside the singular locus of $A$, the 
{\em transform} of $A$ under the blowup of $W$ along $Z$ is the algebra generated by all controlled transforms of $A_i$, for all $i>0$.

A {\em resolution} of a Rees algebra is a finite composition of blowups along
transversal centers inside the singular locus such that the
last singular locus is empty.

Let $A$ and $B$ be two Rees algebras. We write $A\subseteq B$ if and only if $A_{i}\subseteq B_{i}$ for each $i>0$; we say that $A$ and $B$ are {\em equivalent} if and only if there exists a positive integer $k$ such that $A_{ki}=B_{ki}$ for all $i>0$; 
% [Shall we better take the integral closure of Rees algebras?
% But then we would depend on Zariski-Samuel and others.]
the {\em sum} of $A$ and $B$ is the smallest Rees algebra containing both.
\end{defn}

%-------------------------------------------------

\begin{prop}
Equivalent Rees algebras have the same singular locus, equi\-valent
transforms under blowup, and the same resolutions.
\end{prop}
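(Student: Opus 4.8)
The plan is to reduce all three assertions to two elementary facts about the graded pieces $A_i$, after which the proposition follows formally from the definition of equivalence, namely $A_{ki}=B_{ki}$ for all $i>0$ and some fixed integer $k>0$.

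First I would establish, for every fixed $k>0$, the description of the singular locus
\[
\sing(A)=\{\,p\in W : \ord_p(A_{km})\ge km \text{ for all } m\ge 1\,\}.
\]
The inclusion ``$\subseteq$'' is immediate, since these are among the defining conditions of $\sing(A)$. For ``$\supseteq$'', I would use that $\OO_{W,p}$ is a regular local ring, so $\ord_p$ is additive on products and in particular $\ord_p(f^k)=k\,\ord_p(f)$. Given $f\in A_j$, the algebra property yields $f^k\in A_j^k\subseteq A_{jk}$, so if $\ord_p(A_{km})\ge km$ for all $m$, then $k\,\ord_p(f)=\ord_p(f^k)\ge jk$, whence $\ord_p(f)\ge j$; as $f\in A_j$ and $j$ were arbitrary, $p\in\sing(A)$. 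Applying this identity to both $A$ and $B$ with the same $k$ and using $A_{km}=B_{km}$ gives at once $\sing(A)=\sing(B)$.

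Next I would prove the key identity for the transform, $(A')_j=(A_j)'$ for every $j>0$, where $(A_j)'$ is the controlled transform of the ideal $A_j$ with control $j$, and $(A')_j$ is the $j$-th homogeneous piece of the transform algebra. Writing $M=(x)$ locally for the exceptional divisor, one has $(A_j)'=x^{-j}f^{*}(A_j)$, which is well defined because $Z\subseteq\sing(A)$ forces $\ord_p(A_j)\ge j$ for every $p\in Z$. By definition $A'$ is generated by the ideals $(A_i)'$ placed in degree $i$, so $(A')_j$ is the sum of the products $(A_{i_1})'\cdots(A_{i_s})'$ over all $i_1+\dots+i_s=j$. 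Using $f^{*}(I)f^{*}(J)=f^{*}(IJ)$ together with the graded algebra property $A_{i_1}\cdots A_{i_s}\subseteq A_j$, each such product equals $x^{-j}f^{*}(A_{i_1}\cdots A_{i_s})\subseteq x^{-j}f^{*}(A_j)=(A_j)'$; conversely $(A_j)'$ is the term with $s=1$. Hence $(A')_j=(A_j)'$. Since the controlled transform of an ideal depends only on that ideal (together with its control and the blowup), $A_{ki}=B_{ki}$ now gives $(A')_{ki}=(A_{ki})'=(B_{ki})'=(B')_{ki}$ for all $i$, so $A'$ and $B'$ are equivalent with the same parameter $k$. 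Both transforms are defined, since $Z\subseteq\sing(A)=\sing(B)$.

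Finally, the statement about resolutions would follow by induction on the number of blowups. A sequence of blowups is a resolution precisely when each successive center is transversal and contained in the singular locus of the current transform, and when the last singular locus is empty; transversality is a condition on the center and the habitat alone. By the first two steps the transforms of $A$ and $B$ stay equivalent (indeed with the same $k$) after each blowup and hence share the same singular locus at every stage, so a given sequence of centers is admissible for $A$ exactly when it is admissible for $B$, and terminates for one exactly when it terminates for the other. I expect the main obstacle to be the second step: verifying cleanly that the homogeneous pieces of the transform algebra are just the controlled transforms of the homogeneous pieces, where the interaction between the grading, the pullback of ideals, and the division by the exceptional factor $x^{-j}$ must be handled with care; the first and third steps are then essentially formal consequences of the definition of equivalence.
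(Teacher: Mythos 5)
Your proof is correct. Note that the paper itself states this proposition without any proof at all (as it does for the parallel statement about equivalent ideals with control), so there is no argument of the authors to compare against; your write-up simply supplies the routine verification they chose to omit. The two facts you isolate are exactly the right ones: additivity of $\ord_p$ on the regular local ring $\OO_{W,p}$ shows that $\sing(A)$ is determined by the subfamily $\{A_{km}\}_{m\ge 1}$ alone, and the identity $(A')_j=(A_j)'$ (which holds because every product $(A_{i_1})'\cdots(A_{i_s})'=x^{-j}f^{*}(A_{i_1}\cdots A_{i_s})$ is absorbed into $x^{-j}f^{*}(A_j)=(A_j)'$ by the graded algebra property, while the $s=1$ term gives the reverse containment) reduces the statement about transforms to the fact that the controlled transform of an ideal depends only on the ideal. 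The claim about resolutions then follows by the induction you describe. The only point worth stating slightly more carefully is that $x^{-j}f^{*}(A_j)$ is an honest ideal sheaf because $Z\subseteq\sing(A)$ forces $\ord_p(A_j)\ge j$ along $Z$ for \emph{every} $j$, not just the degrees in a generating set --- which you do note.
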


%\begin{proof} Straightforward. \end{proof}

%-------------------------------------------------

\begin{defn}
If $K$ is a finite set of positive integers and $B_k$ is an $\OO_W$-ideal for all $k\in K$, we define the Rees algebra $A=\oplus_{i=0}^\infty A_i$ {\em generated} by $B_k$, $k\in K$, by taking for $A_i$ the ideal generated by all products $B_{k_1}B_{k_2}\dots B_{k_m}$ with $k_1+\dots+k_m=i$. Thus $A$ is the smallest Rees algebra containing $B_k$ in degree $k$, for all $k\in K$. If $K$ is a finite set such that $A=\oplus_{i=0}^\infty A_i$ is the Rees algebra generated by $A_k$, $k\in K$, then we say that $K$ is a {\em set of generating degrees}.\footnote{Any Rees algebra $A$ has a set of generating degrees: Take a finite cover of $W$ by affine open subsets such that $A$ is finitely generated on each of these. If $N$ is the maximal degree of all these local generators, 
then $\{1,\dots,N\}$ is a set of generating degrees for $A$.}
\end{defn}
%-------------------------------------------------

Both Rees algebras and ideals with control are algebraic realizations
of ``resolution problems''. It is possible to go back and forth between
them.

%-------------------------------------------------

\begin{defn}
Let $A$ be a Rees algebra and let $I$ be an ideal with control $c$.
We say that $A$ and $(I,c)$ are {\em associated} if and only if there exists
a positive integer $k$ such that $A_{kci}=I^{ki}$ for all $i>0$. 
%[Does this make sense? Don't we only want inclusions $A_i \subset I^{j_i}$ and $I^j\subset A_{i_j}$? ]
\end{defn}

%-------------------------------------------------

\begin{prop}
Every Rees algebra is associated to an ideal with control, which is
unique up to equivalence. 
%[I don't believe this. Proof explicited below] 
Conversely, every ideal with control is
associated to a Rees algebra, which is unique up to equivalence.

Associated ideals with control and Rees algebras have the same singular locus, and associated transforms under blowup. The sums of associated pairs of ideals with control and Rees algebras are again associated.
\end{prop}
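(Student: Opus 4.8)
The plan is to construct explicit \emph{canonical} representatives for the association, to prove the two uniqueness claims by bare index-chasing, and then to deduce the assertions on singular loci, transforms and sums by reducing to these representatives, using the uniqueness together with the earlier Propositions (equivalent objects share singular loci and have equivalent transforms). For an ideal with control $(I,c)$ I would take the canonical Rees algebra $\OO_W[IT^c]=\bigoplus_{m\ge 0}I^mT^{cm}$, i.e. the one generated by $I$ in degree $c$; its components are $A_{cm}=I^m$ (and $A_i=0$ for $c\nmid i$), so $A_{ci}=I^i$ and $A$ is associated to $(I,c)$ with $k=1$. Conversely, for a Rees algebra $A$ I would invoke the standard fact that a finitely generated graded algebra has a Veronese subalgebra generated in degree one: there is $d>0$ with $A_{dm}=(A_d)^m$ for all $m\ge 1$. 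Setting $c:=d$ and $I:=A_d$ gives $A_{ci}=I^i$, so $A$ is associated to $(I,c)$.

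Both uniqueness statements are then elementary. If $A$ is associated to $(I,c)$ and to $(J,c')$ with exponents $k,k'$, comparing $A$ in the common degree $kck'c'$ yields $I^{kk'c'}=A_{kck'c'}=J^{kk'c}$, which is exactly $(I,c)\sim(J,c')$ with $\ell=kk'$. If two Rees algebras $A,B$ are associated to the same $(I,c)$ with exponents $k,k'$, then for $\ell=kk'c$ one checks $A_{\ell i}=I^{kk'i}=B_{\ell i}$ for all $i$, so $A\sim B$. The same bookkeeping shows that ``being associated to a fixed pair'' is invariant under equivalence on either side; I would record this as a lemma, as it is used below.

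For a single associated pair I would reduce the singular-locus and transform claims to the canonical algebra. By uniqueness $A\sim\OO_W[IT^c]$, and equivalent objects have equal singular loci and equivalent transforms by the Propositions above, so it suffices to treat $\OO_W[IT^c]$. There the identity $\ord_p(I^m)=m\,\ord_p(I)$ collapses the condition $\{\,p:\ord_p(I^m)\ge cm\ \forall m\,\}$ to $\{\,\ord_p(I)\ge c\,\}$, matching the singular locus of $(I,c)$; and $f^\ast(I)=M^cI'$ gives $f^\ast(I^m)=M^{cm}(I')^m$, so the transform of $\OO_W[IT^c]$ is $\OO_{W'}[I'T^c]$, the canonical algebra of the transform $(I',c)$. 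Combined with the invariance lemma this yields that associated pairs have associated transforms.

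The step I expect to be the main obstacle is that sums of associated pairs are again associated. Reducing to canonical representatives, $A+B=\OO_W[I_1T^{c_1},I_2T^{c_2}]$ has components $(A+B)_i=\sum_{ac_1+bc_2=i}I_1^aI_2^b$, which must be compared with the powers $(I_1^{c_2}+I_2^{c_1})^\ell=\sum_{p+q=\ell}I_1^{c_2p}I_2^{c_1q}$ of the defined sum in degrees $i=c_1c_2\ell$. When $\gcd(c_1,c_2)=1$ this is clean: every solution of $ac_1+bc_2=c_1c_2\ell$ forces $c_2\mid a$ and $c_1\mid b$, whence the two ideals coincide and the pair is associated with $k=1$. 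In the non-coprime case they differ by cross terms (already $(A+B)_4=I_1^2+I_1I_2+I_2^2$ against $I_1^2+I_2^2$), and no choice of $k$ repairs this on the nose, the cross terms being integral over but not contained in the defined sum. I therefore expect the honest formulation of this last assertion to be up to integral closure, the idealistic-exponent classes coinciding in all cases; the crux would then be to show that these extra integral elements neither enlarge the singular locus nor are destroyed by blowup, which is precisely the point where one must invoke the compatibility of integral closure with the order function and with controlled transforms rather than bare ideal identities.
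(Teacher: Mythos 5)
Your two constructions coincide with the paper's own (four\--line) proof: it likewise takes the algebra generated by $I$ in degree $c$, and for the converse picks $n$ divisible by all generating degrees and declares $(A_n,n)$ associated to $A$ --- your appeal to the Veronese fact is the same device, stated more carefully. The index\--chasing for both uniqueness claims and the reduction of the singular\--locus and transform assertions to the canonical representative are correct; the paper simply leaves all of this to the reader.

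Your reservation about the sum clause is justified, and the difficulty lies in the statement, not in your argument: the paper's proof does not address sums at all, and with the definitions as written the clause fails for non\--coprime controls. Concretely, for $I_1=(x)$, $I_2=(y)$, $c_1=c_2=2$, the sum of the canonical Rees algebras has $(A+B)_{4k}=(x,y)^{2k}$, whereas $\bigl((x^2)+(y^2)\bigr)^{k}=(x^2,y^2)^{k}$ never contains $x^{2k-1}y$; so no exponent realizes the association, and the two candidate ideals with control are not even equivalent in the paper's sense. Your coprime computation is right, and your diagnosis --- the discrepancy consists of elements integral over the defined sum, so the honest statement is ``associated up to integral closure'', i.e.\ as idealistic exponents --- is the standard repair; since order functions and controlled transforms are insensitive to integral closure, everything the paper actually uses downstream (equal singular loci of the two sums, equal order functions, compatible transforms) survives. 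If one prefers to avoid integral closure, one can instead redefine the sum of $(I_1,c_1)$ and $(I_2,c_2)$ as $\bigl(\sum_{ac_1+bc_2=c_1c_2}I_1^{a}I_2^{b},\,c_1c_2\bigr)$: a short check with the general solution of $ac_1+bc_2=kc_1c_2i$ shows this is associated to $A+B$ on the nose (with $k=1$), and since the exponent pairs $(a,b)$ lie on the segment joining $(c_2,0)$ to $(0,c_1)$, its order function agrees everywhere with that of $I_1^{c_2}+I_2^{c_1}$, so nothing else in the paper is disturbed.
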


\begin{proof} 

The Rees algebra associated to the ideal $I$ with control $c$ is the algebra
generated by $I$ in degree $c$. Conversely, let $A$ be a Rees algebra with
generating set $K$, and let $n$ be an integer which is divisible by all integers
in $K$. Then the ideal $A_n$ with control $n$ is associated to $A$.
\end{proof}

%-------------------------------------------------

\begin{defn}
Denote by $\Delta$ the operator that takes an ideal $I$ in $\OO_W$
and produces the ideal $\Delta(I)$ generated by $I$ and all first order partial derivatives of sections of $I$.
For any Rees algebra $A=\oplus_{i=0}^\infty A_i$ with set of generating degrees $K$, the algebra $\widetilde A$ generated by 
$\Delta^j(A_k)$ in degree $k-j$ for $k\in K$ and $j<k$ is called
the {\em differential closure} of $A$.\footnote{The definition
does not depend on the choice of the set $K$ of generating degrees.}

A {\em Villamayor algebra} is a differentially closed Rees algebra $A=\widetilde A$.  Equivalently, it suffices to require that $\Delta(A_{i+1})\subseteq A_i$ for all $i\ge 0$.\footnote{The differential closure is the smallest Villamayor algebra containing $A$. Note that if $A$ is a Villamayor algebra, then we have $A_{i+1}\subseteq A_i$ for each $i>0$. This is a consequence of $I\subseteq \Delta(I)$.}   
A set $K$ such that $\Delta^j(A_k)$ generates $A$ in degree $k-j$ for $k\in K$ and $j<k$ is called a set of generating degrees of $A$ as a Villamayor algebra. 
\end{defn}

%-------------------------------------------------

\begin{defn}
Let $A$ be a Rees algebra. Then the {\em interior} $A^\circ$ of $A$ is defined as
the Villamayor algebra generated by all Villamayor algebras $B$ contained in $A$. 
%[Does this make sense? Is it clear that this is again an algebra?]
\end{defn}

%-------------------------------------------------

\begin{rem}
The interior algebra can be constructed by induction on the degree $i$.
We set $A_0^\circ=A_0=\OO_W$ and $A_1^\circ =A_1$. For $i>1$, the sections of $A_i^\circ$ are the sections $f$ of $A_i$ that satisfy $\Delta(f)\subseteq A_{i-1}^\circ$. 
For obtaining a set of generating degrees of $A^\circ$, it suffices to do the construction
up to the largest degree of a set of generating degrees of $A$.
\end{rem}

An essential step of Hironaka's resolution proof is induction on the
dimension. We will prepare such an induction by defining restrictions of Rees algebras
to smooth, locally closed subvarieties. Even though the restrictions may be local in nature, the induction argument will remain global: restriction plays only an auxiliary role.

%-------------------------------------------------

\begin{defn}
Let $(W,E)$ be a habitat and let $V$ be a transversal subvariety of $W$. The algebra $A(V)$ is generated in degree~1 by the ideal defining $V$ in $W$. If $A$ is a Villamayor algebra on $W$ such that $A(V)\subseteq A$, then the restriction $A|_V$ is defined as the Villamayor algebra on $V$ with $i$-th component the image of $A_i$ in the quotient ring $\OO_V$ of $\OO_W$.
\end{defn}

%-------------------------------------------------

\begin{defn}
A {\em gallimaufry of dimension} $d$ in a habitat $(W,E)$ is a pair
$\GG= (A,d)$, where $A$ is a Villamayor algebra on $W$ and $d$ is an integer  $0\le d\le\dim(W)$, such that every point $p$ in the 
singular locus of $A$ has a neighborhood $U$ and a transversal subvariety
$V\subseteq U$ of dimension $d$ not contained in any hypersurface of $E$ 
so that $A(V)\subseteq A$ holds in $U$.
Such a local transversal subvariety of $W$ is called a {\em zoom} for $(A,d)$ at $p$.\footnote{As a first example of a gallimaufry, take any Villamayor algebra $A$ and set $d=\dim(W)$. Then $W$ itself is a zoom for any point $p\in W$. More generally, zooms mimic the notion of hypersurfaces of maximal contact, and the restriction of gallimaufries to zooms captures the passage from ideals to coefficient ideals.}

The {\em singular locus}, the {\em transform} under blowup and the {\em resolution} of a gallimaufry $(A,d)$ are defined as the respective items of $A$.
\end{defn}

%-------------------------------------------------

\subsection{Transforms.} 

We next describe the behaviour of the various algebraic items under blowup. We start with Rees algebras.

\begin{lem} \label{lem:bold}
The singular locus of $A(V)$ is equal to $V$. If we blow up a proper
subvariety $Z\subset V$, the transform of $A(V)$ is equal to $A(V')$, where $V'$ is the
strict transform of $V$.  If we blow up $Z=V$, then the transform is the trivial algebra generated by $\OO_W$ in degree $1$.
\end{lem}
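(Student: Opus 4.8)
The plan is to reduce all three assertions to a single local computation in regular parameters adapted to the smooth flag $Z\subseteq V\subseteq W$, together with the additivity of the order function under products of ideals (so that $\ord_p(I^i)=i\cdot\ord_p(I)$ in a regular local ring). Recall that $A(V)$ is generated in degree $1$ by the ideal $I_V$ defining $V$, so its homogeneous component is $A(V)_i=I_V^{\,i}$, carried with control $i$.

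First I would settle the singular locus. Since $V$ is transversal, hence non-singular, at any point $p\in V$ the ideal $I_V$ is generated by a subset of a regular system of parameters, whence $\ord_p(I_V)=1$; and $\ord_p(I_V)=0$ for $p\notin V$, where $I_V$ is locally the unit ideal. By multiplicativity of the order, $\ord_p(A(V)_i)=\ord_p(I_V^{\,i})=i\cdot\ord_p(I_V)$, which is $\ge i$ for every $i$ exactly when $p\in V$. Hence the singular locus of $A(V)$ equals $V$.

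Next, the transform under the blowup $f\colon W'\to W$ of a proper subvariety $Z\subsetneq V$. Since $Z$ is transversal and lies in the singular locus $V$, the blowup is admissible and the controlled transforms are defined. I would fix $p\in Z$ and choose regular parameters $u_1,\dots,u_n$ with $V=\{u_1=\cdots=u_m=0\}$ and $Z=\{u_1=\cdots=u_k=0\}$, where $m<k$ because $Z$ is a proper smooth subvariety of $V$. In the affine chart of $W'$ with pivot $u_k$, writing $u_j=u_k w_j$ for $j<k$, one finds $f^\ast I_V=(u_k)\cdot(w_1,\dots,w_m)$; this exhibits $f^\ast I_V=M\cdot I_{V'}$, where $M=(u_k)$ is the ideal of the exceptional divisor $D=f^{-1}(Z)$ and $I_{V'}=(w_1,\dots,w_m)$ defines the strict transform $V'$, which is again smooth (it is the blowup of $V$ along $Z$). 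The remaining charts are symmetric. Taking $i$-th powers gives $f^\ast(I_V^{\,i})=M^i\cdot I_{V'}^{\,i}$, so the controlled transform of $(A(V)_i,i)$ is precisely $I_{V'}^{\,i}=A(V')_i$. As the transform of a Rees algebra is generated by the controlled transforms of its components, the transform of $A(V)$ is generated in degree $1$ by $I_{V'}$, that is, it equals $A(V')$. For the case $Z=V$ the same computation with $k=m$ (pivot $u_1$, $u_j=u_1 w_j$) yields $f^\ast I_V=(u_1)\cdot(1,w_2,\dots,w_m)=(u_1)=M$, so the controlled transform of $I_V$ is the unit ideal $\OO_{W'}$ and the transform of $A(V)$ is the trivial algebra generated by $\OO_{W}$ in degree $1$.

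The main obstacle is the local factorization $f^\ast I_V=M\cdot I_{V'}$: one must verify it in every chart of the blowup and confirm that the complementary factor is exactly the ideal of the strict transform $V'$ (and that $V'$ remains smooth), rather than merely an ideal with the same support. Once this identity is established, multiplicativity of $f^\ast$ and of the order function makes both the singular-locus statement and the passage to all higher degrees automatic.
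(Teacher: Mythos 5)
Your proof is correct; the paper's own proof of this lemma is literally the single word ``Clear,'' and your local computation in adapted regular parameters (with the factorization $f^\ast I_V = M\cdot I_{V'}$ checked chart by chart, plus multiplicativity of the order) is exactly the standard verification the authors are leaving implicit. No discrepancy to report.
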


\begin{proof}
Clear.
\end{proof}
%-------------------------------------------------

\begin{lem} \label{lem:rescom}
Let $A$ contain $A(V)$. The singular loci of $A$ and $A|_V$ coincide. If $Z\subset V$ is
a transversal subvariety contained in the singular locus, then
the transform of the restriction $A|_V$ under the blowup of $V$ along $Z$ is equal to the restriction of the transform of $A$ under the blowup of $W$ along $Z$ to the strict transform $V'$ of $V$.
\end{lem}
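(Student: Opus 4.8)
The plan is to treat the two assertions separately, the equality of singular loci first and the commutation of transform with restriction second; both rest on a local computation in a regular system of parameters adapted to $V$ and $Z$, and the crucial input in characteristic zero is the differential closedness of $A$.

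For the equality $\sing(A)=\sing(A|_V)$ I would first record that $\sing(A)\subseteq V$: since $A(V)\subseteq A$ we have $A_i\supseteq A(V)_i$, hence $\ord_p(A_i)\le\ord_p(A(V)_i)$, so the condition $\ord_p(A_i)\ge i$ for all $i$ forces $\ord_p(A(V)_i)\ge i$ for all $i$, i.e. $p\in\sing(A(V))=V$ by Lemma~\ref{lem:bold}. The inclusion $\sing(A)\subseteq\sing(A|_V)$ is then immediate, because restriction to the smooth subvariety $V$ never lowers the order of an ideal: for $p\in V$ one has $\ord_p\big((A|_V)_i\big)\ge\ord_p(A_i)$, so $p\in\sing(A)$ gives $\ord_p((A|_V)_i)\ge i$ for all $i$.

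The reverse inclusion $\sing(A|_V)\subseteq\sing(A)$ is the substantial point and the place where characteristic zero enters. Working in the completed local ring at $p\in V$, choose regular parameters $x_1,\dots,x_n$ with $V=\{x_1=\dots=x_e=0\}$, so that $x_1,\dots,x_e\in A_1$. Any section $f\in A_i$ expands as $f=\sum_\beta \frac{1}{\beta!}\,(\partial^\beta f|_V)\,x^\beta$, where $\beta$ runs over multi-indices in the first $e$ variables, $x^\beta=x_1^{\beta_1}\cdots x_e^{\beta_e}$, and $\partial^\beta$ is the corresponding derivative (here $\beta!$ is invertible because the characteristic is zero). Differential closedness gives $\partial^\beta f\in A_{i-|\beta|}$, hence $\partial^\beta f|_V\in (A|_V)_{i-|\beta|}$; if $p\in\sing(A|_V)$ this forces $\ord_p(\partial^\beta f|_V)\ge i-|\beta|$. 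Since $\ord_p(x^\beta)=|\beta|$ and the order of the restricted coefficient computed on $V$ agrees with its order in $W$ (it involves only $x_{e+1},\dots,x_n$), each term of the expansion has order $\ge i$, so $\ord_p(f)\ge i$. As $f\in A_i$ and $i$ were arbitrary, $p\in\sing(A)$.

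For the second assertion I would argue chart by chart on the blowup. Choose coordinates as above with, in addition, $Z=\{x_1=\dots=x_c=0\}$ for some $e<c\le n$; the strict transform $V'$ is visible in the $x_j$-chart for $e<j\le c$, where both the exceptional divisor of $f\colon W'\to W$ and that of the blowup of $V$ along $Z$ are cut out by $x_j$. This matching of exceptional divisors is the key geometric point: it shows that for each section $g\in A_i$ the controlled transform commutes with restriction, i.e. $(g|_V)'=(g')|_{V'}$, the common power $x_j^{\,i}$ being split off on either side (divisibility holds because $Z\subseteq\sing(A)$ forces $\ord_p(A_i)\ge i$ along $Z$). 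Since the transform of a Rees algebra is generated by the controlled transforms of its components and restriction is the passage to images in $\OO_{V'}$ -- compatible with products and with taking generated algebras -- the generator-wise identity propagates to $(A|_V)'=(A')|_{V'}$. Finally, $(A')|_{V'}$ is defined because monotonicity of the controlled transform together with Lemma~\ref{lem:bold} gives $A(V')=A(V)'\subseteq A'$.

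I expect the main obstacle to be the reverse inclusion of the first part: making the Taylor/differential-closure argument rigorous -- in particular comparing orders computed on $V$ and on $W$ and justifying the expansion in the completion -- since this is exactly the abstract incarnation of Hironaka's maximal contact. The second part is then largely bookkeeping, the only real subtleties being the verification that the two exceptional divisors agree on $V'$ and that the relevant orders are $\ge i$, so that all controlled transforms are genuine ideals.
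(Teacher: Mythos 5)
Your proof is correct and follows essentially the same route as the paper's: adapted local parameters, the Taylor expansion in the normal directions made possible by differential closedness (this is exactly the computation behind the identification $A=\beta^\ast(A|_V)+A(V)$ in the first construction of Theorem~\ref{thm:aus}, which the paper's one-line proof invokes), and a chart-by-chart check that the exceptional divisors of the two blowups agree along $V'$. The paper delegates both steps to that identification and to ``the computation is straightforward''; you have simply written out what it leaves implicit.
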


\begin{proof}
We can choose regular local parameters such that $V$ is given by a subset
of these, and identify $A$ with the result of the first description
of the extension of $A|_V$. The computation is straightforward.
\end{proof}

%-------------------------------------------------

\begin{lem}[Giraud] \label{lem:im} 
Let $I$ be an ideal with control $c$ on $W$ and let $Z$ be a transversal
subvariety of $(W,E)$ contained in the singular locus of $(I,c)$. 
Let $I'$ be the controlled transform of $(I,c)$ under the blowup of $W$ along $Z$. 
Then the controlled transform of the ideal $\Delta(I)$ with control $c-1$
is contained in $\Delta(I')$.
\end{lem}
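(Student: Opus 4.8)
The plan is to reduce the statement to a local computation in a single blowup chart, using that both the operator $\Delta$ and the formation of controlled transforms are compatible with restriction to open sets; since ideal containment is local, it suffices to verify $\Delta(I)'\subseteq\Delta(I')$ on an affine chart covering $f^{-1}(p)$ for each $p\in Z$. Fix such a $p$. Because $Z$ is transversal to $(W,E)$, I may choose regular local parameters $u_1,\dots,u_n$ at $p$ in which $Z=V(u_1,\dots,u_r)$, with $r$ the codimension of $Z$. The blowup along $Z$ is then covered by the monoidal charts; on the chart with principal coordinate $u_1$ I set $u_1=x$, $u_j=x\,y_j$ for $2\le j\le r$, and keep $u_k$ for $k>r$, so that the exceptional divisor is $\{x=0\}$ and $M=(x)$. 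Since $\ord_Z I\ge c$, a generator $g$ of $I$ pulls back to $f^*g=x^c\,\tilde g$ with $\tilde g$ a local generator of $I'$.

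Next I would run the chain rule on $f^*g=g(x,xy_2,\dots,xy_r,u_{r+1},\dots)$ to express the pullbacks of the first-order partials of $g$ in the new coordinates. A direct computation gives
\begin{align*}
f^*(\partial_{u_i} g) &= x^{c-1}\,\partial_{y_i}\tilde g, \qquad 2\le i\le r,\\
f^*(\partial_{u_k} g) &= x^{c}\,\partial_{u_k}\tilde g, \qquad k>r,\\
f^*(\partial_{u_1} g) &= x^{c-1}\Bigl(c\,\tilde g + x\,\partial_x\tilde g - \sum_{j=2}^r y_j\,\partial_{y_j}\tilde g\Bigr).
\end{align*}
Since $\ord_Z I\ge c$ forces $\ord_Z\Delta(I)\ge c-1$, the center $Z$ lies in the singular locus of $(\Delta(I),c-1)$, so the controlled transform $\Delta(I)'$ is well defined and is obtained locally from $f^*\Delta(I)$ by dividing out $M^{c-1}=(x^{c-1})$. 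The formulas above show that every generator of $f^*\Delta(I)$ is divisible by $x^{c-1}$ (using $f^*g=x^{c-1}\cdot x\tilde g$), the quotients being $x\tilde g$, $\partial_{y_i}\tilde g$, $x\,\partial_{u_k}\tilde g$, and $c\tilde g+x\,\partial_x\tilde g-\sum_{j}y_j\,\partial_{y_j}\tilde g$.

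It then remains to check that each quotient lies in $\Delta(I')$, and here I would invoke that $\Delta$ is intrinsic, i.e. independent of the coordinate system: $\Delta(I')$ is generated by $I'$ together with all first-order partials of its sections in the new coordinates $x,y_2,\dots,y_r,u_{r+1},\dots$. Thus $\tilde g\in I'\subseteq\Delta(I')$ and $\partial_{y_i}\tilde g,\ \partial_x\tilde g,\ \partial_{u_k}\tilde g\in\Delta(I')$, whence $x\tilde g$, $\partial_{y_i}\tilde g$ and $x\,\partial_{u_k}\tilde g$ lie in $\Delta(I')$, and the $\partial_{u_1}$-quotient is an $\OO_{W'}$-combination of such elements. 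This yields $\Delta(I)'\subseteq\Delta(I')$ on the chart, and gluing over the charts and over the points $p$ finishes the proof. The only genuinely delicate point is the $\partial_{u_1}$ term: differentiating in the principal direction $x$ produces the extra \emph{Euler} summand $c\,\tilde g$ coming from the factor $x^c$, together with the correction $-\sum_j y_j\,\partial_{y_j}\tilde g$ arising from the non-linear coordinate change; the content of the lemma is exactly that these corrections stay inside $\Delta(I')$ and hence do not force a smaller control.
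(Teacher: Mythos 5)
Your proof is correct and takes essentially the same route as the paper's: the paper works with the logarithmic derivations $\eta'=x\,f^{\ast}(\eta)$, which have at most a simple pole along the exceptional divisor, and your chart formulas for $f^{\ast}(\partial_{u_i}g)$ are exactly the coordinate expression of the identity $x^{1-c}f^{\ast}(\eta(a))=\eta'\bigl(f^{\ast}(a)/x^{c}\bigr)$ (up to the Euler term $c\,\tilde g$, which lies in $I'\subseteq\Delta(I')$). If anything, your explicit treatment of the $\partial_{u_1}$ direction is more transparent than the paper's remark that one may arrange $\eta'(x)=0$, since you show directly that the extra summands $c\,\tilde g-\sum_j y_j\partial_{y_j}\tilde g$ are absorbed by $\Delta(I')$.
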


\begin{proof}
Let $f:W'\to W$ be the blowup.
Let $p'\in W'$ be a point on the exceptional divisor $D$, and set $p=f(p')$.
Let $x\in\OO_{W',p'}$ be a local equation of $D$.
Let $\eta\in\mathrm{Der}(\OO_{W,p})$ be a derivation.
Then $f^{\ast}(\eta)$ has at most a simple pole along $D$, so 
$\eta':=xf^{\ast}(\eta)$ is a derivation 
in $\mathrm{Der}(\OO_{W',p'})$.
It is tangential to $D$. If we replace $x$ by a suitable analytic 
generator of $D$, we may assume $\eta'(x)=0$.

The controlled transform of $(\Delta(I),c-1)$ is generated by elements
of the form $x^{1-c}\cdot f^\ast(\eta(a))$, with $a\in I$. We compute
\[ \frac{f^\ast(\eta(a))}{x^{c-1}}=
  \frac{f^\ast(\eta)(f^\ast(a))}{x^{c-1}}=
  \frac{\eta'(f^\ast(a))}{x^c}=
  \eta'\left(\frac{f^\ast(a)}{x^c}\right) , \]
hence $f^\ast(\eta(a))\in\Delta(I')$. Since the elements of form
$\eta(a)$ generate $I$ at $p$, it follows that 
$f^\ast(\Delta(I))\OC_{W',p'}\subseteq\Delta(I')\OC_{W',p'}$.
Completion is a faithfully exact functor,
hence the statement is also true for the ideals in the local rings.
\end{proof}

%-------------------------------------------------

\begin{lem}[Villamayor] \label{lem:later} 
Let $A$ be a Rees algebra and let $B$ be its differential closure.
Then the singular loci of $A$ and $B$ are equal. If $Z$ is a transversal
subvariety contained in the singular set, and $A'$ and $B'$ are the
transforms of $A$ and $B$, then $A'$ and $B'$ have the same differential
closure.
\end{lem}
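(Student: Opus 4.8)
The plan is to prove the two assertions separately. The equality of singular loci rests only on the elementary behaviour of the order function under differentiation, while the transformation statement is an iterated application of Giraud's Lemma~\ref{lem:im}.

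First I would treat the singular loci. Since $A\subseteq B=\widetilde A$ we have $A_i\subseteq B_i$ in every degree, hence $\ord_p(A_i)\ge\ord_p(B_i)$ at every point $p$, which gives $\sing(B)\subseteq\sing(A)$ at once. For the reverse inclusion, fix $p\in\sing(A)$, so $\ord_p(A_i)\ge i$ for all $i$, and let $K$ be a set of generating degrees of $A$, so that $B$ is generated by the ideals $\Delta^j(A_k)$ placed in degree $k-j$, for $k\in K$ and $j<k$. The elementary inequality $\ord_p(\Delta(I))\ge\ord_p(I)-1$, applied $j$ times, yields $\ord_p(\Delta^j(A_k))\ge\ord_p(A_k)-j\ge k-j$. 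Because $\ord_p(fg)\ge\ord_p(f)+\ord_p(g)$ in the regular local ring $\OO_{W,p}$, every generator of $B_m$ -- a product of sections of such ideals $\Delta^{j_t}(A_{k_t})$ with $\sum_t(k_t-j_t)=m$ -- has order at least $m$ at $p$. Thus $\ord_p(B_m)\ge m$ for all $m$, i.e. $p\in\sing(B)$, and the two loci coincide.

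For the second assertion, $A\subseteq B$ gives $A'\subseteq B'$ on transforms (the controlled transform is monotone in the ideal), whence $\widetilde{A'}\subseteq\widetilde{B'}$. The substance is the opposite inclusion, and for it I would show $B'\subseteq\widetilde{A'}$: since $\widetilde{A'}$ is already a Villamayor algebra, the smallest such algebra containing $B'$ is contained in $\widetilde{A'}$, giving $\widetilde{B'}\subseteq\widetilde{A'}$. Now $B$ is generated by the $\Delta^j(A_k)$, and because the controlled transform of a product is the product of the controlled transforms, $B'$ is generated by the controlled transforms of these generating ideals (taken with control $k-j$). I would bound each such generator by iterating Giraud: writing $A_k'$ for the controlled transform of $(A_k,k)$, Lemma~\ref{lem:im} gives that the controlled transform of $(\Delta(A_k),k-1)$ lies in $\Delta(A_k')$; feeding this back into the lemma and using the monotonicity $I\subseteq J\Rightarrow\Delta(I)\subseteq\Delta(J)$ yields, by induction on $j$, that the controlled transform of $(\Delta^j(A_k),k-j)$ is contained in $\Delta^j(A_k')$. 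At each stage the center $Z\subseteq\sing(A)=\sing(B)$ lies in the singular locus of the relevant controlled ideal, by the order inequality above, so Giraud applies. Finally $A_k'$ is a generator of $A'$ in degree $k$ and $\widetilde{A'}$ is differentially closed, so $\Delta^j(A_k')\subseteq(\widetilde{A'})_{k-j}$; hence every generator of $B'$ lies in $\widetilde{A'}$, giving $B'\subseteq\widetilde{A'}$ and therefore $\widetilde{B'}\subseteq\widetilde{A'}$. Combining the two inclusions proves $\widetilde{A'}=\widetilde{B'}$.

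The main obstacle is the bookkeeping in the inductive step of the second part: one must keep the control dropping by exactly one with each derivative so that it stays synchronized with the degree shift $k\mapsto k-j$, and at each iteration one must combine $(\Delta(A_k))'\subseteq\Delta(A_k')$ with the monotonicity of $\Delta$ to keep the containment in a form to which Giraud's Lemma~\ref{lem:im} can again be applied. By contrast the equality of singular loci and the reduction of $\widetilde{B'}\subseteq\widetilde{A'}$ to $B'\subseteq\widetilde{A'}$ are routine; all the depth is concentrated in Lemma~\ref{lem:im} and its iteration.
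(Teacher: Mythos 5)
Your proposal is correct and follows essentially the same route as the paper: the singular loci are compared via $A\subseteq B$ in one direction and the order drop under $\Delta$ in the other, and the transform statement is reduced to $B'\subseteq\widetilde{A'}$ and proved by induction on the number of derivatives through iterated application of Giraud's Lemma~\ref{lem:im}. The only cosmetic difference is that you carry the containment in the explicit form $\Delta^j(A_k')$ and invoke differential closedness of $\widetilde{A'}$ once at the end, whereas the paper absorbs it into $C'_i=(\widetilde{A'})_i$ at each inductive step.
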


\begin{proof} 
The singular locus is equal to the intersection of all zero sets of
all $(i-1)$-st derivatives of $A_i$, for $i>0$. But these are all in $B_1$,
hence the singular locus of $B$ is contained in the singular locus of $A$.
The other direction is obvious, because $A$ is a subset of $B$.

Let $f:W'\to W$ be the blowup along $Z$.
The differential closure of $A'$ is clearly contained in the differential
closure of $B'$.
For the converse, it suffices to show that $B'$ is contained in the
differential closure of $A'$. Let us denote the differential closure
of $A'$ by $C'$. 

We prove by induction over $j$ that for all $i>0$, the ideal of
the controlled transform of $\Delta^j(A_{i+j})$ with control $i$ 
is contained in $C'_i$. 
Since $B_i$ is generated by the ideals 
$\Delta^j(A_{i+j})$, $j\ge 0$, it will follow that $B'\subseteq C'$.

The case $j=0$ is obvious. Assume $j>0$. Let $i>0$, and set 
$I=\Delta^{j-1}(A_{i+j})$, with control $i+1$. By induction hypothesis, 
the transform $I'$ of $I$ with control $i+1$ is contained in $C'_{i+1}$.
By Lemma~\ref{lem:im}, the controlled transform of $\Delta(I)$ with control $i$ 
is contained
in $\Delta(C'_{i+1})$. And $\Delta(C'_{i+1})\subseteq C'_i$, because
$C'$ is differentially closed. 
\end{proof}

%-------------------------------------------------

\begin{prop} \label{prop:trf}
If $(A,d)$ is a gallimaufry, and $Z$ is a transversal subvariety of $(W,E)$
in the singular locus of $(A,d)$, and $\widetilde A'=\widetilde {(A')}$ is the differential closure of
the transform $A'$ of $A$ under the blowup of $W$ along $Z$, then $(\widetilde A',d)$ is again a gallimaufry. 
\end{prop}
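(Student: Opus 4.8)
The plan is to verify directly the two defining conditions of a gallimaufry for the pair $(\widetilde A',d)$: that $\widetilde A'$ is a Villamayor algebra, and that every point of its singular locus admits a zoom of dimension $d$. The first condition is immediate, since $\widetilde A'$ is by construction a differential closure, hence differentially closed; and the dimension bound $0\le d\le\dim(W')$ persists because the blowup of a manifold along a transversal center is a manifold of the same dimension, so $\dim(W')=\dim(W)$, while $d$ is unchanged. All the work therefore lies in producing a zoom at each singular point after blowup.

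Fix a point $p'$ in the singular locus of $\widetilde A'$, let $f\colon W'\to W$ be the blowup, and set $p=f(p')$. By Lemma~\ref{lem:later} the singular loci of $\widetilde A'$ and of the transform $A'$ coincide, so $p'\in\sing(A')$. I first claim $p\in\sing(A)$: if $p'$ lies off the exceptional divisor $D$ then $f$ is a local isomorphism near $p'$ and orders are preserved in every degree, whereas if $p'\in D$ then $p\in Z\subseteq\sing(A)$ by hypothesis. Since $(A,d)$ is a gallimaufry, $p$ has a neighborhood $U$ and a zoom $V\subseteq U$: a transversal subvariety of dimension $d$, not contained in any hypersurface of $E$, with $A(V)\subseteq A$ in $U$. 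Because $A(V)\subseteq A$, Lemma~\ref{lem:bold} gives $\sing(A)\subseteq\sing(A(V))=V$, so in particular $Z\cap U\subseteq V$.

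The candidate zoom at $p'$ is the strict transform $V'$ of $V$, and I split into two cases according to how $Z$ sits inside $V$ near $p$. If $Z$ coincides with $V$ as germs at $p$, then by Lemma~\ref{lem:bold} the transform of $A(V)$ is the trivial algebra; as this algebra is contained in the transform $A'$ and has unit degree-one component, the degree-one component of $A'$ is the unit ideal near $D$ over $p$, so its order there is $0<1$ and $\sing(A')$ is empty near such points. Hence no point $p'$ of this type occurs and there is nothing to prove. Otherwise $Z\cap U$ is a proper subvariety of $V$ (this includes the case $p\notin Z$), and Lemma~\ref{lem:bold} identifies the transform of $A(V)$ with $A(V')$. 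Since $A(V)\subseteq A$ holds degree-wise, so do their controlled transforms, giving $A(V')\subseteq A'\subseteq\widetilde A'$. This already supplies two of the required properties: the zoom inclusion $A(V')\subseteq\widetilde A'$ near $p'$, and $p'\in\sing(A')\subseteq\sing(A(V'))=V'$, so that $V'$ passes through $p'$.

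It remains to check that $V'$ is a legitimate zoom, namely a transversal subvariety of $(W',E')$ of dimension $d$ not contained in any hypersurface of $E'=\{E_i'\}\cup\{D\}$. The dimension is $d$ because $V'$ is the strict transform of the $d$-dimensional $V$ along a proper center; moreover $V'\not\subseteq D$, since $V'$ is the closure of the isomorphic preimage of $V\setminus Z$, which avoids $D$, and likewise $V'\not\subseteq E_i'$ because $V\not\subseteq E_i$ already off $Z$. The one genuinely technical point, which I expect to be the main obstacle, is the transversality of $V'$ to $(W',E')$: one chooses at $p$ a single system of regular parameters adapting simultaneously the nested transversal subvarieties $Z\subseteq V$ and the components of $E$ through $p$, so that the blowup becomes monomial in these coordinates, and then reads off in each affine chart that the strict transform $V'$ is again cut out by a subset of the transformed parameters while every component of $E'$ is a coordinate hyperplane. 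This is the same kind of local normal-form computation already invoked in the proof of Lemma~\ref{lem:rescom}, and together with the earlier proposition that the blowup of a habitat along a transversal center is again a habitat, it completes the verification that $V'$ is a zoom for $(\widetilde A',d)$ at $p'$.
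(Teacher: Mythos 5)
Your argument is correct and follows exactly the route of the paper's (one-sentence) proof: the strict transform of a zoom at $f(p')$ is a zoom at $p'$. You simply supply the details the paper declares ``clear'' --- the reduction to $p\in\sing(A)$, the case distinction $Z=V$ versus $Z\subsetneq V$ via Lemma~\ref{lem:bold}, and the transversality of $V'$ --- and your treatment of the remaining local normal-form computation is at least as careful as what the paper itself offers.
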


\begin{proof}
Let $f:W'\to W$ be the blowup. Let $p'\in W'$. Then it is clear that
the strict transform of any zoom at $f(p')$ is a zoom at $p'$.
\end{proof}

%-------------------------------------------------

\subsection{Restriction.} 

An important property of Villamayor algebras is their ``stability'' under restriction. This is made precise in the following statement.

\begin{thm} \label{thm:aus}
Let $(W,E)$ be a habitat and let $V\subset W$ be a transversal subvariety.
Then the restriction operator from Villamayor algebras on $W$ containing $A(V)$ to Villamayor algebras on $V$ is bijective.

The inverse operator -- called extension -- can be constructed in two ways. 
For the first construction, we assume that we have a left inverse $\beta:W\to V$ 
to the inclusion map $i:V\to W$. Then the extension of $A$ from $V$ to $W$ is equal to
$\beta^\ast(A)+A(V)$.

The second construction is to take the interior $i_\ast(A)^\circ$ of $i_\ast(A)$.
\end{thm}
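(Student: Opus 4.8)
The plan is to reduce the whole statement to a single local \emph{reconstruction formula} recovering $A$ from $A|_V$, and then to assemble the proof as four steps: restriction is well defined into Villamayor algebras on $V$; it is injective (local uniqueness of the extension); it is surjective (existence of the extension by gluing); and the two displayed formulas both compute this inverse. I would work locally throughout, passing to the completion $\OC_{W,p}$ whenever a Taylor series must converge and descending again to $\OO_{W,p}$ by faithful flatness exactly as in the proof of Lemma~\ref{lem:im}. Near a point of $V$ choose regular parameters $x_1,\dots,x_c$ ($c=\dim W-\dim V$) cutting out $V$ and $y_1,\dots,y_d$ ($d=\dim V$); given a retraction $\beta$ one may take $y_l=\beta^\ast(y_l|_V)$, so that $\beta$ becomes the projection $(x,y)\mapsto(0,y)$ and $\beta^\ast$ is ``set $x=0$''. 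Write $I_V\subset\OO_W$ for the ideal of $V$, so that $A(V)_k=I_V^{\,k}$.

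Well-definedness and injectivity both rest on differential closedness. For the former, each $\partial/\partial y_l$ is a derivation of $\OO_W$ tangent to $V$, so $\partial_{y_l}(f|_V)=(\partial_{y_l}f)|_V\in(A|_V)_{i-1}$ for $f\in A_i$; as these generate the derivations of $\OO_V$, $A|_V$ is again differentially closed. The heart is the formula
\[
  A_i=\sum_{j+k=i}\beta^\ast\big((A|_V)_j\big)\cdot I_V^{\,k}=\big(\beta^\ast(A|_V)+A(V)\big)_i .
\]
The inclusion $\subseteq$ I would get by Taylor expanding $f\in A_i$ in the $x$-variables: the coefficient of $x^\alpha$ is $\tfrac1{\alpha!}\beta^\ast\big((\partial_x^\alpha f)|_V\big)$, and $(\partial_x^\alpha f)|_V\in(A|_V)_{i-|\alpha|}$ because $\partial_x^\alpha f\in\Delta^{|\alpha|}(A_i)\subseteq A_{i-|\alpha|}$; the terms with $|\alpha|\le i$ lie in the stated sum, and the convergent tail lies in the closed ideal $I_V^{\,i+1}\subseteq I_V^{\,i}$, hence again in the sum. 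The inclusion $\supseteq$ reduces to $\beta^\ast\big((A|_V)_j\big)\subseteq A_j$, which I would prove by induction on $j$: for $\bar g=g|_V$ with $g\in A_j$, the same expansion gives $\beta^\ast\bar g=g-\sum_{|\alpha|\ge1}\tfrac1{\alpha!}\beta^\ast\big((\partial_x^\alpha g)|_V\big)\,x^\alpha$; since $(\partial_x^\alpha g)|_V\in(A|_V)_{j-|\alpha|}$ with $j-|\alpha|<j$, the inductive hypothesis puts $\beta^\ast\big((\partial_x^\alpha g)|_V\big)\in A_{j-|\alpha|}$, and multiplication by $x^\alpha\in I_V^{|\alpha|}\subseteq A_{|\alpha|}$ sends the sum into $A_j$; as $g\in A_j$ too, $\beta^\ast\bar g\in A_j$. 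This yields $A=\beta^\ast(A|_V)+A(V)$ locally; choosing coordinates adapted to a globally given $\beta$ promotes this to the first construction and, since two Villamayor algebras containing $A(V)$ with equal restriction now coincide, proves injectivity.

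For surjectivity I would glue. Starting from a Villamayor algebra $B$ on $V$, local retractions exist because transversality makes $V$ a coordinate subspace locally; the local algebras $\beta_\alpha^\ast(B)+A(V)$ are Villamayor, restrict to $B$, and agree on overlaps by the local uniqueness just proved, so they glue over a neighborhood of $V$; off $V$ one has $I_V=\OO_W$, which forces every algebra containing $A(V)$ to be the full algebra $\OO_W[T]$, matching on the overlap. This produces a global Villamayor algebra restricting to $B$, so restriction is bijective. To identify the second construction, note $i_\ast(A)$ is the Rees algebra with $j$-th component $\{f\in\OO_W:f|_V\in A_j\}$ (a Rees algebra since restriction is a ring map, and its own restriction equals $A$), while $i_\ast(A)^\circ$ is by definition its largest Villamayor subalgebra. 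Now $A(V)$ is a Villamayor algebra contained in $i_\ast(A)$, and the unique extension $\widehat A$ built above is Villamayor, contains $A(V)$, and satisfies $\widehat A\subseteq i_\ast(A)$ because $\widehat A|_V=A$; hence $A(V)\subseteq\widehat A\subseteq i_\ast(A)^\circ\subseteq i_\ast(A)$, and restricting squeezes $A=\widehat A|_V\subseteq i_\ast(A)^\circ|_V\subseteq A$. Thus $i_\ast(A)^\circ$ is a Villamayor algebra containing $A(V)$ and restricting to $A$, so it equals $\widehat A$ by uniqueness; unlike the first, this construction needs no global retraction.

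The main obstacle is the reconstruction formula, and specifically the inclusion $\supseteq$: it is here that differential closedness is used at full strength, through all transverse derivatives $\partial_x^\alpha$, and one must control the infinite Taylor tail. I expect the only genuine care to lie in the passage to $\OC_{W,p}$ to make the series converge and the descent back by faithful flatness, and in verifying that changing coordinates so as to turn a given $\beta$ into a projection leaves the hypothesis $A(V)\subseteq A$ intact — which it does, $I_V$ being intrinsic to $V$.
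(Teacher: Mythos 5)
Your proof is correct and follows essentially the same route as the paper's: both reduce to completed stalks (descending by faithful flatness), use the Taylor expansion of a section in the transverse variables to show that $a-\beta^\ast(a|_V)$ lies in $\sum_{j\ge 1} I(V)^j\Delta^j(\cdot)$, and run an induction on the degree using differential closedness to establish the reconstruction $A=\beta^\ast(A|_V)+A(V)$ and to identify $i_\ast(A)^\circ$ with this extension. The only organizational difference is that you obtain the global statement by gluing the local extensions via the uniqueness just proved, whereas the paper leans on the retraction-free, canonical nature of the interior construction $i_\ast(A)^\circ$ to globalize.
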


\begin{proof}
The statement is local, hence it suffices to show it on the stalks at
some point $p\in V$. Even more, we may pass to the completion, because
completion preserves equality of ideals, because completion of local rings is 
faithfully exact. We will show that both constructions
above are inverse to the restriction. This also shows that the second
construction is an inverse in the Zariski-local case, when there is not
necessarily a left inverse $\beta$.

Set $B=\beta^\ast(A)+A(V)$ and $C=i_\ast(A)^\circ$.
We can choose a system of regular local parameters such that
$V$ is the zero set of a subset of these parameters, and the images
of $\beta^\ast$ are constant on this subset of parameters. 
Then $B$ is a Villamayor algebra, and $B|_V=A$. Hence the first
construction is a right inverse operator for restriction.

Since $B$ is a Villamayor algebra contained in $i_\ast(A)$, we
also have $B\subseteq C$.
Let $T$ be a Villamayor algebra contained in $i_\ast(A)$.
We prove that $T_r\subseteq B_r$ for all $r\ge 0$, by induction on $r$.
For $r=0$, the statement is trivially true.
Let $r>0$. Let $a$ be an element of $T_r$. Then 
$a-\beta^\ast(i_\ast(a))$ is a sum of elements in 
$I(V)^j\Delta^j(T_r)$ for $j=1,\dots,r$ and $I(V)$ the ideal defining $V$ in $W$, by Taylor expansion
in the variables vanishing along $V$.
This sum is in $B_r$, because $B$ is a Villamayor algebra and
by induction hypothesis. But $\beta^\ast(i_\ast(a))$ is also
in $B_r$, hence $a\in B_r$. It follows that $T\subseteq B$
and consequently $B=C$.

Now let $T$ be a Villamayor algebra on $W$ such that $T|_V=A$.
Then $T\subseteq i_\ast(A)$, and it follows that $T\subseteq C$.
We prove that $\beta^\ast(A_r)\subseteq T_r$ for all $r\ge 0$, 
by induction on $r$.
For $r=0$, the statement is trivially true.
Let $r>0$. Let $b$ be an element of $A_r$. Because $T|_V=A$,
there is an element $a\in T_r$ such that $i_\ast(a)=b$. 
By Taylor expansion again, $a-\beta^\ast(b)$ is a sum of elements
in $I(V)^j\Delta^j(T_r)$ for $j=1,\dots,r$. This sum is
in $T_r$, and therefore $\beta^\ast(b)\in T_r$. 
It follows that $T=B=C$. This shows that the second construction
is a left inverse operator for restriction.
\end{proof}

%-------------------------------------------------

The theorem shows that the Villamayor algebra on $W$ does not carry more
information than its restriction to a subvariety.
A consequence is that different choices of subvarieties lead to isomorphic
restrictions. The following theorem is inspired by \cite{Wlodarczyk:05},
where a similar statement is shown for hypersurfaces.

%-------------------------------------------------

\begin{thm} \label{thm:wlo}
Let $W$ be a manifold. Let $A$ be a Villamayor algebra on $W$.
Let $V_j\subseteq W$, $j=1,2$, be two submanifolds of the same dimension.
Assume that there exist left inverses $\beta_j:W\to V_j$
for the inclusion maps $i_j:V_j\to W$, together with
isomorphisms $\phi:V_1\to V_2$ and $\psi:W\to W$ such that
$i_2\circ \phi=\psi\circ i_1$ and $\phi\circ\beta_1=\beta_2\circ\psi=\beta_2$.
Then $\phi^\ast(A|_{V_2})=A|_{V_1}$ and $\psi^\ast(A)=A$.
\end{thm}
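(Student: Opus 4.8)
The plan is to prove the two assertions in the order $\phi^\ast(A|_{V_2})=A|_{V_1}$ first and $\psi^\ast(A)=A$ second, since the second will follow formally from the first together with the compatibility hypotheses on $\beta_1,\beta_2,\psi$. Throughout I use that both restrictions $A|_{V_1}$ and $A|_{V_2}$ are defined, i.e.\ $A(V_1)\subseteq A$ and $A(V_2)\subseteq A$, and that by Theorem~\ref{thm:aus} the algebra $A$ is recovered from each of its restrictions by the first extension formula: $A=\beta_1^\ast(A|_{V_1})+A(V_1)=\beta_2^\ast(A|_{V_2})+A(V_2)$. The only genuinely geometric input will be two identities between the structure maps; everything else is functoriality and monotonicity of pullback and restriction.

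First I would record the two key compositions. Since $\beta_1\circ i_1=\mathrm{id}_{V_1}$ and $\beta_2=\phi\circ\beta_1$, we get $\beta_2\circ i_1=\phi\circ\beta_1\circ i_1=\phi$. Since $\beta_2\circ i_2=\mathrm{id}_{V_2}$ and again $\beta_2=\phi\circ\beta_1$, we get $\phi\circ(\beta_1\circ i_2)=\mathrm{id}_{V_2}$, hence $\beta_1\circ i_2=\phi^{-1}$. Now I apply $i_1^\ast$ to the inclusion $A\supseteq\beta_2^\ast(A|_{V_2})$ and use $i_1^\ast\beta_2^\ast=(\beta_2\circ i_1)^\ast=\phi^\ast$, obtaining $A|_{V_1}\supseteq\phi^\ast(A|_{V_2})$. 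Symmetrically, from $A\supseteq\beta_1^\ast(A|_{V_1})$ and $i_2^\ast\beta_1^\ast=(\beta_1\circ i_2)^\ast=(\phi^{-1})^\ast$ I get $A|_{V_2}\supseteq(\phi^{-1})^\ast(A|_{V_1})$; applying the isomorphism $\phi^\ast$ to this inclusion yields $\phi^\ast(A|_{V_2})\supseteq A|_{V_1}$. The two inclusions combine to the desired equality $\phi^\ast(A|_{V_2})=A|_{V_1}$.

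For the second assertion I would compute $\psi^\ast(A)$ directly from $A=\beta_2^\ast(A|_{V_2})+A(V_2)$. Since $\beta_2\circ\psi=\beta_2$, we have $\psi^\ast\beta_2^\ast=(\beta_2\circ\psi)^\ast=\beta_2^\ast$, so the first summand is fixed; and since $\psi$ is an automorphism with $\psi(V_1)=V_2$ (read off from $\psi\circ i_1=i_2\circ\phi$), we have $\psi^{-1}(V_2)=V_1$ and therefore $\psi^\ast(A(V_2))=A(V_1)$, because $\psi^\ast$ carries the degree-one generator $I(V_2)$ of $A(V_2)$ to $I(\psi^{-1}(V_2))=I(V_1)$. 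Hence $\psi^\ast(A)=\beta_2^\ast(A|_{V_2})+A(V_1)$. On the other hand, substituting the first assertion into $A=\beta_1^\ast(A|_{V_1})+A(V_1)$ gives $A=\beta_1^\ast(\phi^\ast(A|_{V_2}))+A(V_1)=(\phi\circ\beta_1)^\ast(A|_{V_2})+A(V_1)=\beta_2^\ast(A|_{V_2})+A(V_1)$, which is exactly $\psi^\ast(A)$. Thus $\psi^\ast(A)=A$.

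I expect the main obstacle to be the first assertion, and specifically the realization that a single extension formula only yields one inclusion, accompanied by a spurious summand $i_1^\ast(A(V_2))$ (respectively $i_2^\ast(A(V_1))$) that does not obviously vanish. The decisive point is that the two extension formulas of Theorem~\ref{thm:aus} are available simultaneously and, once restricted, produce opposite inclusions; combining the $V_1\to V_2$ and $V_2\to V_1$ directions forces equality and renders the troublesome extra terms irrelevant. The remaining points are routine: that restriction (image under the surjective ring homomorphism $i_j^\ast$) is monotone under inclusion of algebras, that pullbacks compose contravariantly, and that the isomorphism $\psi^\ast$ sends generators of $A(V_2)$ to generators of $A(V_1)$. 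As in Theorem~\ref{thm:aus} one may pass to completed local rings to argue in coordinates, but the global left inverses $\beta_1,\beta_2$ make this unnecessary here.
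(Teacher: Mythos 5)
Your proof is correct, and it rests on the same two pillars as the paper's own argument -- the extension formula $A=\beta_j^\ast(A|_{V_j})+A(V_j)$ from Theorem~\ref{thm:aus} and the composition identities among $i_j$, $\beta_j$, $\phi$, $\psi$ -- but it assembles them in the reverse logical order. The paper first shows $\psi^\ast(A)=A$ by applying $\psi^\ast$ to the description of $A$ as the algebra generated by $\beta_2^\ast(A|_{V_2})$ together with $I(V_2)$ in degree one, using $\psi^\ast\circ\beta_2^\ast=\beta_2^\ast$, and then obtains $\phi^\ast(A|_{V_2})=A|_{V_1}$ as a one-line consequence of $\phi^\ast\circ i_2^\ast=i_1^\ast\circ\psi^\ast$, which follows from $i_2\circ\phi=\psi\circ i_1$. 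You instead prove the statement on restrictions first, by restricting each of the two extension formulas along the other inclusion and exploiting $\beta_2\circ i_1=\phi$ and $\beta_1\circ i_2=\phi^{-1}$ to get the two opposite inclusions; the statement on $\psi$ then follows by direct computation. Your ordering costs a little more writing on the first assertion, but it buys extra care at exactly the point where the paper is terse: since $\psi^{-1}(V_2)=V_1$, one has $\psi^\ast(A(V_2))=A(V_1)$ rather than $A(V_2)$, so the computed expression $\psi^\ast(A)=\beta_2^\ast(A|_{V_2})+A(V_1)$ still has to be matched with $A$; your substitution of $\phi^\ast(A|_{V_2})=A|_{V_1}$ into $A=\beta_1^\ast(A|_{V_1})+A(V_1)$ closes that step explicitly, whereas the paper's computation identifies the two sides without comment. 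Both arguments are sound; yours is marginally more self-contained.
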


\begin{proof}
We first observe that $\psi^\ast(A)$ is the algebra generated
by $\psi^\ast(\beta_2^\ast(A|_{V_2}))$ and $I(V_2)$ in degree~1. But
$\psi^\ast\circ\beta_2^\ast=\beta_2^\ast$, hence this is the algebra
generated by $\beta_2^\ast(A|_{V_2})$ and $I(V_2)$ in degree~1, and this
is exactly $A$. Then it follows also that
$\phi^\ast(A|_{V_2})=\phi^\ast({i_2}_\ast(A))={i_1}_\ast(\psi^\ast(A))=%
{i_1}_\ast(A)=A|_{V_1}$.
\end{proof}

%-------------------------------------------------

\begin{rem}
The maps $\beta_j$, $\phi$ and $\psi$ in Theorem~\ref{thm:wlo} need not exist in general,
even in the Zariski-local case. But they always
exist in the \'etale topology, i.e., we have such left inverses
and isomorphisms for the spectra of complete local rings.
Therefore, we will use the result only for proving
that certain algebras are equal after completion, but never for their construction.
\end{rem}

%-------------------------------------------------

\subsection{Orders.} 

The stability of Villamayor algebras under restriction allows to define order functions by their restriction to zooms.

\begin{defn} \label{def:ord}
Let $I$ be an ideal in $W$ with control $c$ and stalk $I_p$ at $p\in W$, let $A$ be a Rees algebra in $W$, and let $(A,d)$ be a gallimaufry in $(W,E)$. Define {\em order functions}  with values in $\Q\cup\{+\infty\}$ by 

\[\ord_{I,c}(p):={\frac{\ord_p(I_p)}{c}}, \]

\[\ord_A(p):=\min_{i>0}{\ord_{A_i,i}(p)},\] 

\[\ord_{A,d}(p)=\ord_B(p),\]

\noindent where $\ord_p$ denotes the order of ideals in the local ring $\OO_{W,p}$, and where $B$ is the restriction of $A$ to a zoom at $p$. The first two orders are defined on $W$, the third on the singular locus of $(A,d)$.
\end{defn}

%-------------------------------------------------

\begin{rem} \label{rem:ord} The next proposition guarantees that the
definitions make sense.
The order of an ideal is $\infty$ at $p$ if and only if 
the local ideal at $p$ is zero.
The order function of a gallimaufry is $\infty$ at $p$ if and only if
the algebra $A$ is locally equal to $A(V)$, for one or equivalently
for any zoom $V$ at $p$.
\end{rem}

%-------------------------------------------------

\begin{prop} \label{prop:ord}
Equivalent, respectively associated ideals with control and Rees algebras have the same order function.

The minimum in the definition of the order of a Rees
algebra is attained for a degree $i$ in some set of generating degrees.

If $B$ is the differential closure of $A$, then $\ord_A(p)=\ord_B(p)$
for all $p$ in the common singular locus of $A$ and $B$.

If $V_1,V_2$ are two zooms, and $B_1:=A|_{V_1}$ and $B_2:=A|_{V_2}$
are the two restrictions, then $\ord_{B_1}(p)=\ord_{B_2}(p)$ for all
$p$ in the singular locus of $(A,d)$.

The order function of a gallimaufry has values $\ge 1$. The denominator of any such value belongs to a set of generating degrees.
\end{prop}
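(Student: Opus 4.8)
The plan is to prove the five assertions of Proposition~\ref{prop:ord} in order, since each relies on the definitions and on results established earlier in this section (Lemmas~\ref{lem:rescom} and \ref{lem:later}, Theorem~\ref{thm:wlo}). The unifying idea is that orders are defined locally, so throughout we may pass to the stalk at a point $p$ and, where convenient, to the completion $\OC_{W,p}$.

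\medskip

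\textbf{Invariance under equivalence and association.} For ideals with control, if $I_1^{kc_2}=I_2^{kc_1}$ then $k c_2\,\ord_p(I_1)=k c_1\,\ord_p(I_2)$, so $\ord_p(I_1)/c_1=\ord_p(I_2)/c_2$, giving equality of $\ord_{I,c}$. For Rees algebras, equivalence $A_{ki}=B_{ki}$ forces $\ord_{A_{ki},ki}=\ord_{B_{ki},ki}$ for all $i$, and since the minimum over $i>0$ can be computed along the subsequence of multiples of $k$ (the order is a minimum, so passing to a cofinal subset does not change it once one checks the omitted degrees cannot give a smaller value), one gets $\ord_A=\ord_B$. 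Association $A_{kci}=I^{ki}$ is handled the same way, matching the degree-$kci$ component of $A$ with the $i$-th power of $I$.

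\medskip

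\textbf{The minimum is attained in a generating degree.} Here I would use that if $K$ is a set of generating degrees, then every $A_i$ is generated by products $B_{k_1}\cdots B_{k_m}$ with $k_1+\dots+k_m=i$ and $k_j\in K$. The order of such a product at $p$ is $\sum_j \ord_p(B_{k_j})$, so $\ord_{A_i,i}(p)=\ord_p(A_i)/i \ge \min_{k\in K}\ord_p(A_k)/k$ by a weighted-average (convexity) argument: dividing $\sum_j\ord_p(B_{k_j})$ by $\sum_j k_j$ cannot drop below the smallest ratio $\ord_p(B_k)/k$. Hence the minimum over all $i$ equals the minimum over $k\in K$, and in particular it is attained on $K$.

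\medskip

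\textbf{Differential closure and independence of the zoom.} For $\ord_A=\ord_B$ with $B=\widetilde A$, I would invoke Lemma~\ref{lem:later}: $A$ and $B$ share the same singular locus, and $A\subseteq B$ gives $\ord_A\ge\ord_B$ immediately; for the reverse inequality one notes that the extra generators of $B$ are derivatives $\Delta^j(A_k)$ placed in degree $k-j$, and a derivative drops the order by exactly one while the degree also drops by one, so these generators do not lower the order ratio below $\ord_A(p)$. The independence of the zoom is where Theorem~\ref{thm:wlo} enters: two zooms $V_1,V_2$ are, after passing to the completion, related by isomorphisms $\phi,\psi$ satisfying the hypotheses of that theorem (such \'etale-local left inverses always exist by the remark following it), whence $\phi^\ast(A|_{V_2})=A|_{V_1}$, and an isomorphism preserves orders, giving $\ord_{B_1}(p)=\ord_{B_2}(p)$.

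\medskip

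\textbf{Lower bound and denominator.} Finally, the order of a gallimaufry is $\ge 1$ because $p$ lies in the singular locus, meaning each $A_i$ has order $\ge i$, so every ratio $\ord_p(A_i)/i\ge 1$; restricting to a zoom preserves this, since restriction is the bijection of Theorem~\ref{thm:aus} and $A(V)\subseteq A$ controls the drop. The denominator statement follows from the fact that the minimum is attained in a generating degree $k$, so $\ord_{A,d}(p)=\ord_p((A|_V)_k)/k$ has denominator dividing $k$.

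\medskip

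I expect the main obstacle to be the independence of the order from the choice of zoom: unlike the other four claims, which are direct manipulations of orders under algebraic operations, this one genuinely requires the structural comparison theorem and a careful passage to the completion, since the maps $\phi,\psi$ exist only in the \'etale topology. The remaining assertions are largely bookkeeping with orders of products and derivatives.
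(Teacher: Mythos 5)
Your proposal is correct and, modulo phrasing, follows the same route as the paper's (very terse) proof: the first two assertions are elementary manipulations, the zoom-independence is Theorem~\ref{thm:wlo} after passing to the completion, and the last assertion is derived from the second and third. Two points deserve to be made explicit, because each is exactly where the content of the argument sits.

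First, in the equivalence/association step you pass the minimum to the cofinal set of multiples of $k$ and parenthetically defer the check that omitted degrees cannot give a smaller value. That check is not automatic for minima; it follows from multiplicativity of the Rees algebra, $A_i^k\subseteq A_{ki}$, whence $\ord_p(A_{ki})\le k\,\ord_p(A_i)$ and so $\ord_{A_{ki},ki}(p)\le\ord_{A_i,i}(p)$ --- the same mediant inequality you use for the second assertion. Second, and more importantly, for the differential closure you assert that since a derivative drops the order by one while the degree drops by one, the generators $\Delta^j(A_k)$ in degree $k-j$ ``do not lower the order ratio.'' Decrementing numerator and denominator by the same amount \emph{decreases} a ratio whenever that ratio exceeds $1$ is false --- the correct comparison is $(a-j)/(k-j)\ge q$ given $a/k\ge q$, which holds if and only if $q\ge 1$. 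This is precisely the hypothesis that $p$ lies in the common singular locus, and it is the one observation the paper's proof makes explicitly (there phrased as: $qi>j$ implies $q(i+k)>j+k$ when $q\ge1$, via the zero-set description of the level sets of the order). Your argument via orders of generators is an equivalent reformulation of the paper's argument via zero sets of the $\Delta^j(A_i)$, but you should state where the singular-locus hypothesis enters; without it the step fails.
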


\begin{proof}
The first two statements are straightforward. The third statement follows from
the fact that the set of all points $p$ with order at least some rational
number $q$ is equal to the intersection of the zero sets of the
ideals $\Delta^j(A_i)$ with $qi>j$. For the order function of $B$,
we have to take all ideals of the form $\Delta^j(B_i)$ with $q i>j$,
or, equivalently, all ideals of the form $\Delta^{j+k}(A_{i+k})$
with $i>q j$ and $k\ge 0$, because $B_i$ is the sum of
all $\Delta^k(A_{i+k})$, $k>0$. But if $q\ge 1$, the inequality
$q i>j$ implies $q (i+k)>(j+k)$, hence the two intersections are equal.

The fourth statement is an immediate consequence of Theorem~\ref{thm:wlo}.

The last statement is a consequence of the second and third statement.
\end{proof}

%-------------------------------------------------

\begin{rem}
By Lemma~\ref{lem:later}, any set of generating degrees for $A$ as a Villamayor algebra
is also a set of generating degrees for $A'$ as a Villamayor algebra (but in general
not as a Rees algebra). As a consequence, the maximum of the order
function cannot drop infinitely often.
\end{rem}

%-------------------------------------------------

\begin{defn}
A Rees algebra or a gallimaufry is called {\em tight} if its order
function is $\le 1$; in the gallimaufry case, this implies that it is equal to $1$.
\end{defn}

\begin{defn}
If $(A,d)$ is a gallimaufry in a habitat $(W,E)$ so that $(A,d-1)$ is again a gallimaufry in $(W,E)$, then $(A,d-1)$ is called the {\em descent} of $(A,d)$.
\end{defn}

%-------------------------------------------------

The next lemma is an immediate consequence of Proposition~\ref{prop:trf}.

\begin{lem} \label{lem:comm}
If $(A,d-1)$ is the descent of $(A,d)$, and $\widetilde A'$ is the differential closure
of the transform $A'$ of $A$ under a blowup of $W$ along a transversal center in the
singular locus, then $(\widetilde A',d-1)$ is the descent of $(\widetilde A',d)$.
\end{lem}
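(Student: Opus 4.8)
The plan is to unwind the definition of descent and reduce everything to two applications of Proposition~\ref{prop:trf}. Recall that $(\widetilde A',d-1)$ being the \emph{descent} of $(\widetilde A',d)$ means precisely that both $(\widetilde A',d)$ and $(\widetilde A',d-1)$ are gallimaufries in the blown-up habitat. So the entire task splits into verifying these two gallimaufry conditions separately, and each is exactly the conclusion of Proposition~\ref{prop:trf} in the appropriate dimension.

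The crucial observation that makes this work is that the objects entering Proposition~\ref{prop:trf} -- the singular locus, the transform $A'$, and its differential closure $\widetilde A'$ -- depend only on the Villamayor algebra $A$ and the center $Z$, and not at all on the integer $d$. In particular, by the convention that the singular locus of a gallimaufry $(A,d)$ is defined to be the singular locus of $A$, the singular loci of $(A,d)$ and of $(A,d-1)$ coincide. Hence the transversal center $Z$, which lies in the singular locus of $(A,d)$ by hypothesis, automatically lies in the singular locus of $(A,d-1)$ as well, so the hypotheses of Proposition~\ref{prop:trf} are met for both dimensions simultaneously.

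With this in hand I would argue as follows. Since $(A,d-1)$ is by assumption the descent of $(A,d)$, both $(A,d)$ and $(A,d-1)$ are gallimaufries. Applying Proposition~\ref{prop:trf} to $(A,d)$ with center $Z$ yields that $(\widetilde A',d)$ is a gallimaufry; applying it again, now to $(A,d-1)$ with the same center $Z$, yields that $(\widetilde A',d-1)$ is a gallimaufry. Because $A'$ and hence $\widetilde A'$ are literally the same algebra in both applications, the two conclusions concern the \emph{same} differentially closed algebra equipped with the two consecutive dimensions $d$ and $d-1$. By the definition of descent, this says exactly that $(\widetilde A',d-1)$ is the descent of $(\widetilde A',d)$, which is the claim.

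I do not expect any genuine obstacle here; the lemma is flagged as an immediate consequence of Proposition~\ref{prop:trf}, and the only point that requires a moment's care is the $d$-independence of the singular locus and of the transform, which is what permits a single proposition to cover both dimensions at once. The geometric content -- that strict transforms of zooms are again zooms -- has already been discharged inside the proof of Proposition~\ref{prop:trf}, so nothing further about blowups of transversal subvarieties needs to be reproved.
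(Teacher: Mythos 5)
Your proposal is correct and is exactly the argument the paper intends: the paper gives no written proof beyond declaring the lemma an immediate consequence of Proposition~\ref{prop:trf}, and your write-up is precisely the unwinding of that remark -- two applications of the proposition in dimensions $d$ and $d-1$, justified by the observation that the singular locus and the transform of a gallimaufry depend only on the algebra $A$ and not on $d$.
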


%\begin{proof} Straightforward. \end{proof}

%-------------------------------------------------

\begin{lem} \label{lem:tight}
If a gallimaufry has a descent, then it is tight. Conversely, if $E$ is empty, then every tight gallimaufry with $d>0$ has a descent. The transform of a tight gallimaufry under blowup is tight. 
\end{lem}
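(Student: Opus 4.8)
The three assertions are statements about the order function, so my plan is to reduce each to the behaviour of $\ord_B$ for $B$ the restriction of $A$ to a zoom, using Definition~\ref{def:ord} together with Proposition~\ref{prop:ord} (invariance under differential closure, independence of the zoom, and the bound $\ord_{A,d}\ge 1$).

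\textbf{Descent implies tight.} Suppose $(A,d-1)$ is again a gallimaufry and fix $p$ in the singular locus of $A$. Then there is a $(d-1)$-dimensional zoom $V'$ with $A(V')\subseteq A$, i.e.\ $I(V')\subseteq A_1$. Choose local parameters $u_1,\dots,u_n$ at $p$ with $V'=\{u_1=\dots=u_{n-d+1}=0\}$ and each $E_i$ through $p$ a coordinate hyperplane; since $V'$ is not contained in any $E_i$, all of $u_1,\dots,u_{n-d+1}$ are non-$E$ coordinates. Dropping $u_{n-d+1}$ yields a transversal $V=\{u_1=\dots=u_{n-d}=0\}\supseteq V'$ of dimension $d$, not contained in any $E_i$, with $I(V)\subseteq I(V')\subseteq A_1$, hence a zoom of $(A,d)$. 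Now $u_{n-d+1}\in I(V')\subseteq A_1$ restricts to an element of $B_1=A_1|_V$ of order $1$ at $p$, so $\ord_{A,d}(p)=\ord_B(p)\le 1$. As gallimaufry orders are $\ge 1$, this value is $1$ and $(A,d)$ is tight.

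\textbf{Tight implies descent (for $E=\emptyset$).} Fix $p$ in the singular locus and a $d$-dimensional zoom $V$, and put $B=A|_V$. Tightness gives $\ord_B(p)=1$, attained at some generating degree $i$, so some $g\in B_{i,p}$ has order $i$. Since $B$ is differentially closed, $\Delta^{i-1}(B_i)\subseteq B_1$, and an $(i-1)$-fold derivative of $g$ produces $f\in B_{1,p}$ of order $1$ (here characteristic zero is essential). Lift $f$ to $\widetilde f\in A_1$; then $\ord_p(\widetilde f)=1$ and $\widetilde f$ meets $V$ transversally. Set $V'=V\cap\{\widetilde f=0\}$, which is smooth of dimension $d-1$, with $I(V')=I(V)+(\widetilde f)\subseteq A_1$ because both summands lie in the ideal $A_1$. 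As $E$ is empty, $V'$ is transversal, hence a $(d-1)$-dimensional zoom; thus $(A,d-1)$ is a gallimaufry, i.e.\ the descent of $(A,d)$.

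\textbf{Transform of tight is tight.} Let $Z\subseteq\sing(A)$ be the center, $f:W'\to W$ the blowup, and take $p'$ in the singular locus of $\widetilde A'$, so that $p=f(p')\in\sing(A)$. Choose a $d$-dimensional zoom $V$ at $p$ and set $B=A|_V$; its strict transform $V'$ is a zoom of $(\widetilde A',d)$ at $p'$ by Proposition~\ref{prop:trf}, and by Lemma~\ref{lem:rescom} the transform $B'$ of $B$ equals $A'|_{V'}$. By tightness, $\ord_B(p)=1$ is attained at a generating degree $i$ with $\ord_p(B_i)=i$; since $Z\subseteq\sing(B)\subseteq\sing(B_i,i)$, the factorization $f^{\ast}B_i=M^iB'_i$ holds and a coordinate computation in the blowup chart shows $\ord_{p'}(B'_i)\le i$, whence $\ord_{B'}(p')\le 1$. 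Finally $A'\subseteq\widetilde A'$ gives $A'|_{V'}\subseteq\widetilde A'|_{V'}$, so monotonicity of $\ord$ in the algebra yields $\ord_{\widetilde A',d}(p')=\ord_{\widetilde A'|_{V'}}(p')\le\ord_{B'}(p')\le 1$, again forcing the value $1$.

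\textbf{Main obstacle.} The genuine content is the last part, namely that the order does not increase under blowup. The two delicate points are the controlled-transform estimate $\ord_{p'}(B'_i)\le i$ -- which requires $i$ to realize the minimum defining $\ord_B(p)$, so that $\ord_p(B_i)$ equals the control $i$ and the leading form governs the transform -- and the commutation bookkeeping, for which I rely on Lemma~\ref{lem:rescom} to interchange transform and restriction, and on the inclusion $A'\subseteq\widetilde A'$ rather than any claim that differential closure commutes with restriction on the nose.
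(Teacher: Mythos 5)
Your proposal is correct, and its first two parts follow the paper's own argument almost verbatim: the descent-implies-tight direction drops one defining parameter of the $(d-1)$-dimensional zoom to manufacture a $d$-dimensional zoom on which the dropped parameter has order~$1$, and the converse cuts the $d$-dimensional zoom by the zero set of an order-$1$ element of $A_1$ (you are in fact more careful than the paper here, since you explain via $\Delta^{i-1}(B_i)\subseteq B_1$ and characteristic zero why such a degree-$1$ element exists when the minimum defining $\ord_B(p)=1$ is attained only in higher degree). Where you genuinely diverge is the third assertion. The paper proves it without any order computation: it observes that tightness does not involve $E$, so one may assume $E=\emptyset$, obtain a descent $(A,d-1)$ from the second part, invoke Lemma~\ref{lem:comm} (descent commutes with transform, itself a trivial consequence of strict transforms of zooms being zooms) to see that $(\widetilde A',d)$ again has a descent, and conclude tightness of the transform from the first part. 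You instead prove directly that the order of the restricted algebra does not increase under a permissible blowup, via the factorization $f^\ast B_i=M^iB_i'$ and a chart computation. This is the classical ``order does not increase'' theorem and is true, but it is precisely the nontrivial analytic input that the paper's formal detour through the descent is designed to avoid; your phrase ``a coordinate computation shows $\ord_{p'}(B_i')\le i$'' is the one place where real work is being asserted rather than done (one must argue that the degree-$i$ leading form of a generator of order exactly $i$, dehomogenized in the blowup chart, is a nonzero polynomial of degree at most $i$). So both routes are valid: yours is self-contained but computational and buys nothing extra here, while the paper's exploits the equivalence ``tight $\Leftrightarrow$ has a descent'' to make the blowup stability purely structural.
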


\begin{proof}
Assume that $(A,d)$ has a descent. Let $p$ be a point in the singular locus.
Then there exists a zoom $V_1$ of dimension $d-1$ at $p$. It is easy to construct
a zoom $V$ of dimension $d$: just leave away one of the defining equations
in a system of regular parameters that defines $V_1$ and such that 
every hypersurface $E_i$ in $E$ containing $p$ is defined by one of the 
parameters -- such a system exists by transversality of $V_1$. 
Let $a\in\OO_{W,p}$ be the parameter which has been left out to define $V$.
Then $a$ considered as an element in $\OO_{V,p}$ is an element of order~1 
in the restriction $A|_{V}$. Hence $\ord_{A,d}(p)=1$ and
it follows that $(A,d)$ is tight.

Now, assume that $E$ is empty, and that $(A,d)$ is tight with $d\ge 1$.
Let $p$ be a point in the singular locus. Let $V$ be a zoom at $p$,
closed in some open neighborhood $U$ of $p$. Since $A|_V$ has order $1$
at $p$ (note that this can only happen for $d\ge 1$), 
there exists an element $a\in A_1$ such that $a$ restricted to $V$
has order $1$. The zero set $Y$ of $a$ in $V$ is then non-singular
at $p$. After shrinking $U$ we may assume that $Y\cap U$
is non-singular -- and therefore transversal, since $E$ is empty. Obviously
$A(Y)$ is contained in $A$ restricted to $U$, hence $Y$ is a zoom
at $p$ of dimension $d-1$. It follows that $(A,d-1)$ is a descent.

In order to prove the last statement, assume again that $(A,d)$ 
is tight. This condition is not related to $E$, so we may assume
that $E$ is empty. Therefore, $(A,d)$ has a descent $(A,d-1)$.
If $\widetilde A'$ is the differential closure of the transform $A'$ of $A$, then
$(\widetilde A',d-1)$ is the descent of $(\widetilde A',d)$, by Lemma~\ref{lem:comm}.
But then $(\widetilde A',d)$ is tight, as we have shown above.
\end{proof}

%-------------------------------------------------

\subsection{\Factors.} 

Descent only works for tight gallimaufries. It is possible to produce
tight gallimaufries from non-tight ones.
The easiest way to see this
is in terms of ideals with control: when the control $c$ is replaced by
the maximum order of the ideal, then the ideal with the new control is tight.
Changing the control also changes the transform of the ideal under blowup:
the new ideal differs from the old one by a \factor{} supported on the exceptional divisor.  This leads us to the problem of defining \factors{} for gallimaufries. 

%-------------------------------------------------

\begin{defn} \label{def:ft}
Let $(W,E)$ be a habitat. A {\em monomial} on $(W,E)$ is a formal sum $S$ of the
hypersurfaces in $E$ with non-negative rational coefficients.
A monomial induces a {\em monomial function} $s:W\to\Q$ taking $p$ 
to the sum of the coefficients of the hypersurfaces through $p$. 

Let $(I,c)$ be an ideal with control with order function $\ord:W\to\Q$,
and let $S$ be a monomial with monomial function $s:W\to\Q$. 
We say that $S$ is a {\em \factor{}} of $(I,c)$ if and only if 
$\ord(p)\ge s(p)$ for all $p\in W$. In the same way, we define \factors{}
of Rees algebras.
\end{defn}

%-------------------------------------------------

\begin{lem} \label{lem:quot}
Let $(I,c)$ be an ideal with control.
Let $S$ be a \factor{} of $(I,c)$.
Then there exists an ideal with control $(J,b)$, an integer $k>0$,
and a monomial ideal $N$, such that the order function of
$N$ is $k\cdot s$ and $I^{bk}=N^{bc}J^{ck}$. The ideal $(J,b)$ is unique
up to equivalence. Its order function is $\ord_{I,c}-s$.
\end{lem}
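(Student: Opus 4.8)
The plan is to construct $J$ by literally dividing $I$ by the monomial ideal attached to $S$, after passing to a power that clears denominators, and then to read off both the defining equation and the order function from this construction; uniqueness will follow by cross-multiplying two candidate equations and cancelling the invertible monomial parts. Concretely, write $S=\sum_i q_iE_i$ with $q_i\in\Q_{\ge 0}$ and choose an integer $k>0$ with $kq_i\in\Z$ for all $i$. Set $N:=\bigotimes_i\idl(E_i)^{kq_i}$, a genuine monomial ideal. It is invertible with local generator $\prod_i x_i^{kq_i}$, where $x_i$ is a local equation of $E_i$, and since $\ord_p(x_i)=1$ exactly when $E_i$ passes through $p$, the order function of $N$ is $ks$, as required.

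The decisive step is divisibility: I claim $N^c$ divides $I^k$. It suffices to show, for each $i$, that the vanishing order of $I$ along $E_i$ is at least $cq_i$; the local equations $x_i$ of the components of a normal crossings divisor are part of a regular system of parameters, so divisibility by the separate powers $x_i^{ckq_i}$ combines to divisibility by their product. To check the vanishing order I test the \factor{} inequality $\ord_{I,c}\ge s$ at a general point $p$ of $E_i$: such a $p$ lies on no other component, since components of a normal crossings divisor meet in codimension $\ge 2$, so $s(p)=q_i$; and $p$ may be taken where $\ord_p(I)$ equals its generic value along $E_i$. Hence that generic value, which is exactly the vanishing order of $I$ along $E_i$, is $\ge cq_i$. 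This is the point I expect to be the main obstacle, because the \factor{} hypothesis is a pointwise order bound at arbitrary points whereas divisibility is a statement about vanishing orders along the individual $E_i$; the bridge is precisely to evaluate the inequality where the normal crossings structure isolates the single coefficient $q_i$. Granting the claim, I define $J:=I^k\cdot(N^c)^{-1}$ as an ideal sheaf and set $b:=ck$.

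With these choices the defining equation is immediate: from $I^k=N^cJ$, raising to the power $ck$ gives
\[ I^{bk}=I^{ck^2}=(N^cJ)^{ck}=N^{c^2k}J^{ck}=N^{bc}J^{ck}. \]
The order function is just as direct: as $N^c$ is invertible with $\ord_p(N^c)=cks(p)$, one has $\ord_p(J)=\ord_p(I^k)-\ord_p(N^c)=k\,\ord_p(I)-cks(p)$, so that $\ord_{J,b}=\ord_{I,c}-s$.

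For uniqueness up to equivalence, suppose $(J_1,b_1)$ and $(J_2,b_2)$ both satisfy the conclusion, with integers $k_1,k_2$ and monomial ideals $N_1,N_2$ of order functions $k_1s$ and $k_2s$. I solve each defining equation for $J_j^{ck_j}$ by dividing by the invertible $N_j^{b_jc}$, raise the first relation to the power $b_2k_2$ and the second to $b_1k_1$, and compare. The monomial contributions $N_1^{b_1b_2ck_2}$ and $N_2^{b_1b_2ck_1}$ have the common order function $b_1b_2ck_1k_2\,s$; since a monomial ideal is determined by its order function, its exponent along each $E_i$ being the vanishing order there, these two monomial ideals coincide, and being invertible they cancel. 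What survives is $J_1^{ck_1k_2b_2}=J_2^{ck_1k_2b_1}$, i.e. $J_1^{\ell b_2}=J_2^{\ell b_1}$ with $\ell:=ck_1k_2$, which is exactly the equivalence of $(J_1,b_1)$ and $(J_2,b_2)$. Finally, the asserted order function $\ord_{I,c}-s$ holds for every such candidate, since equivalent ideals with control share their order function by Proposition~\ref{prop:ord}.
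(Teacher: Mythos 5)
Your proof is correct and follows the same route as the paper: choose $k$ clearing the denominators of $S$, let $N$ be the monomial ideal attached to $kS$, deduce from the \factor{} inequality that $N^{c}$ divides $I^{k}$ (the paper tests the inequality at every point of $E_i$, you at a general one -- both give the required vanishing order along $E_i$), and set $J=I^{k}\cdot(N^{c})^{-1}$ with $b=ck$. You additionally spell out the uniqueness claim by cross-multiplying the two defining equations and cancelling the invertible monomial parts, a step the paper's proof leaves implicit; this is a welcome completion rather than a divergence.
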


\begin{defn}
The ideal $(J,b)$ is called the {\em quotient} of $(I,c)$
by $S$. The quotient of a Rees algebra by a \factor{} is defined by passing
to the associated ideal with control, taking the quotient, and returning to
the associated Rees algebra.\footnote{It seems mandatory here to pass from algebras to associated ideals in order to define \factors.}
\end{defn}

\begin{proof}
Let $k$ be a common denominator of all coefficients appearing in $S$.
Let $N$ be the monomial ideal defined by the integer-valued monomial $kS$.
For $E_i\in E$,  let $\frac{r}{k}$ be the coefficient of $E_i$ in $S$. 
Then $\frac{r}{k}\le\frac{\ord_p(I_p)}{c}$ for all $p\in E_i$.
It follows that $\ord_p(I_p^{k})\ge rc$ for all $p\in E_i$.
Then the ideal $I({E_i}^{rc})$ of ${E_i}^{rc}$ is a \factor{} of every section in $I^k$. 
The product of all these \factors{} is $N^{c}$, and by dividing them out
we get $J$. Finally we set $b=ck$; then the desired equations for
ideals and order functions are fulfilled for $(J,b)$.
\end{proof}

%-------------------------------------------------

\begin{lem} \label{lem:stab}
Let $(I,c)$ be an ideal with control. Let $S$ be a \factor{} of $(I,c)$
and let $(J,b)$ be the quotient. Let $Z$ be a transversal subvariety
contained in the singular locus of both $(I,c)$ and $(J,b)$.
Let $I'$ and $J'$ be the controlled transforms on $W'$ under the blowup of $W$ along $Z$.
Then there is a \factor{} $S'$ of $(I',c)$ such that $(J',b)$ is
equivalent to the quotient. 

Similarily, the quotient of a Rees algebra $A$ by some \factor{} transforms under blowup
to a quotient of the transform of $A$.\footnote{Compare this with the transformation formula under blowup for the combinatorial handicap of a mobile in \cite{Encinas_Hauser:02}.}
\end{lem}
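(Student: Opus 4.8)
The plan is to pull back the defining relation of the quotient from Lemma~\ref{lem:quot} along the blowup $f:W'\to W$ and to read off the new \factor{} from the exceptional multiplicity that the monomial ideal acquires. Recall from Lemma~\ref{lem:quot} that there are an integer $k>0$ and a monomial ideal $N$ with order function $k\cdot s$ such that $I^{bk}=N^{bc}J^{ck}$, where $s$ is the monomial function of $S$. Write $M$ for the ideal of the exceptional divisor $D=f^{-1}(Z)$. Since $Z$ lies in the singular locus of both $(I,c)$ and $(J,b)$, the controlled transforms satisfy $f^\ast(I)=M^{c}I'$ and $f^\ast(J)=M^{b}J'$.

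The only point that requires geometry is the behaviour of $N$. Writing $S=\sum_i a_i E_i$, we have $N=\prod_i I(E_i)^{ka_i}$, and by transversality of $Z$ each factor pulls back as $f^\ast(I(E_i))=M^{\epsilon_i}I(E_i')$, where $E_i'$ is the proper transform and $\epsilon_i=1$ if $Z\subseteq E_i$ and $\epsilon_i=0$ otherwise. Hence $f^\ast(N)=M^{k\mu}N'$, with $\mu:=\sum_{Z\subseteq E_i}a_i$ and $N':=\prod_i I(E_i')^{ka_i}$; note that $k\mu$ is an integer. Pulling back $I^{bk}=N^{bc}J^{ck}$ and substituting gives $M^{bck}(I')^{bk}=M^{k\mu bc+bck}(N')^{bc}(J')^{ck}$, and cancelling the invertible factor $M^{bck}$ yields $(I')^{bk}=(N'')^{bc}(J')^{ck}$ with $N'':=M^{k\mu}N'$.

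Now define $S':=\sum_i a_i E_i'+\mu D$, a \factor{} candidate on the enlarged habitat. Its associated monomial ideal (with the same denominator $k$) is exactly $N''$, since the order function of $N''=I(D)^{k\mu}\prod_i I(E_i')^{ka_i}$ equals $k\cdot s'$, where $s'$ is the monomial function of $S'$. Comparing orders in $(I')^{bk}=(N'')^{bc}(J')^{ck}$ gives $\ord_{I',c}=s'+\ord_{J',b}$; as $\ord_{J',b}\ge 0$, this shows $S'$ is indeed a \factor{} of $(I',c)$. Moreover the displayed equation is precisely the defining relation of Lemma~\ref{lem:quot} for the data $(I',c)$, $S'$, integer $k$ and monomial ideal $N''$, so by the uniqueness part of that lemma $(J',b)$ is equivalent to the quotient of $(I',c)$ by $S'$. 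This proves the first claim.

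For the Rees algebra statement, one passes to the associated ideal with control: if $A$ is associated to $(I,c)$, then $S$ is a \factor{} of $(I,c)$ because orders agree by Proposition~\ref{prop:ord}, its quotient is $(J,b)$, and the transform $A'$ is associated to $(I',c)$ because associated pairs have associated transforms under blowup. Applying the first part and returning to the associated Rees algebra shows that the quotient of $A$ by $S$ transforms to the quotient of $A'$ by $S'$. I expect the main obstacle to be the bookkeeping of the exceptional coefficient $\mu$: one must verify that the monomial ideal $N$ picks up exactly the power $M^{k\mu}$ dictated by the components of $E$ through $Z$, that $k\mu$ is integral, and that assigning $\mu$ to the new divisor $D$ produces a genuine \factor{} of the transformed ideal rather than merely a formal monomial.
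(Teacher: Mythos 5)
Your proposal is correct and follows essentially the same route as the paper: pull back the defining relation $I^{bk}=N^{bc}J^{ck}$, cancel the invertible power $M^{bck}$ of the exceptional ideal, and take $S'$ to be the monomial of $f^\ast(N)$ divided by $k$. The extra bookkeeping you supply (the decomposition $f^\ast(N)=M^{k\mu}N'$, the integrality of $k\mu$, and the order comparison showing $S'$ is genuinely a \factor{} of $(I',c)$) just makes explicit what the paper's three-line proof leaves implicit.
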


\begin{proof}
By pulling back the two sides of the equation $I^{bk}=N^{bc}J^{ck}$,
we obtain $(I')^{bk}M^{cbk}=f^\ast(N)^{bc}(J')^{ck}M^{bck}$, where $M$
is the ideal sheaf of the exceptional divisor $D$. Since $M$ is invertible,
we may cancel it on both sides. Then the monomial of $f^\ast(N)$ divided by $k$ is
the desired \factor{} of $(I',c)$.
\end{proof}

%-------------------------------------------------

\begin{defn}
Let $(A,d)$ be a gallimaufry. Let $S$ be a monomial.
We say that $S$ is a \factor{} of $(A,d)$ if and only if for any $p$
in the singular locus, there exists a zoom $V$ such that $S$ considered
as a monomial in $V$ is a \factor{} of $A|_V$. 
\end{defn}

%-------------------------------------------------

\begin{rem}
If $V_1$ and $V_2$ are two zooms at $p$, then there exist analytic
left inverses $\beta_j:W\to V_j$
for the inclusion maps $i_j:V_j\to W$,
and analytic isomorphisms
$\phi:V_1\to V_2$ and $\psi:W\to W$ such that
$i_2=\psi\circ i_1$ and $\phi\circ\beta_1=\beta_2\circ\psi=\beta_2$
and such that $\psi$ fixes the hypersurfaces in $E$, as a consequence
of transversality. By Theorem~\ref{thm:wlo}, it follows that
the property for some monomial being a \factor{} of a gallimaufry 
does not depend on the choice of the zoom.
\end{rem}

\begin{rem}
It is not possible to recognize \factors{} of a gallimaufry purely by looking
at the order function. The reason is that the domain of the order function is
only the singular locus (which may be empty), and this is too small
for this purpose.
\end{rem}

\begin{defn}
A \factor{} $S$ of $(A,d)$ with monomial function $s$ is called {\em exhaustive} if and only if $s(p)=\ord(p)$ for all $p$ in the singular locus.\footnote{This notion parallels the concept of complete \factors{} from the section {\em Salmagundy}.}
\end{defn}

%-------------------------------------------------

\begin{lem} \label{lem:ex}
Let $S$ be an exhaustive \factor{} of the gallimaufry $(A,d)$, with
monomial function $s$ and monomial ideal $(N,k)$ (i.e., $k$
is a common denominator of all values and $N$ is the monomial ideal
with order function $ks$). Let $(A',d)$ be the transformed gallimaufry
under some blowup along a transversal center inside the singular locus.
Let $S'$ be (the ideal of) the transform of $(N,d)$. Then $S'$ is an exhaustive \factor{} of $(A',d)$.
\end{lem}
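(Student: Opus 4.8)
The plan is to reduce the statement to a purely local assertion about an ideal with control on a zoom, and then to prove that local assertion by a direct blowup computation using the quotient equation of Lemma~\ref{lem:quot}. Fix a point $p'$ in the singular locus of $(A',d)$ and set $p=f(p')$, where $f\colon W'\to W$ is the blowup. Since the singular locus of a transform maps into the singular locus of the original, $p$ lies in the singular locus of $(A,d)$, so there is a zoom $V$ at $p$; as in the proof of Proposition~\ref{prop:trf}, its strict transform $V'$ is a zoom at $p'$. Because $Z\subseteq\sing(A)=\sing(A|_V)\subseteq V$ near $p$ (Lemma~\ref{lem:rescom}), we may blow up $V$ along $Z$, and Lemma~\ref{lem:rescom} identifies the transform of $B:=A|_V$ with the restriction of $A'$ to $V'$; by Lemma~\ref{lem:later} and Proposition~\ref{prop:ord} this identification respects order functions. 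Passing to an ideal with control $(I,c)$ associated to $B$ (with the same order function, singular locus and transforms), and writing $N$, $s$ for the monomial ideal and the monomial function restricted to $V$, the whole lemma is reduced to the following local claim: if $S$ is an exhaustive \factor{} of $(I,c)$, then the controlled transform $S'$ of $(N,k)$ is an exhaustive \factor{} of the controlled transform $(I',c)$.

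To prove the local claim, I would exploit that exhaustiveness forces $s(q)=\ord_{I,c}(q)$ for every $q$ in $\sing(I,c)$, where this common value is $\ge 1$ by Proposition~\ref{prop:ord}. Two consequences are decisive. First, $s|_Z=\ord_{I,c}|_Z\ge 1$, so $Z\subseteq\sing(N,k)$ and the controlled transform of $(N,k)$ is in fact defined; write $f^\ast N=M^kN'$, where $M$ is the ideal of the exceptional divisor $D$. Second, the quotient $(J,b)$ of Lemma~\ref{lem:quot} has order function $\ord_{I,c}-s$, which vanishes on $\sing(I,c)\supseteq Z$; hence $J$ is the unit ideal on an open neighbourhood $U$ of $Z$, and $f^\ast J$ is a unit on $f^{-1}(U)\supseteq D$. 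Pulling back the identity $I^{bk}=N^{bc}J^{ck}$ of Lemma~\ref{lem:quot} and substituting $f^\ast I=M^cI'$ and $f^\ast N=M^kN'$, the powers of $M$ cancel and leave
\[ (I')^{bk}=(N')^{bc}\,(f^\ast J)^{ck}. \]

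From this single equation both halves of the claim follow. The divisibility $(N')^{bc}\mid(I')^{bk}$ gives $\ord_{N',k}\le\ord_{I',c}$ everywhere, i.e.\ $S'$ is a \factor{} of $(I',c)$. For exhaustiveness, over $D$ the term $(f^\ast J)^{ck}$ is a unit, so $(I')^{bk}=(N')^{bc}$ up to units there, whence $\ord_{I',c}=\ord_{N',k}=s'$ on the part of $\sing(I',c)$ lying in $D$; away from $D$ the blowup is an isomorphism onto $\sing(I,c)\setminus Z$, where $s'=s=\ord_{I,c}=\ord_{I',c}$ by the exhaustiveness of $S$. Thus $s'=\ord_{I',c}$ on all of $\sing(I',c)$. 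Translating back through the associated ideal and the zoom $V'$, this shows that $S'$ restricts to an exhaustive \factor{} of $A'|_{V'}$ at the arbitrary point $p'$, which is exactly the assertion for the gallimaufry $(A',d)$.

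The main obstacle, and the one genuine use of the exhaustiveness hypothesis, is the passage to the \emph{controlled} transform of $N$: for a general \factor{} one only has $s|_Z<1$, and Lemma~\ref{lem:stab} must retain the total transform $f^\ast N$, whereas here $s|_Z\ge 1$ lets one divide out $M^k$ and match the controlled transform of $N$ with that of $I$ up to the unit $f^\ast J$. Some care is also needed to check that restriction to the transversal zoom commutes with the monomial transform, so that the $S'$ produced on $W'$ agrees with the one computed on $V'$; this is a routine local coordinate verification given transversality of $Z$ and $V$.
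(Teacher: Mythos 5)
Your proof is correct and follows essentially the same route as the paper: reduce to a zoom, pass to the associated ideal with control, and observe that exhaustiveness makes the quotient $J$ trivial near the singular locus, so that $(I,c)$ and $(N|_V,k)$ are locally interchangeable and remain so after the controlled transform. Your displayed identity $(I')^{bk}=(N')^{bc}(f^\ast J)^{ck}$ is just an explicit unwinding of the paper's one-line assertion that these two controlled ideals are equivalent locally at singular points and that equivalence is preserved under transform.
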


\begin{proof}
Let $p$ be a singular point, and let $V$ be a zoom at $p$.
Let $(I,c)$ be the ideal associated to $A|_V$ for some zoom $V$.
Then $(I,c)$ is equivalent to $(N|_V,k)$ locally at $p$.
It follows that the transforms $(I',c)$ and $(N'|_{V'})$ are also
equivalent (where $V'$ is the strict transform of $V$). Therefore the
order functions of $(I',c)$ and of $(N',k)$ are equal.
\end{proof}

An analogue of Lemma~\ref{lem:quot} for gallimaufries
would certainly be useful for dividing out \factors.
Extra care is necessary to make the construction independent
of the choice of the zoom.

%-------------------------------------------------

\begin{thm} \label{thm:q}
Let $M=(W,E)$ be a habitat.
Let $(A,d)$ be a gallimaufry on $M$. Let $S$ be a \factor{} of $(A,d)$
with monomial function $s$.
Let $q>0$ be a positive number. 
Then there exists a gallimaufry $(B,d)$ such that
\[ \sing(B,d)=\{ p\in\sing(A,d)\mid \ord_{A,d}(p)-s(p)\ge q\} \]
and
\[ \ord_{B,d}(p)=\min\left(\ord_{A,d}(p),\frac{\ord_{A,d}(p)-s(p)}{q}\right) \]
for all $p$ in $\sing(B,d)$.
\end{thm}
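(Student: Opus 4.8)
The plan is to reduce the construction to the already-understood quotient of an ideal with control (Lemma~\ref{lem:quot}) by passing to a zoom, and then to globalize it using the restriction/extension machinery of Theorems~\ref{thm:aus} and~\ref{thm:wlo}. The rescaling by $q$ and the minimum in the order formula will be produced by combining a control change with a sum of ideals with control.

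First I would carry out the construction locally. Fix a point $p$ in $\sing(A,d)$ with $\ord_{A,d}(p)-s(p)\ge q$ and choose a zoom $V$ at $p$. Let $R=A|_V$ be the restriction, a Villamayor algebra on $V$ whose order function is $\ord_{A,d}$ by Definition~\ref{def:ord}, and let $(I,c)$ be an associated ideal with control. Since $S$ is a \factor{} of $(A,d)$, it is a \factor{} of $(I,c)$ on $V$, so Lemma~\ref{lem:quot} produces a quotient $(J,b)$ with order function $\ord_{A,d}-s$. Writing $q=q_1/q_2$ in lowest terms, the ideal with control $(J^{q_2},q_1b)$ then has order function $\tfrac1q(\ord_{A,d}-s)$. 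I would next form the sum of ideals with control
\[ (I,c)+(J^{q_2},q_1b). \]
By the properties of the sum, its order function is the pointwise minimum $\min\bigl(\ord_{A,d},\tfrac1q(\ord_{A,d}-s)\bigr)$ and its singular locus is the intersection $\sing(A,d)\cap\{\ord_{A,d}-s\ge q\}$, which are exactly the two prescriptions of the theorem. The summand $(I,c)$ is precisely what forces the singular locus to remain inside $\sing(A,d)$ and accounts for the minimum; this is the role played by the $\ord(s)$ term in the corresponding rule of Salmagundy. Passing to the associated Rees algebra and taking its differential closure -- which does not change the order function, by Proposition~\ref{prop:ord} -- yields a Villamayor algebra $R_1$ on $V$ with the desired order function and singular locus.

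Then I would extend $R_1$ from the zoom $V$ back to $W$. By Theorem~\ref{thm:aus} there is a Villamayor algebra $B$ on a neighborhood of $p$ in $W$ containing $A(V)$ with $B|_V=R_1$. By construction $V$ is a zoom for $(B,d)$, so $(B,d)$ is a gallimaufry and $\ord_{B,d}=\ord_{R_1}$ on its singular locus, giving the asserted formula.

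The main obstacle is to show that this local recipe is independent of the choice of zoom, so that the locally defined algebras $B$ glue to a single gallimaufry on $(W,E)$. The key point is that the object being divided out -- the monomial $S$ -- is globally defined on $(W,E)$, so the only choices involved are the zoom $V$ and its local representative $(I,c)$. For two zooms $V_1,V_2$ at $p$, Theorem~\ref{thm:wlo} (applied after completion, as in the remark following it) furnishes isomorphisms $\phi,\psi$ identifying the restrictions $A|_{V_1}$ and $A|_{V_2}$ and fixing the hypersurfaces of $E$; since the quotient, the control rescaling, and the sum are all natural with respect to such isomorphisms, the two local constructions are carried into one another, so that the order function and singular locus of $B$ are well defined. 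Theorem~\ref{thm:aus} then guarantees that the locally extended algebras agree on overlaps, producing a coherent Villamayor algebra on $W$. Away from $\sing(B,d)$ the gallimaufry condition imposes no constraint, so $B$ may be completed to all of $W$ without affecting the two assertions of the theorem.
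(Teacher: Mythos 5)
Your proposal is correct and follows essentially the same route as the paper's proof: restrict to a zoom, form the quotient via Lemma~\ref{lem:quot}, rescale by changing the control, add the original datum to produce the minimum and the intersection of singular loci, extend back to $W$ by Theorem~\ref{thm:aus}, and establish independence of the zoom via Theorem~\ref{thm:wlo} after completion. The only cosmetic difference is that you perform the sum at the level of ideals with control rather than of Rees algebras, which is equivalent since sums are compatible with the association between the two.
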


We call $(B,d)$ the {\em quotient} of $(A,d)$ by $S$ scaled by $q$.

\begin{proof}
We construct the sheaf $B$ locally, so we assume that we have a zoom $V$.
Then $S$ (considered as a monomial in $V$) is a \factor{} of $A|_V$.
Let $(I,c)$ be the ideal associated to $A|_V$.
By Lemma~\ref{lem:quot}, there is a quotient ideal $(J,c)$.
Let $C$ be the differential closure of the Rees algebra associated
to $(J^n,cm)$, where $\frac{m}{n}=q$. Then we define $B$ as the extension
of $A|_V+C$. The claimed equalities of the singular loci and order functions
are then fulfilled. 

We need to show that the result does not depend on the choice of
the zoom $V$ (otherwise we do not get a sheaf). 
Let $V_1,V_2$ be two zooms at $p$, where $p$ is a point in the
singular locus. We denote the two results of the above construction
by $B_1,B_2$. Let us assume that there exist analytic left inverses
$\beta_1:W\to V_1$ and $\beta_2:W\to V_2$
for the inclusion maps $i_1:V_1\to W$ and $i_2:V_2\to W$,
as well as isomorphisms $\phi:V_1\to V_2$ and
$\psi:W\to W$ such that
$i_2=\psi\circ i_1$ and $\phi\circ\beta_1=\beta_2\circ\psi=\beta_2$
and $\psi$ maps any divisor in $E$ into itself.
By Theorem~\ref{thm:wlo}, the isomorphism $\phi^\ast$ takes
$A|_{V_2}$ to $A|_{V_1}$. Let $C_i=B_i|_{V_i}$ for $i=1,2$.
Then $C_i$ may be obtained as above (passing to ideals with control,
taking quotients changing the control, passing back to algebras, 
adding $A|{V_i}$), and it follows that $\phi^\ast$ takes $C_2$ to $C_1$.

By the first construction of the extension operator in Theorem~\ref{thm:aus},
$B_1=\beta_1^\ast(C_1)+A(V_1)$ and $B_2=\beta_2^\ast(C_2)+A(V_2)$.
By $A|_{V_1}\subseteq C_1$ and monotonicity of extension, we have $B_1\supseteq A\supseteq A(V_2)$
and similarily $B_2\supseteq A\supseteq A(V_1)$. For the other summands, we have
\[ \beta_2^\ast(C_2)=\beta_1^\ast\circ\phi^\ast(C_2)= \beta_1^\ast(C_1) , \]
and it follows that $B_1=B_2$.

In general, there do not always exist left inverses $\beta_1,\beta_2$
and isomorphisms $\phi,\psi$ as above, not even in the local rings. 
But they do exist after completion. As completion is
faithfully flat, we get $B_1=B_2$ also in this case.
\end{proof}

%-------------------------------------------------

\begin{lem} \label{lem:q}
Let $(A,d)$ be a gallimaufry. Let $S$ be a \factor{} of $(A,d)$.
Let $q>0$ be a rational number.
Let $(B,d)$ be the quotient of $(A,d)$ by $S$ scaled by $q$. 
Let $Z$ be a transversal subvariety
contained in the singular locus of $B$,
and let $(A',d)$ and $(B',d)$ be the transformed gallimaufries.
Then there is a \factor{} $S'$ of $(A',d)$ such that $(B',d)$ is
the quotient of $(A',d)$ by $S'$ scaled by $q$.
\end{lem}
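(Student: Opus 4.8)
The plan is to push the statement down to the level of ideals with control on a zoom, where the analogous stability is already furnished by Lemma~\ref{lem:stab}, and then to reassemble the gallimaufry by the construction of Theorem~\ref{thm:q}. The point is that both $(B',d)$ (the differential-closure transform of the quotient) and the quotient of $(A',d)$ by the prospective factor $S'$ are manufactured from local data on zooms by the \emph{same} chain of operations — restriction, passage to the associated ideal with control, division by a factor, scaling of the control, differential closure, and extension — and each link of this chain is compatible with blowup. So it suffices to verify that the chain commutes with the blowup transform and that $S'$ is intrinsically defined.

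First I would localize. Fix a point $p'\in\sing(B',d)$ and set $p=f(p')$, where $f\colon W'\to W$ is the blowup along $Z$. Since $Z\subseteq\sing(B,d)\subseteq\sing(A,d)$, the center lies, near $p$, inside every zoom of $(A,d)$; choose such a zoom $V$ at $p$. By Proposition~\ref{prop:trf} its strict transform $V'$ is a zoom of $(A',d)$ at $p'$, and by Lemma~\ref{lem:rescom} restriction to the zoom commutes with the blowup, so that the ideal with control $(I',c)$ associated to $A'|_{V'}$ is exactly the controlled transform of the ideal $(I,c)$ associated to $A|_V$.

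Next I would recall the construction of $(B,d)$ from Theorem~\ref{thm:q} on $V$: it is the extension of the differential closure $C$ of the Rees algebra associated to the scaled quotient ideal of $(I,c)$ by $S$, i.e.\ the ideal of order $\tfrac1q(\ord_{I,c}-s)$. Because $Z\subseteq\sing(B,d)$, the center lies in the singular locus of this scaled quotient ideal, so Lemma~\ref{lem:stab} applies and produces a factor $S'_V$ of $(I',c)$ whose scaled quotient is equivalent to the controlled transform of the scaled quotient ideal. Combining this with Lemma~\ref{lem:later} (differential closure commutes with transform) and again Lemma~\ref{lem:rescom}, the transform $C'$ has the same differential closure as the scaled quotient of $(I',c)$ by $S'_V$; extending via Theorem~\ref{thm:aus} then identifies $B'|_{V'}$ with the restriction of the quotient of $(A',d)$ by $S'_V$. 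Reading $S'_V$ off from the pullback of the monomial ideal as in the proof of Lemma~\ref{lem:stab}, one finds that it keeps the coefficient of each $E_i$ on its strict transform and assigns to the new exceptional divisor $D$ the coefficient $s(Z)+q-1$ — precisely the rule anticipated by the game in Rule~\ref{rul:comm}, Issue~\ref{srul:qtr}. Defining $S'$ globally by these coefficients yields a monomial on $(W',E')$; that it is a genuine factor of $(A',d)$, namely dominated by the order function, follows from $Z\subseteq\sing(B,d)$, which gives $\ord_{A,d}(Z)-s(Z)\ge q$.

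The main obstacle I expect is twofold. The first is the correct bookkeeping of the exceptional multiplicity through the change of control induced by the scale $q$: it is exactly the scaling that promotes the naive transform coefficient $s(Z)$ on $D$ to $s(Z)+q-1$, and obtaining this shift (rather than $s(Z)$ or $s(Z)-1$) requires carrying the controlled-transform computation of Lemmas~\ref{lem:im} and~\ref{lem:stab} through the scaled control rather than the unscaled control $c$. The second is independence of the construction from the chosen zoom: different zooms must produce the same monomial $S'$ and the same gallimaufry $B'$. This I would handle exactly as in the proof of Theorem~\ref{thm:q}, invoking Theorem~\ref{thm:wlo} to compare the two local constructions after completion, where the requisite left inverses and isomorphisms exist; since completion is faithfully flat, the resulting equality of algebras descends.
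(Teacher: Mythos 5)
Your argument is correct and follows the same route as the paper, whose entire proof of this lemma is the single sentence that it is a straightforward consequence of Lemma~\ref{lem:stab} and Lemma~\ref{lem:rescom} --- exactly the two pillars of your localize-to-a-zoom-and-transport argument. The details you supply beyond that (the exceptional coefficient $s(Z)+q-1$, which matches Rule~\ref{rul:comm}, Issue~\ref{srul:qtr}, and the zoom-independence via Theorem~\ref{thm:wlo} as in the proof of Theorem~\ref{thm:q}) are consistent with the paper's framework and fill in what it leaves implicit.
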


\begin{proof}
This is a straightforward consequence of Lemma~\ref{lem:stab}
and Lemma~\ref{lem:rescom}.
\end{proof}

%-------------------------------------------------

\subsection{Axioms.} 

We conclude this section by resuming the main properties of
gallimaufries and operations on them. This review may also be seen
as an ``axiomatic characterization'' of gallimaufries. It might well be
that one can replace the algebraic realization of gallimaufries presented
here by a different one, but as long as the axiomatic characterization
is fulfilled, the construction can be used for translating it
into the combinatorial game {\em Salmagundy}.

%-------------------------------------------------

\begin{thm} \label{thm:axioms}
Any gallimaufry of dimension $d$ on a habitat $(W,E)$ defines a closed subset, the singular locus, and an upper semicontinuous order function from the singular locus
to $\Q\cup\{\infty\}$. Its values are at least 1. If all values are
equal~1, then the gallimaufry is tight.

If $Z$ is a transversal subvariety inside the singular locus, then
the transform of the gallimaufry under the blowup of $W$ along $Z$ exists on the transformed habitat. The transform of a tight gallimaufry is tight.

If the gallimaufry is tight and $E$ is empty,
it admits a descent. The descent gallimaufry has the
same singular locus, but possibly a different order function; its
dimension is $d-1$. The transform under blowup does not change the
dimension $d$. The transform of the descent is the descent of the transform.

The subset of the singular locus with order $\infty$ is non-singular
of pure dimension $d$. If a $d$-dimensional component is transversal to $E$,
then the transform under the blowup along this component has no singular points in the exceptional
divisor. 

The existence of a \factor{} $S$ of a gallimaufry implies that
the singular locus contains the set of all points where the monomial function
is at least 1, and that the order function is greater than or equal to
the order function. If equality holds everywhere, i.e., if the \factor{} is
exhaustive, then the transform of the \factor{} is an exhaustive \factor{}
of the transform of the gallimaufry. The trivial monomial~1 is always
a \factor.

If a gallimaufry $G$ with order function $\ord_G$ has a \factor{} $S$,
and $q>0$ is a rational number, then there exists the quotient
gallimaufry scaled by $q$. Its singular locus is the set of all points $p$ in the singular locus of $G$ such that $\ord_G(p)-s(p)\ge q$,
where $s$ is the monomial function defined by $S$. Its order function
is the minimum of $(\ord_G-s)/q$ and $\ord_G$. The transform
of the quotient is a quotient of the transform by some monomial, scaled
by $q$.
\end{thm}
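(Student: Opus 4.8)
The statement is a recapitulation of the preceding results, so the plan is to read off each clause from the lemma or proposition that already proves it, reconciling only the handful of places where a global closed set must be matched with the pointwise, zoom-local definitions. I would begin with the semicontinuity block. That the singular locus is closed and that the order function $\ord_{A,d}$ takes values $\ge 1$ is Definition~\ref{def:ord} together with the last assertion of Proposition~\ref{prop:ord}; upper semicontinuity is extracted from the proof of Proposition~\ref{prop:ord}, where the superlevel set $\{\ord_{A,d}\ge q\}$ is displayed as an intersection of zero sets of the ideals $\Delta^j(A_i)$ with $qi>j$, hence closed. The implication that all values equal to $1$ forces tightness is simply the definition of tightness in the gallimaufry case.

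Next I would settle the transform and descent clauses. Existence of the transform along a transversal center inside the singular locus is Proposition~\ref{prop:trf}, and its preservation of tightness is the final sentence of Lemma~\ref{lem:tight}; that the dimension $d$ is unaffected is immediate from the definition of the transform of a gallimaufry. For the descent, tightness with $E=\emptyset$ produces one by Lemma~\ref{lem:tight}; it shares the singular locus because it carries the same algebra $A$ with the label merely lowered to $d-1$, and the commutation of descent with blowup is exactly Lemma~\ref{lem:comm}. The order-$\infty$ stratum is handled through Remark~\ref{rem:ord}: $\ord_{A,d}(p)=\infty$ precisely when $A$ is locally $A(V)$ for a zoom $V$ at $p$, and since $\sing(A(V))=V$ by Lemma~\ref{lem:bold} with $V$ a $d$-dimensional transversal subvariety, this stratum is non-singular of pure dimension $d$; blowing up such a component $Z=V$ gives, by Lemma~\ref{lem:bold}, the trivial algebra near the exceptional divisor, so no singular point survives over it.

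For the \factor{} block, the inequality $\ord_{A,d}\ge s$ on the singular locus is the defining property of a \factor{} read through a zoom, using the order bound $\ord_{I,c}\ge s$ furnished in the proof of Lemma~\ref{lem:quot}; independence of the chosen zoom is guaranteed by Theorem~\ref{thm:wlo}. The containment $\{s\ge 1\}\subseteq\sing(A,d)$ is the one point I expect to cost care: the \factor{} axiom only constrains points already in the singular locus, so I would pass through the globally defined monomial ideal $N$ with $\ord_N=ks$ and the relation $I^{bk}=N^{bc}J^{ck}$ of Lemma~\ref{lem:quot}, which yields $\ord_p(I)\ge c\,s(p)$ and hence places every $p$ with $s(p)\ge 1$ into the singular locus. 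The trivial monomial $1$ has $s\equiv 0\le\ord_{A,d}$ and is therefore always a \factor{}, while the statement that an exhaustive \factor{} transforms to an exhaustive \factor{} is precisely Lemma~\ref{lem:ex}.

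The quotient clause is Theorem~\ref{thm:q}, which produces a gallimaufry with the prescribed singular locus $\{\ord_{A,d}-s\ge q\}$ and order function $\min(\ord_{A,d},(\ord_{A,d}-s)/q)$, and the persistence statement that the transform of the quotient is again a quotient of the transform by some monomial, scaled by the same $q$, is Lemma~\ref{lem:q}. The main obstacle is thus not a new computation, since the substantive work lives in Theorems~\ref{thm:q} and~\ref{thm:wlo} and in Lemmas~\ref{lem:tight},~\ref{lem:comm} and~\ref{lem:ex}; it is rather the bookkeeping that converts the zoom-dependent, pointwise definitions into the global, zoom-free assertions of the theorem. I expect the sharpest such point to be the containment $\{s\ge 1\}\subseteq\sing(A,d)$, which forces one to leave the comfort of the singular locus and argue through the globally defined monomial ideal $N$.
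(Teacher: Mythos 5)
Your proposal is correct and follows essentially the same route as the paper, which likewise proves Theorem~\ref{thm:axioms} by assembling the already established results (Proposition~\ref{prop:ord}, Proposition~\ref{prop:trf}, Lemmas~\ref{lem:tight}, \ref{lem:comm}, \ref{lem:ex}, Theorem~\ref{thm:q} and Lemma~\ref{lem:q}) and dismissing the remaining clauses as immediate from the definitions. If anything, you are slightly more careful than the paper at the two points it waves away -- the containment of $\{s\ge 1\}$ in the singular locus via the monomial ideal $N$ of Lemma~\ref{lem:quot}, and the order-$\infty$ stratum via Lemma~\ref{lem:bold} rather than the paper's vaguer cross-reference -- but the substance is identical.
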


\begin{proof}
Almost everything has been already proven, and the remaining assertions are
easy consequences:
the singular locus of the gallimaufry $(A,d)$ is the singular locus of $A$
as defined in Definition~\ref{def:rees}. It is closed because it is the
zero set of $A_1$ (see the proof of Lemma~\ref{lem:later}). The order
function is defined in Definition~\ref{def:ord} and Proposition~\ref{prop:ord}.
The existence of the transform is Proposition~\ref{prop:trf}. 
The stability of tightness under blowup is stated in Lemma~\ref{lem:tight}.
The existence of the descent in case $E$ is empty is also stated
in Lemma~\ref{lem:tight}. The equality of the singular loci is obvious,
because the singular locus depends only on $A$. Commutativity of
descent and transform is stated in Lemma~\ref{lem:comm}. 
The structure of the set of points with order $\infty$ follows from the
remark before Proposition~\ref{prop:ord}. The non-existence of singular points in the
exceptional divisor under blowup of a component follows
from the same remark together with the remark preceding Theorem~\ref{thm:aus}. The implications of the existence of a \factor{} on the singular locus, on the order function and  the fact
that $1$ is always a \factor{} are easy consequences of the definition.
Stability of exhaustive \factors{} is stated in Lemma~\ref{lem:ex}.
The existence and properties of the quotient is Theorem~\ref{thm:q}, 
and commutativity of quotient and transform is Lemma~\ref{lem:q}.
\end{proof}

%-------------------------------------------------
%             TRANSCRIPTION
%-------------------------------------------------

\section{Transcription}

We show in this section how the resolution problem represented by a gallimaufry can be transcribed into a scenario of our resolution game Salmagundy so that the transforms of the gallimaufry under blowups, descents and quotients correspond to the respective transforms of the scenario. This permits to transfer the search for a resolution algorithm for a given gallimaufry to the search of a winning strategy for Dido for the associated scenario.
By applying the reverse transcription from scenarios to gallimaufries, Dido resolves unconsciously a gallimaufry, despite the fact that she has never heard of algebraic varieties, ideal sheaves and blowups (after all, sheaf theory was not yet invented at the times of
Carthago).

The transcription has to be done in both directions. Dido's moves in the game will be reread geometrically: In particular, the choice of the nodes which represent the centers of her blowup move will be interpreted as the choice of a transversal subvariety of the habitat of a gallimaufry, i.e., the center of the blowup of the habitat, which is contained in the singular locus and induces the transform of the gallimaufry. 

If Dido opens a quest, Mephisto has to respond with a scenario. In this, he just has to obey the rules of the game. But for the transcription between the resolution of gallimaufries and the winning strategy for the game, Mephisto's answer will be given as the scenario defined by the respective modification of the gallimaufry which is specified by the quest (e.g., for a descent quest, it will be the scenario corresponding to the descent gallimaufry). It then has to be checked that this answer also obeys the rules of the game. In this sense, Mephisto's will assume the role of a ``Knecht'' of the algebraic geometer who tries to resolve a singularity: His response in the game is dictated by the reality of the singularities.\footnote{Of course, when taking the game abstractly, Mephisto may choose the moves at his taste.} 

As resolved scenarios correspond to conquered quests, it follows that a winning
strategy for the game induces a resolution strategy for gallimaufries.\\

A {\em stratification} of a topological space $X$ is
a finite partition of $X$ into locally closed, irreducible subsets, 
called {\em strata}, such that
the closure of any stratum is a union of strata. 
A stratum $A$ is {\em adjacent} to a stratum $B$ if it is contained
in the closure of $B$. A stratification $T'$ is a refinement of $T$
if every stratum of $T$ is a union of strata of $T'$.

For any finite
collection of closed sets, there is coarsest stratification such that
any closed set in the collection is a union of strata. Its strata
are the irreducible components of all intersections of the closed sets
in the collections and their complements. We call it the stratification
induced by the collection of closed sets.
Similarily, any upper semicontinuous function from $X$ to a finite, partially 
ordered set induces a stratification.

\begin{defn}
Let $(W,E)$ be a habitat, and let $(A,d)$ be a gallimaufry on $W$.
Let $T$ be a stratification of $W$ such that any divisor in $E$
is a union of strata and such that the order function of $(A,d)$ is constant 
in each stratum.
Then the {\em induced scenario} $C=C(T,A,d)$ is defined as follows:
\begin{enumerate}
\item the nodes of the underlying graph $\Gamma$ are the strata of $T$,
	and the directed edges between nodes are given by the 
	adjacency relation;
\item the dimension of a node is the dimension of the stratum;
\item the singular set is the set of strata with order $\ge 1$;
\item the order of a node is is the order of the stratum;
\item the jibs are the dense strata of the divisors $E_i$ in $E$;
\item the transversal nodes are the strata whose closure is transversal 
        to $E$;
\item the bound $B$ is an least common multiple for the degrees of 
	elements in a finite generating set of the Villamayor algebra $A$;
\item the dimension is $d$;
\item the set ${\cal M}$ is the set of all monomial factors of $A$.
\end{enumerate}
\end{defn}

%-------------------------------------------------

\begin{lem} \label{lem:blow}
Let $Z$ be a subvariety of $W$ which is transversal to $E$ and contained in the
singular locus of $(A,d)$. Assume that $Z$ is a union of strata in $T$. Let $(A,d)$ 
be the transformed gallimaufry under the blowup $f:W'\to W$ of $W$ along $Z$, 
and let $T'$ be a stratification of $W'$ at least as fine as the one 
induced by the set of exceptional divisors, the singular locus, 
the order function, and the preimages of strata in $T$. Then the scenario 
$C(T',A',d)$ is a transform of $C(T,A,d)$ under the blowup with center 
the node $\cc$ corresponding to the dense stratum of $Z$.
\end{lem}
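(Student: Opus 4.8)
The plan is to manufacture the data of a blowup transform --- a board $\Gamma'$ together with an embedding $i\colon\Gamma\to\Gamma'$ and a retract $u\colon\Gamma'\to\Gamma$ --- directly from the geometry of the blowup $f\colon W'\to W$, and then to verify the rules of Rule~\ref{rul:trans} and Rule~\ref{rul:refblow} one at a time, invoking for each the matching property of gallimaufries proved in Section~2. First I would take $\Gamma'$ to have the strata of $T'$ as nodes. Since $T'$ refines the preimage stratification, the image $f(s')$ of each stratum $s'$ of $T'$ lies in a unique stratum of $T$; this assignment is the retract $u$, and it is weakly monotone because $f$ is continuous. The embedding $i$ sends a stratum $s$ of $T$ to the dense stratum of the strict transform of $\overline{s}$, equivalently to the unique top--dimensional stratum of $u^{-1}(s)$, so that $u\circ i=\mathrm{id}$ and the first three items of Rule~\ref{rul:trans} hold. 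The dimension bookkeeping (Issues~\ref{srul:exdim} and \ref{srul:indim}) is where the blowup structure is used: off the center $f$ is an isomorphism, giving $\dim(i(s))=\dim(s)$; over $Z$ it is a projective bundle of relative dimension $n-1-\dim(Z)$, so for $s\le\cc$ the dense stratum of $f^{-1}(s)$ has dimension $\dim(s)+n-1-\dim(\cc)$, and in particular $e:=i(\cc)$ has dimension $n-1$.

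Next I would run through the scenario rules of Rule~\ref{rul:refblow}, in each case reading off the content from an already established fact. The dimension $d$ is unchanged by definition of the gallimaufry transform, and the bound $B$ may be kept because, by Lemma~\ref{lem:later} and the remark after Proposition~\ref{prop:ord}, a set of generating degrees of $A$ still generates $A'$. Admissibility (Issue~\ref{srul:center}) is immediate: $Z$ is transversal so $\cc\in\mathcal T$, and $Z\subseteq\sing(A,d)$ so $\cc\in\mathcal S$. The transversal--set rule (Issue~\ref{srul:adm}) and $\mathcal H'=i(\mathcal H)\cup\{e\}$ (Issue~\ref{srul:hand}) follow from the Proposition on blowups of habitats: strict transforms of transversal subvarieties remain transversal, and the exceptional divisor is adjoined to $E$, so the jibs of $C'$ are the dense strata $i(h)$ of the old divisors together with the dense stratum $e$ of the exceptional divisor. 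The containment $\mathcal S'\subseteq u^{-1}(\mathcal S)$ (Issue~8) holds since $\sing(A',d)\subseteq f^{-1}(\sing(A,d))$: off the exceptional divisor this is the isomorphism, and on it every point maps into $Z\subseteq\sing(A,d)$. Tightness (Issue~\ref{srul:tight}) is precisely the preservation of tightness under blowup from Lemma~\ref{lem:tight} and Theorem~\ref{thm:axioms}, and the rules for $\mathcal M'$ (Issues~\ref{srul:fact} and \ref{srul:complete}) translate the definition of \factor{} together with the order computation below and the stability of exhaustive \factors{} in Lemma~\ref{lem:ex}.

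The main analytic input is the order behaviour near the exceptional divisor (Issues~9 and 10). For a stratum $s'$ not below $e$, hence disjoint from the exceptional divisor, $f$ is an isomorphism and the controlled transform agrees with the pullback, so $\ord(s')=\ord(u(s'))$. For the node $e$ I would localize on a zoom $V$ of dimension $d$ through the generic point of $Z$: by Lemma~\ref{lem:rescom} restriction commutes with blowup, so $A'|_{V'}$ is the transform of $A|_V$, and passing to the associated ideal with control $(I,c)$ one has $f^\ast I=M^{c}I'$ with $M$ the ideal of the exceptional divisor. Since the order of $I$ at the generic point of $Z$ equals $c\cdot\ord(\cc)$, the order of $I'$ at the generic point of the exceptional divisor is $c\,(\ord(\cc)-1)$, that is $\ord(e)=\ord(\cc)-1$. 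Hence $e$ enters $\mathcal S'$ with this order exactly when $\ord(\cc)\ge 2$, which is Issue~9.

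I expect the genuinely delicate step to be the jib rule, Issue~\ref{srul:btr}. It is the combinatorial shadow of the assertion in Theorem~\ref{thm:axioms} that the blowup of a transversal component of extremal dimension leaves no singular points on the relevant exceptional intersection: when $Z$ lies below all jibs of a set $K$ and has dimension exactly $d-\mathrm{card}(K)$, the strict transforms $i(h)$, $h\in K$, must be separated from $\sing(A',d)$ along their common intersection. I would again argue on a zoom, where $Z$ is the smallest stratum cut out by the hypersurfaces indexed by $K$, so that after blowup the order along $\bigcap_{h\in K}i(h)$ drops below $1$; this is the same mechanism as the order drop in Issue~9, now applied in maximal codimension. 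The remaining constraints of Rule~\ref{rul:scen} are automatic, since $(A',d)$ is again a gallimaufry by Proposition~\ref{prop:trf} and the induced scenario of a gallimaufry is a scenario; therefore $C(T',A',d)$ is a legitimate blowup transform of $C(T,A,d)$ with center $\cc$.
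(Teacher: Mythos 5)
Your overall strategy is exactly the paper's: build the board transform $\Gamma'$ from the stratification $T'$ and check Rules~\ref{rul:trans} and \ref{rul:refblow} item by item against the facts established for gallimaufries in Section~2. The paper's own proof is much terser -- it declares most issues straightforward and explicitly treats only the dimension formula (Rule~\ref{rul:trans}, Issue~\ref{srul:indim}), the transversality of strict and total transforms (Rule~\ref{rul:refblow}, Issue~\ref{srul:adm}), and tightness (Issue~\ref{srul:tight}) -- so your write-up is a more complete version of the same argument, and your verifications of the board data, admissibility, $\mathcal H'$, $\mathcal S'\subseteq u^{-1}(\mathcal S)$, and the order computation $\ord(e)=\ord(\cc)-1$ via a zoom and Lemma~\ref{lem:rescom} are all sound.

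The one place your justification goes astray is Rule~\ref{rul:refblow}, Issue~\ref{srul:btr}. This is not an order-drop phenomenon and it is not the clause of Theorem~\ref{thm:axioms} about the order-$\infty$ locus (that clause concerns blowing up a $d$-dimensional component where the order is infinite). The correct mechanism is set-theoretic: inside a zoom $V$ of dimension $d$, the hypersurfaces indexed by $K$ meet transversally, so their common intersection with $V$ has dimension $d-\mathrm{card}(K)$ and the center is a component of it; after blowing up, in each affine chart at least one of the strict transforms is empty, so $\bigcap_{h\in K}i(h)$ contains no points over the center at all -- there is no order to compute there. The residual part of the claim, that no singular node below all $i(h)$ survives away from the exceptional divisor, comes from the structure of the original scenario rather than from any drop of $\ord$. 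Replacing your ``same mechanism as Issue~9, now in maximal codimension'' by this disjointness argument closes the only real gap; everything else matches the paper.
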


\begin{proof}
We have to check that Rule~\ref{rul:trans} and Rule~\ref{rul:refblow}
are fulfilled. Most issues are straightforward, so we treat
here only the more interesting ones.
Rule~\ref{rul:trans}, Issue~\ref{srul:indim} says that
$\dim(f^{-1}(Y))=\dim(Y)+\mathrm{codim}(Z)-1$,
if $Y$ is a locally closed subvariety of the center $Z$. 
This is a well-known property
of blowups along non-singular subvarieties.
Rule~\ref{rul:refblow}, Issue~\ref{srul:adm} says that the strict 
transform of a transversal subvariety or the total transform of a 
subvariety contained in the
center is transversal to the exceptional divisor. 
This is a well-known property of blowups
along centers transversal to an already existing exceptional divisor.
Item~\ref{srul:tight} is an immediate consequence of Theorem~\ref{thm:axioms},
stating that the transform of a tight gallimaufry is tight.
\end{proof}
%-------------------------------------------------

\begin{lem} \label{lem:rel}
Let $C$ be the scenario induced by a gallimaufry $(A,d)$ on a habitat
$(W,E)$. Let $C'$ be the scenario induced by the gallimaufry $(A,d)$
considered on the habitat $(W,E')$, where $E'=E\setminus \{H\}$ for some $H\in E$.
Then $C'$ is a relaxation scenario for $C$, releasing the node corresponding to
$H$.
\end{lem}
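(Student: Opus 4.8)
The plan is to induce both scenarios from the \emph{same} stratification $T$, so that $C$ and $C'$ live on a common board $\Gamma$; this is legitimate because $E'\subseteq E$, whence every divisor in $E'$ is still a union of strata of $T$, and (as I check next) the order function is still constant on strata. First I would dispatch the first three items of Rule~\ref{rul:relax}. The dimension $d$ and the bound $B$ depend only on the gallimaufry $(A,d)$, not on $E$, so they agree for $C$ and $C'$. The decisive observation is that a zoom for $(A,d)$ on $(W,E)$ is automatically a zoom for $(A,d)$ on $(W,E')$: dropping $H$ only weakens the transversality requirement and the requirement of not lying in a hypersurface, while the condition $A(V)\subseteq A$ is untouched. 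Since such a zoom exists at every singular point (because $(A,d)$ is a gallimaufry on $(W,E)$) and the order of a gallimaufry does not depend on the zoom (Proposition~\ref{prop:ord}), the singular loci and order functions of $C$ and $C'$ coincide; hence ${\cal S}'={\cal S}$, $\ord'=\ord$, and all node dimensions match. Finally, the jibs are the dense strata of the divisors, so ${\cal H}'={\cal H}\setminus\{h_H\}$ with $h_H$ the dense stratum of $H$, i.e.\ the released set is ${\cal J}=\{h_H\}$.

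Next I would treat the transversal sets, Issue~\ref{srul:relax,T} of Rule~\ref{rul:relax}. Transversality to $(W,E)$ trivially implies transversality to $(W,E')$, giving ${\cal T}\subseteq{\cal T}'$. If $z$ is remote from $h_H$, i.e.\ $\oo z\cap H=\emptyset$, then $H$ meets no point of $\oo z$, so the transversality conditions for $E$ and $E'$ agree along $\oo z$ and $z\in{\cal T}'$ forces $z\in{\cal T}$. The only substantial case is $z\le h_H$, i.e.\ $\oo z\subseteq H$, where I must upgrade transversality to $E'$ to transversality to $E=E'\cup\{H\}$. Fix $p\in\oo z$ and regular parameters $u_1,\dots,u_n$ witnessing transversality to $E'$, with $\oo z=\{u_1=\cdots=u_m=0\}$ and every $E_i\in E'$ through $p$ equal to $\{u_j=0\}$ for some $j$. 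Since $\oo z\subseteq H$, a local equation $g$ of $H$ lies in $(u_1,\dots,u_m)$, say $g=\sum_{i\le m}a_iu_i$. The normal crossings property of $E$ forces $dg(p)$ to be linearly independent from the differentials of the $E'$-equations through $p$; as $dg(p)=\sum_{i\le m}a_i(p)\,du_i(p)$, this yields an index $i\le m$ with $a_i(p)\neq 0$ for which $\{u_i=0\}$ is not among the hypersurfaces of $E'$. Replacing this $u_i$ by $g/a_i$ keeps $\oo z$ and all $E'$-hypersurfaces coordinate and makes $H$ the coordinate hyperplane $\{g/a_i=0\}$, so $\oo z$ is transversal to $E$ at $p$ and $z\in{\cal T}$. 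This local coordinate change is the main obstacle of the proof.

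Finally I would verify the last item, that ${\cal M}'$ consists of the restrictions to ${\cal H}'$ of the \factors{} in ${\cal M}$; restricting a \factor{} to ${\cal H}'$ just deletes the coefficient of $H$ from the associated monomial $S=\sum_{E_i\in E}c_iE_i$. If $S$ is a \factor{} of $(A,d)$ on $(W,E)$, then on a common zoom $V$ the order function $\ord_{A|_V}$ dominates the monomial function of $S$; deleting the nonnegative $H$-coefficient only lowers that function, so $S-c_HH$ is a \factor{} on $(W,E')$. Conversely, given a \factor{} $S'$ on $(W,E')$, I would regard it as the monomial $S'$ on $(W,E)$ with zero $H$-coefficient. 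Choosing at each singular point a zoom $V$ for $E$ (which is also a zoom for $E'$) and using that the \factor{} property is independent of the zoom (Theorem~\ref{thm:wlo} and the ensuing remark), the property $S'$ verified on some $E'$-zoom holds on $V$ as well; since the $H$-coefficient is zero the monomial function on $V$ is unchanged, so $S'$ is a \factor{} on $(W,E)$ whose restriction to ${\cal H}'$ is again $S'$. These two inclusions give ${\cal M}'=\{m|_{{\cal H}'}:m\in{\cal M}\}$, and all items of Rule~\ref{rul:relax} are satisfied.
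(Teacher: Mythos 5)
Your proof is correct and follows essentially the same route as the paper: a direct verification of Rule~\ref{rul:relax}, whose only substantive point is that a subvariety transversal to $E'$ and either contained in $H$ or disjoint from it is transversal to $E$. The paper dismisses that point as obvious, whereas you supply the coordinate-change argument (and also spell out the check for the \factors{}, which the paper leaves implicit).
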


\begin{proof}
By comparing with Rule~\ref{rul:relax}, it is clear that it suffices
to check the following property: if $Z$ is transversal to $E\setminus\{H\}$,
and $Z$ is either contained in $H$ or disjoint to it,
then $Z$ is transversal to $E$. But this is obvious. 
\end{proof}

%-------------------------------------------------

\begin{lem} \label{lem:des}
Let $C$ be the scenario induced by a tight gallimaufry $(A,d)$ on a habitat
$(W,E)$. Let $C'$ be the scenario induced by the descent gallimaufry 
$(A,d-1)$. Let $T'$ be refinement of the stratification $T$ of $C$
such that the order of $(A,d-1)$ is constant along each stratum. Then
$C(T',A,d-1)$ is a descent scenario for $C(T,A,d)$.
\end{lem}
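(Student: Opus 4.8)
The plan is to check that $C(T',A,d-1)$ meets the requirements of a descent scenario for $C(T,A,d)$ laid down in Rule~\ref{rul:down}: that $C(T,A,d)$ is tight, that the board of $C(T',A,d-1)$ is a refinement of the board of $C(T,A,d)$, and that $d_1=d-1$ together with $\mathcal S_1=\mathcal S$, $\mathcal H_1=\mathcal H$ and $\mathcal T_1=\mathcal T$. Since $T'$ properly subdivides the strata on which the order of $(A,d-1)$ is non-constant, these three set equalities cannot be read node-wise; I would read them, as elsewhere for refinements, as equalities of the associated unions of strata in $W$, transported across the two boards by the embedding $i$ and the retract $u$ supplied by the refinement $T'$ of $T$.

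First I would dispose of the direct conditions. Tightness of $C(T,A,d)$ is immediate from the hypothesis: $(A,d)$ is tight, so its order function is $\equiv 1$ on the singular locus (Theorem~\ref{thm:axioms}), whence the order function of the induced scenario is constant equal to $1$ on $\mathcal S$. The equality $d_1=d-1$ holds by construction, because $C(T',A,d-1)$ is by definition the scenario induced by the gallimaufry of dimension $d-1$. That the board $\Gamma'$ of $C(T',A,d-1)$ refines the board $\Gamma$ of $C(T,A,d)$ follows from $T'$ refining $T$: the dense-refinement map furnishes the embedding $i$ and the containing-stratum map the retract $u$, and the clauses of Rule~\ref{rul:trans} — in particular the dimension clause $\dim(i(s))=\dim(s)$, valid because a stratum and its dense refinement have equal dimension — are verified just as the corresponding points of Lemma~\ref{lem:blow}.

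Next come the singular and handicap sets. Because descent leaves the singular locus unchanged (Theorem~\ref{thm:axioms}), $(A,d)$ and $(A,d-1)$ share one singular locus $\Sigma$. Tightness of $(A,d)$ makes $\mathcal S$ the set of strata of $T$ lying in $\Sigma$; and since the order function of $(A,d-1)$ takes values $\ge 1$ throughout $\Sigma$ (Proposition~\ref{prop:ord}), $\mathcal S_1$ is the set of strata of $T'$ lying in $\Sigma$. Thus $\mathcal S_1=u^{-1}(\mathcal S)$ and both describe the subset $\Sigma$ of $W$. For the handicap, a descent is available only with empty handicap, i.e. with $E=\varnothing$ (Lemma~\ref{lem:tight}); hence the jibs — the dense strata of the divisors in $E$ — are absent for both scenarios and $\mathcal H=\mathcal H_1=\varnothing$.

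The main obstacle is the transversal set, $\mathcal T_1=\mathcal T$. With $E=\varnothing$ a stratum is transversal precisely when its closure is non-singular, so I must show that the subdivision of $\Sigma$ imposed by the order of $(A,d-1)$ neither destroys nor creates non-singular closures relative to $T$. The delicate input is that the top stratum of this subdivision, the locus of order $\infty$ of $(A,d-1)$, is non-singular of pure dimension $d-1$ (Theorem~\ref{thm:axioms}, matching Rule~\ref{rul:scen}, Issues~\ref{srul:max} and \ref{srul:out} for $C(T',A,d-1)$), so it contributes only transversal nodes, while the closures of the remaining, lower-order pieces agree with the transversal structure already recorded in $\mathcal T$. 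Making this last comparison precise — equivalently, checking that $T'$ may, and should, be taken compatible with the transversal stratification so that the only transversal nodes of $T'$ are those dictated by $\mathcal T$ — is where the real work lies; once it is in place, each clause of Rule~\ref{rul:down} has been verified and $C(T',A,d-1)$ is a descent scenario for $C(T,A,d)$.
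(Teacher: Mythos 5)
Your verification is essentially the argument the paper intends: its own proof of Lemma~\ref{lem:des} is literally the single word ``Obvious,'' so the content of the lemma is exactly the clause-by-clause check of Rule~\ref{rul:down} that you carry out, and every step you do complete (tightness from Theorem~\ref{thm:axioms}, $d_1=d-1$ by construction, the refinement structure of the boards via Rule~\ref{rul:trans}, ${\cal S}_1={\cal S}$ from the invariance of the singular locus under descent, ${\cal H}_1={\cal H}=\emptyset$ from the empty-handicap hypothesis of the descent call) is correct. The one point you leave open, ${\cal T}_1={\cal T}$, is not actually where real work lies once you apply the reading of the set equalities that you yourself adopt at the outset. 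Transporting through the maps $i$ and $u$ of the refinement, the condition to check is $i(s)\in{\cal T}_1\Leftrightarrow s\in{\cal T}$ for $s$ a stratum of $T$; but $i(s)$ is the dense stratum of $T'$ inside $s$, so $\overline{i(s)}=\overline{s}$, and transversality to $E$ (non-singularity, when $E=\emptyset$) depends only on the closure --- hence the equivalence is immediate. Strata of $T'$ not in the image of $i$ are exactly the ``new transversal nodes'' that Rule~\ref{rul:refblow}, Issue~\ref{srul:adm} explicitly permits under a refinement, so no compatibility condition on $T'$ needs to be arranged; in particular your worry that the subdivision by the order of $(A,d-1)$ might ``destroy or create'' transversal closures does not arise, since the closures of the old strata are unchanged and the new strata are simply read off from the definition of the induced scenario (with the order-$\infty$ locus non-singular of pure dimension $d-1$ by Theorem~\ref{thm:axioms}, as you note). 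With that observation your proof is complete and agrees with the paper's.
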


\begin{proof}
Obvious.
\end{proof}

%-------------------------------------------------

\begin{lem} \label{lem:qg}
Let $(A,d)$ be a gallimaufry on a habitat $(W,E)$, and let $T$ be a
stratification of $W$ such that all hypersurfaces of $E$ and the singular
locus are unions of strata, and such that the order function is constant in each stratum.
Let $S$ be a \factor{} of $(A,d)$. Let $q\in\Q_{\ge 0}$,
and let $(A',d)$ be the quotient gallimaufry. Then $C(T,A',d)$ is 
a $q$-quotient scenario for $C(T,A,d)$), and $S$ is the
divided \factor.
\end{lem}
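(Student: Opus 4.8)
The plan is to verify directly that the induced scenario $C(T,A',d)$ of the quotient gallimaufry coincides, item by item, with the $q$-quotient of $C(T,A,d)$ taken with respect to the monomial factor given by $S$, as prescribed by Rule~\ref{rul:quot}. The essential inputs are Theorem~\ref{thm:q}, which supplies $\sing(A',d)$ and $\ord_{A',d}$, together with the translation dictionary furnished by the definition of the induced scenario.

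First I would check that $T$ is a legitimate stratification for $(A',d)$, so that $C(T,A',d)$ is even defined. The hypersurfaces of $E$ are unchanged, hence still unions of strata; and by Theorem~\ref{thm:q} the set $\sing(A',d)=\{p\in\sing(A,d):\ord_{A,d}(p)-s(p)\ge q\}$ and the function $\ord_{A',d}=\min(\ord_{A,d},(\ord_{A,d}-s)/q)$ are determined by $\ord_{A,d}$ and the monomial function $s$ of $S$, both of which are constant on strata of $T$ (the former by hypothesis, the latter because each $E_i$ is a union of strata). Thus $\sing(A',d)$ is a union of strata and $\ord_{A',d}$ is constant along strata, so the \emph{same} board $\Gamma$ (same nodes, dimensions and adjacency) underlies both scenarios, as required since the $q$-quotient is given on $\Gamma$.

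Next I would march through the six items of Rule~\ref{rul:quot}. The dimension ($d_1=d$), the handicap (${\cal H}_1={\cal H}$, the jibs being the dense strata of the unchanged $E_i$), and the transversal set (${\cal T}_1={\cal T}$, since transversality to $(W,E)$ does not involve $A$) are immediate from the induced-scenario definition. The singular set ${\cal S}_1=\{s\in{\cal S}:\ord(s)-m(s)\ge q\}$ and the order $\ord_1(s)=\min\{\ord(s),\tfrac1q(\ord(s)-m(s))\}$ translate verbatim into the two formulas of Theorem~\ref{thm:q}, once one observes that the extension $m(s)=\sum_{h\ge s}m(h)$ of the factor function evaluates to the monomial function $s(p)$ at a point $p$ of the stratum. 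For the bound I would invoke Proposition~\ref{prop:ord}: the denominators of the values of $\ord_{A',d}$ divide the generating degrees of $A'$, and since $\ord_{A',d}$ arises from $\ord_{A,d}$ (denominators dividing $B$) by subtracting $s$ and dividing by $q$, the least common multiple of those degrees is exactly the positive generator of $\Z\cap\tfrac{B}{q}\Z$, which is the prescription $B_1\Z=\Z\cap\tfrac{B}{q}\Z$.

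The genuinely delicate item, which I expect to be the main obstacle, is the factor set ${\cal M}_1=\{\tfrac1q(f-m):f\in{\cal M}\}$: I must show that the monomial factors of $A'$ are precisely the functions $\tfrac1q(f-s)$, as $f$ ranges over the monomial factors of $A$ with $f\ge s$. Here Lemma~\ref{lem:quot} enters — it identifies the quotient ideal and records that its order function is $\ord_{I,c}-s$ — and the scaling by $q$ must be tracked through the local construction of Theorem~\ref{thm:q} on a zoom. Writing $f=qR+s$ converts the condition that $R$ be a factor of $A'$, namely $r\le\ord_{A',d}$ on $\sing(A',d)$, into $f\le\ord_{A,d}$ via the $(\ord_{A,d}-s)/q$ branch of the minimum; the remaining care is to confirm that the $\ord_{A,d}$ branch of the minimum and the passage between the singular loci $\sing(A',d)\subseteq\sing(A,d)$ neither enlarge nor shrink this correspondence. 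Finally, the assertion that $S$ is the divided factor is simply the statement that the factor $m$ of Rule~\ref{rul:quot}, with respect to which the $q$-quotient is formed, is the monomial function of $S$.
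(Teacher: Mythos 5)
Your proof is correct and follows the same route as the paper, which disposes of this lemma in one line as ``a direct consequence of Theorem~\ref{thm:axioms}'' (itself a restatement of Theorem~\ref{thm:q}); you have simply written out the item-by-item matching against Rule~\ref{rul:quot} that this citation implicitly requires. Your extra care with the bound $B_1$ and the factor set ${\cal M}_1$ goes beyond what the paper records, but it is consistent with the definitions and does not change the argument.
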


\begin{proof}
This is a direct consequence of Theorem~\ref{thm:axioms}.
\end{proof}

%-------------------------------------------------

\begin{thm}\label{thm:resgall}
If there is a winning strategy for Dido in the game Salmagundy, gallimaufries
admit a resolution.
\end{thm}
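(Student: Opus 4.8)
The strategy of the proof is to let Dido play her winning strategy on the scenario attached to the given gallimaufry, while forcing Mephisto to play \emph{honestly}, i.e.\ to answer every quest by the scenario that the algebra dictates. Fix a gallimaufry $(A,d)$ on a habitat $(W,E)$. First I would choose a stratification $T_0$ of $W$ for which every divisor in $E$ is a union of strata and along whose strata the order function of $(A,d)$ is constant; such a $T_0$ exists because the order function is upper semicontinuous with finitely many values (Theorem~\ref{thm:axioms}) and $E$ is a finite collection of closed sets. This produces an initial scenario $C_0=C(T_0,A,d)$, which I hand to the Umpire as the initial scenario of the main quest.

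Next I would define Mephisto's honest strategy round by round. A blowup center $\cc$ chosen by Dido is reinterpreted as the closure $Z$ of the stratum it names; admissibility gives $\cc\in{\cal T}$, so $Z$ is transversal to $E$, and when $\cc\in{\cal S}$ the subvariety $Z$ lies in the singular locus, so the transformed gallimaufry $(A',d)$ exists and Mephisto answers with its induced scenario $C(T',A',d)$ for a sufficiently fine refinement $T'$. A relaxation call is answered by the scenario of $(A,d)$ on the reduced habitat $(W,E\setminus\{H\})$; a descent call by the scenario of the descent $(A,d-1)$; a quotient call by the scenario of the quotient gallimaufry scaled by $q$; and the one-way transversality call by its uniquely determined response of Rule~\ref{rul:jib}. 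That each of these responses is a \emph{legal} move is precisely the content of the transcription lemmas: Lemma~\ref{lem:blow} for blowups, Lemma~\ref{lem:rel} for relaxations, Lemma~\ref{lem:des} for descents, and Lemma~\ref{lem:qg} for quotients each verify that the induced scenario satisfies the relevant rules of the game.

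The main obstacle is Rule~\ref{rul:comm}, the commutativity relations: whenever a subordinate quest was opened before a blowup, the scenario Mephisto hands back afterwards must be simultaneously a transform of the old subordinate scenario \emph{and} the subordinate (descent, relaxation, or quotient) of the transformed superordinate scenario. I would extract this from the algebraic commutativity already established in the gallimaufry section -- that blowup transform commutes with descent (Lemma~\ref{lem:comm}), with quotient (Lemma~\ref{lem:q}), and with restriction and the division of \factors{} (Lemma~\ref{lem:rescom}, Lemma~\ref{lem:stab}). In each case the two paths around the commutative square of Rule~\ref{rul:comm} compute the \emph{same} gallimaufry, hence the same induced scenario; so the two prescriptions for $(C')_1=(C_1)'$ agree, and Mephisto's honest answer is admissible. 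Taken together, the honest responses constitute one admissible play of Mephisto in the abstract game.

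Finally I would invoke the hypothesis. A winning strategy beats \emph{every} admissible play of Mephisto, in particular the honest play just constructed; hence the main quest reaches a final scenario after finitely many rounds. A final scenario has empty singular set ${\cal S}$, which under the transcription means $\sing(A',d)=\emptyset$ for the last transform of the gallimaufry. Since Dido, aiming to empty ${\cal S}$, selects her main-quest centers inside ${\cal S}$, each one corresponds to a transversal subvariety contained in the current singular locus; the finite list of these centers is therefore a finite sequence of blowups along transversal centers inside the singular locus whose last singular locus is empty -- that is, a resolution of $(A,d)$ in the sense of the gallimaufry definition (resolution of the Rees algebra $A$). As $(A,d)$ was arbitrary, every gallimaufry admits a resolution, which is the assertion.
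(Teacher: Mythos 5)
Your proposal is correct and follows essentially the same route as the paper: transcribe the gallimaufry into an initial scenario via a stratification adapted to $E$ and the order function, have Mephisto answer every move with the scenario induced by the corresponding algebraic operation (justified by Lemmas~\ref{lem:blow}, \ref{lem:rel}, \ref{lem:des}, \ref{lem:qg}), and conclude that the winning strategy, applied against this particular admissible play, empties the singular locus after finitely many transversal blowups. Your explicit verification of the commutativity relations of Rule~\ref{rul:comm} via Lemmas~\ref{lem:comm}, \ref{lem:q}, \ref{lem:rescom} and \ref{lem:stab} is a point the paper's proof leaves implicit, but it uses exactly the same ingredients and does not change the argument.
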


\begin{proof}
Let $(A,d)$ be a gallimaufry on an habitat $(W,E)$. 
Let $G$ be the set containing as a single element this gallimaufry.
Throughout, it will be a set of gallimaufries on the same manifold $W$
(the set $E$ may vary).
Let $T$ be a stratification of $W$ induced by the divisors in $E$, 
by the singular loci of the gallimaufries in $B$,
and by the order functions of the gallimaufries in $B$.
Let $C=C(T,A,d)$ be the induced scenario, and let $B$ be the underlying 
board. This is the initial scenario that the Umpire provides for the game. 
By assumption, Dido has a winning strategy, and she uses it to win this game.

In her move, Dido may either specify a transversal and singular
node $\cc$ for a blowup move, apply a one way quest, or place a call for a quest. In case of blowup, the closure of the stratum corresponding to $\cc$ is taken as the center $Z$
of a blowup $W'\to W$ of $W$. Let $G'$ be the set of all
transformed gallimaufries $(A',d)$.
Let $T'$ be a stratification induced by the divisors in $E'$, the singular
loci, the order functions, and the preimages of strata in $T$.
Then, for any gallimaufry $(A',d)$ in $G'$, let $C'=C(A',d,T')$ 
be the induced scenario. Then $C'$ is a transform
of $C$ by Lemma~\ref{lem:blow}, and Mephisto's move is given by the
blowup transforms of the board and the scenarios $C'$.

If the move of Dido was a relaxing call releasing the jib $h$,
the respective hypersurface $H$ is removed from $E$ providing a new habitat
$(W',E')$ and a new gallimaufry $(A',d)$, which is just $(A,d)$ considered on this
new habitat. It is added to $G$.
The stratification $T$ is also compatible with $(A',d)$, and by
Lemma~\ref{lem:rel}, the scenario $C(T,A',d)$ is a relaxation response
for $C(T,A,d)$. Mephisto's move consists in giving the trivial refinement,
the unaltered scenarios for all quests in $G$, the scenario
$C'$ for the relaxation quest, and the scenarios $C$ for the main quest and $C'$ for the relaxation quest.

If the move was a call for a descent quest, then $(A,d)$ must be tight and $(A,d-1)$
is a descent gallimaufry. It is added to $G$. 
Let $T'$ be the refinement induced by the new
order function. By Lemma~\ref{lem:des}, $C(T',A,d-1)$ is a descent scenario
for $C$. Mephisto's move consists in refining the board and the scenarios
for the old quests and providing a response $C(T',A,d-1)$ for the descent quest.

If the move was the construction of a transversality quest, then the respective hypersurfaces are removed from $E$ providing a new habitat $(W,E')$ and a new gallimaufry $(A',d)$, where $A'$ is the sum of $A$ and the Villamayor algebra generated (in degree~1) by the sum of the ideals of these hypersurfaces. The stratification $T$ is
compatible with $(A',d)$, and the scenario $C(T,A',d)$ is a transversality scenario.

If the move was a \factorization{} call with divided \factor{} $m$, 
then $m$ is also a \factor{} of the gallimaufry $(A,d)$. Let $(A',d)$
be the quotient gallimaufry, appropriately scaled. 
By Lemma~\ref{lem:qg}, $C(T,A',d)$ is a scaled quotient scenario.

Now it is again Dido's turn. Since Dido has a winning strategy, she wins
the game after a finite number of steps. By then, the singular locus 
of the successive transforms of $(A,d)$ must have become empty, and the
gallimaufry is resolved.
\end{proof}

%-------------------------------------------------
%           WINNING STRATEGY
%-------------------------------------------------

\section{A Winning Strategy}

In this section we prove that Dido has a winning strategy for the 
resolution game Salmagundy defined in section~\ref{sec:rules}. Again, 
there are no references to algebraic concepts such as ideals, varieties etc.

%-------------------------------------------------

We say that a quest is {\em strictly} won if the blowup centers chosen by Dido always lie 
in the singular set ${\cal S}$.\footnote{So being remote from ${\cal S}$ is excluded. To have the centers inside ${\cal S}$ is required for the induction argument to work, see the proof of Lemma~\ref{lem:descent}.} Our goal is to show that any quest admits a strict winning strategy. Along the way, it will be necessary to introduce and play various auxiliary quests, 
whose winning strategies require centers which may lie outside the singular set of some of the other auxiliary quests. It will, however, be ensured that the centers always lie in the singular set of the main quest.

%-------------------------------------------------

\begin{lem} [Hironaka] \label{lem:mon}
There is a strict winning strategy for monomial scenarios.\footnote{The lemma asserts that a quest can be strictly won if its initial scenario is monomial, i.e., has a complete \factor. This result corresponds to the so called resolution of varieties in the monomial case.}
\end{lem}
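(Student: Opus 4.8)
The plan is to work entirely with the combinatorial order function. Since the chosen factor $m$ is complete, $\ord(s)=m(s)=\sum_{h\ge s}m(h)$ on all of $\mathcal S$, so the scenario is governed solely by the jib multiplicities $m(h)$, and throughout I would keep verifying the standing fact that every jib satisfies $m(h)<1$ (equivalently, no divisor is singular by itself). This is exactly what makes the monomial case resolvable: a divisor's own multiplicity can never be lowered by a blowup, so a jib with $m(h)\ge1$ would stay in $\mathcal S$ forever. Under this hypothesis the invariant I would use is the lexicographic pair $(M,\nu)$, where $M=\max_{s\in\mathcal S}\ord(s)$ is the maximal order and $\nu=\#\{s\in\mathcal S:\ord(s)=M\}$ counts the strata realizing it. Both live in well-ordered sets ($M\in\tfrac1B\Z\cap[1,M_0]$, $\nu\in\N$), and the goal is to show that each of Dido's blowups strictly lowers $(M,\nu)$, so that after finitely many rounds $M<1$ and $\mathcal S=\emptyset$.

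For the choice of center I would deliberately \emph{not} take a deepest maximal-order stratum: blowing that up can raise the order, since by Rule~\ref{rul:refblow}, Issue~\ref{srul:complete} the new exceptional jib inherits multiplicity $\ord(\cc)-1$, which added to the surviving face- and transversal divisors may exceed $M$. Instead, given $s^\ast$ with $\ord(s^\ast)=M$, I would sort the jibs above it as $m_1\ge\cdots\ge m_k$, let $j$ be least with $P_j:=m_1+\cdots+m_j\ge1$, and take as center the unique transversal singular node $\cc$ lying above $s^\ast$, below the $j$ largest of these jibs, with $\dim(\cc)=d-j$. Its existence and uniqueness are furnished precisely by Rule~\ref{rul:scen}, Issue~\ref{srul:new}, applied to $\mathcal K=\{h_1,\dots,h_j\}$ (whose multiplicities sum to $P_j\ge1$), and transversality is the equality case of Rule~\ref{rul:scen}, Issue~\ref{srul:istr}; thus $\cc\in\mathcal S\cap\mathcal T$ is admissible and lies in $\mathcal S$, giving a \emph{strict} win. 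Because $m(h)<1$ forces $j\ge2$, the center $\cc$ is never a jib.

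The heart of the argument is the monotonicity estimate. The exceptional jib $e$ acquires multiplicity $P_j-1<1$ (as $P_j=P_{j-1}+m_j<1+1$), while the old jibs keep theirs, so all jib multiplicities stay below $1$ and $M$ cannot jump. For any stratum $s'\le\cc$ in the center, with $\ord(s')=P_j+\sum_{h\in L(s')}m(h)$ where $L(s')$ are the transversal divisors at $s'$, a point of the exceptional node meets $e$, at most $j-1$ of the strict transforms of $h_1,\dots,h_j$, and all of $L(s')$; choosing the chart dropping the smallest $m_j$ shows the maximal new order over $s'$ equals $(P_j-1)+P_{j-1}+\sum_{L(s')}m=\ord(s')-(1-P_{j-1})<\ord(s')\le M$. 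Hence every newly created stratum has order strictly below $M$, whereas strata off the center retain their order by the blowup rule preserving orders away from $e$. Consequently no order-$M$ stratum is produced and at least $s^\ast$ (together with every maximal-order stratum below $\cc$) is destroyed, so $\nu$ strictly drops; once $\nu$ reaches $0$, $M$ itself drops. Since the transform is again a scenario, the same selection reapplies in the next round.

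I expect the main obstacle to be exactly this monotonicity computation — pinning down a center that cuts out \emph{just enough} of the largest divisors to cross the threshold $1$, so that it is simultaneously admissible and transversal (handled by Issues~\ref{srul:new} and~\ref{srul:istr}) \emph{and} forces a uniform order drop over the whole center through the gap $1-P_{j-1}>0$, rather than the naive ``blow up the worst stratum'', which genuinely fails. Termination then follows at once from well-foundedness of $(M,\nu)$, using that the multiplicities $m(h)$, and hence all attained orders $M$, remain bounded and confined to $\tfrac1B\Z$.
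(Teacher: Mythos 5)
Your per-blowup analysis is close in spirit to the paper's: the set $\{h_1,\dots,h_j\}$ with $j$ minimal such that $P_j\ge 1$ is exactly a \emph{minimal critical set} in the paper's terminology, the node $\cc$ furnished by Rule~\ref{rul:scen}, Issue~\ref{srul:new} (transversal by the equality case of Issue~\ref{srul:istr}) is the paper's center, and your strict drop $\ord'(s')\le (P_j-1)+P_{j-1}+\sum_{L}m<\ord(u(s'))$ is the same inequality the paper extracts from $m'(e)=\sum_{h\in{\cal K}}m(h)-1<m(h)$ via minimality of ${\cal K}$. But your global bookkeeping has a genuine gap. In a blowup move Mephisto supplies the new board, and Rule~\ref{rul:trans} only requires $i(s)$ to be the unique maximal element of $u^{-1}(s)$: the fibre $u^{-1}(s)$ may contain arbitrarily many further nodes, and by Rule~\ref{rul:refblow} every singular node $s'\not\le e$ inherits $\ord(s')=\ord(u(s'))$. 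Hence a single order-$M$ stratum remote from your center can be split by Mephisto into many order-$M$ strata, so $\nu$ can \emph{increase} and the pair $(M,\nu)$ is not a decreasing invariant; the termination argument collapses. The paper avoids this by doing the accounting on the jib set rather than on strata: it tracks the multiplicities $\sum_{h\in{\cal K}}m(h)$ of critical sets ${\cal K}\subseteq{\cal H}$. Since ${\cal H}$ grows by exactly one element per blowup and every new critical set arises from an old one by substituting $e$ for a jib of the minimal critical set just blown up --- with strictly smaller multiplicity --- this data is well-founded and immune to Mephisto's refinements. You would need to replace $\nu$ by an invariant of this combinatorial kind.

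A second, conceptual error is your ``standing fact'' that $m(h)<1$ for every jib. It is false: Rule~\ref{rul:refblow}, Issue~\ref{srul:complete} assigns the new jib $m'(e)=\ord(\cc)-1$, which is $\ge 1$ whenever $\ord(\cc)\ge 2$, so jibs of multiplicity $\ge 1$ occur already after the first blowup of a stratum of order $\ge 2$; correspondingly $j=1$ (a singleton critical set) is a case that genuinely arises and the paper handles it. Fortunately your order estimate does not really need the assumption --- for $j=1$ Rule~\ref{rul:refblow}, Issue~\ref{srul:btr} still gives $\ord'(s')\le(m_1-1)+\sum_L m<\ord(u(s'))$ --- but the claim that such a jib ``would stay in ${\cal S}$ forever'', and the conclusion that $\cc$ is never a jib because $j\ge 2$, should be removed rather than ``verified throughout''.
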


%-------------------------------------------------

\begin{proof}
Let $\mathfrak{Q}$ be a quest with a complete \factor{} $m:{\cal H}\to\Q$.
Let $C$ be a scenario for $\mathfrak{Q}$ on some board $\Gamma$,
and assume that $\mathfrak{Q}$ is not won yet (i.e., the singular set $\cal S$ of $C$ 
is not empty). We define a critical set for $C$ as a subset ${\cal K}$ of ${\cal H}$ such that 
there is a singular node of $C$ smaller than or equal to every node in 
${\cal K}$.\footnote{Singular nodes are expected candidates for centers of blowups. 
Dido has to ensure by auxiliary blowups that the center of preference becomes 
transversal to all hypersurfaces of $\cal H$ it meets, i.e., those from nodes in ${\cal K}$. 
Only after this preparation it can be chosen as a center. 
The first step in the resolution is therefore to separate the stratum Dido 
would like to choose as center from the hypersurfaces from ${\cal K}$. 
In the geometric situation, this is done by means of the transversality 
ideal \cite{Encinas_Hauser:02}, \cite{Encinas_Villamayor:97b}.} 
For monomial scenarios as here (i.e., those with a complete \factor{} $m$), 
this is the case if and only if $\sum_{h\in {\cal K}}m(h)\ge 1$. Observe that as long as 
$\cal S$ is non-empty there exist critical sets. Let $N=N_{\cal K}$ be the set of singular nodes of $C$ that are maximal among the singular nodes lying below all $h\in {\cal K}$. 
By Rule~\ref{rul:scen}, Issue~\ref{srul:new}, the nodes in $N$ have dimension $d-\mathrm{card}({\cal K})$, and any two nodes in $N$ have no node in $N$ below both of them. By Rule~\ref{rul:refblow}, Issue~\ref{srul:istr}, it follows that the nodes in $N$ are transversal, i.e., belong to $\cal T$. By Rule~\ref{rul:refblow},
Issue~\ref{srul:btr}, the iterated blowup transform of $C$ with centers all nodes in $N$ (in any order; the individual blowups do not effect each other because
there is no common node below two nodes in $N$) achieves that
the transforms of the jibs in ${\cal K}$ do not have a common node below them.

An {\em elementary step} for a choosen minimal critical set ${\cal K}$ consists in blowing up successively all nodes in $N=N_{\cal K}$ (this, of course, represents a sequence of moves of Dido, with respective responses by Mephisto). We claim that quests with a complete \factor{} can be won by a concatanation of suitable elementary steps.

Define the {\em multiplicity} of a critical set ${\cal K}$  as $\sum_{h\in {\cal K}}m(h)$.
As long as $\mathfrak{Q}$ is not yet resolved, Dido can and will choose a minimal critical set ${\cal K}$ for the actual scenario $C$ of $\mathfrak{Q}$. She then applies the corresponding elementary step to $C$. Let us assume that the elementary step consists only of one blowup; the general case is similar. Let $C'$ be the blowup transform of $C$. Then any critical set of $C'$ is either the set of transforms of the jibs of some critical set ${\cal K}_1$ of $C$ which is not a superset of ${\cal K}$ -- we call this an ``old critical set'' --, or the set of transforms of the jibs of some critical set ${\cal K}_2$ of $C$ which is a superset of ${\cal K}$, where one of the jibs in ${\cal K}$ is replaced by the exceptional jib $e=i(\cc)$ -- we call this a ``new critical set''. The multiplicity of an old critical set of $C'$ is equal to the multiplicity of the preceding critical set of $C$; the multiplicity of a new critical set of $C'$ is smaller than the multiplicity of the preceding critical set of $C$, because 

\[m(e)=\sum_{h'\in {\cal K}} m(h')-1 =m(h)+\sum_{h'\in {\cal K}\setminus\{ h\}} m(h')-1 <m(h),\]

\noindent where the last inequality is a consequence of the minimality of ${\cal K}$.
If we identify the old critical sets of $C'$ with the corresponding critical sets on $C$, then we can say that an elementary step replaces some critical sets by new critical sets, each of them of smaller multiplicity than the one which was replaced. It follows that in any sequence of elementary steps all critical sets disappear after finitely many iterations. This implies that the singular set $\cal S$ of $C$ has become empty, i.e., that $C$ is resolved and $\mathfrak{Q}$ is won.
\end{proof}

%-------------------------------------------------

\begin{lem} [Relaxation] \label{lem:rtight} 
Let $n\ge 0$ be an integer. If there is a strict winning strategy for tight quests of dimension $n$, then there is a strict winning strategy for all quests of dimension $n$.
\end{lem}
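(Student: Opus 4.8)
The plan is to reduce a general quest of dimension $n$ (i.e. with scenario-dimension $d=n$) to the tight case by repeatedly \emph{tightening} it through quotient quests, running an induction on the top value of the order function. Throughout, Dido carries along a monomial factor $m\in{\cal M}$ of the current scenario $C$ (initially $m=0$, which lies in ${\cal M}$ since ${\cal M}$ is nonempty and downward closed) and monitors the quantity $w:=\max_{s\in{\cal S}}(\ord(s)-m(s))$, which takes values in $\frac1B\Z_{\ge 0}\cup\{\infty\}$. The goal is to drive $w$ down to $0$, because at $w=0$ one has $\ord=m$ on all of ${\cal S}$, so $m$ is a complete factor, $C$ is a monomial scenario, and Dido wins strictly by Hironaka's Lemma~\ref{lem:mon}.

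Suppose $w$ is finite and positive. Then Dido opens the quotient quest of $C$ with respect to $m$ at scale $q=w$ (Rule~\ref{rul:quot}). Its singular set is $\{s\in{\cal S}\mid\ord(s)-m(s)\ge w\}\subseteq{\cal S}$, and by the remark following Rule~\ref{rul:quot} the response scenario is \emph{tight}; moreover it is again of dimension $n$, as the quotient leaves $d$ unchanged. By hypothesis Dido possesses a strict winning strategy for tight quests of dimension $n$, and she plays it on this subordinate quest. Every center $\cc$ she selects lies in the singular set of the quotient, hence in ${\cal S}$, so the main quest stays strictly won, and each such $\cc$ has finite order, so no infinite-order node is created. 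The essential bookkeeping is provided by the commutativity rule for quotients, Rule~\ref{rul:comm}, Issue~\ref{srul:qtr}: after each blowup of center $\cc$ the subordinate quest is again the $q$-quotient of the transformed main scenario, now with factor $m'$ given by $m'(e)=m(\cc)+q-1$ and $m'(i(h))=m(h)$. Since $\cc$ lies in the quotient's singular set we have $m(\cc)+q\le\ord(\cc)$ by Rule~\ref{rul:quot}, Issue~\ref{srul:ord}, whence $m'(e)\le\ord(\cc)-1$; thus $m'$ is again a monomial factor of the transformed main scenario by Rule~\ref{rul:refblow}, Issue~\ref{srul:fact}, and it stays $\frac1B\Z$-valued because $q=w\in\frac1B\Z$.

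When this tight quotient quest is won, its singular set is empty, which forces $\ord'(s')-m'(s')<q=w$ for every singular node $s'$ of the transformed main scenario; combined with $\ord'-m'\ge 0$ (valid because $m'$ is a factor), the monitored quantity has dropped from $w$ to at most $w-\frac1B$. Replacing $C$ by this transform and $m$ by $m'$, Dido iterates. Since $w$ descends through the discrete set $\frac1B\Z_{\ge 0}$, it reaches $0$ after finitely many tightening rounds, at which point Lemma~\ref{lem:mon} finishes the play; all centers used lie in the singular set of the main quest, so the whole strategy is strict.

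What remains, and where I expect the real difficulty, is the disposal of the infinite-order nodes: for these the quotient is useless, since the quotient of $\ord=\infty$ is again $\infty$ and the quotient scenario would fail to be tight. The maximal infinite-order nodes have dimension $d$ by Rule~\ref{rul:scen}, Issue~\ref{srul:max}, and any singular node of dimension $d$ is automatically transversal, as applying Rule~\ref{rul:scen}, Issue~\ref{srul:istr}, with $K=\emptyset$ places it in ${\cal T}$ (indeed no dimension-$d$ node can lie below a jib). Such nodes are therefore admissible centers contained in ${\cal S}$, and I would have Dido blow them up as a preliminary reduction, before the finite induction above, checking from the blowup transform rules (Rule~\ref{rul:refblow}) that this removes the infinite-order locus without raising orders elsewhere and without producing new infinite-order nodes, so that the order function becomes finite everywhere and the tightening induction applies. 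The main obstacle is precisely this clearing step together with the persistence, under the entire play of the subordinate tight strategy, of both the tightness of the quotient and the factor relation of Rule~\ref{rul:comm}, Issue~\ref{srul:qtr}; these are what guarantee that the induction invariant $w$ is well-founded and strictly decreasing.
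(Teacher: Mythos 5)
Your proof is correct and follows essentially the same route as the paper's: carry a \factor{} $m$ (initially $0$), clear the infinite-order nodes by blowing up the transversal $d$-dimensional singular strata, then repeatedly open the $q$-quotient quest at the maximal value $q$ of $\ord-m$, win it with the assumed strict tight strategy, and conclude that $\max_{s\in{\cal S}}(\ord(s)-m(s))$ descends through $\frac{1}{B}\Z_{\ge 0}$ to zero, where Lemma~\ref{lem:mon} finishes. The ``clearing step'' you flag as the remaining obstacle is exactly what the paper disposes of in two lines via Rule~\ref{rul:scen}, Issues~\ref{srul:max} and \ref{srul:istr}, together with Rule~\ref{rul:refblow}, Issue~\ref{srul:btr}: each such blowup reduces by one the number of $d$-dimensional nodes of infinite order, and the persistence of the quotient relation under blowup is precisely Rule~\ref{rul:comm}, Issue~\ref{srul:qtr}, as you anticipated.
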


%-------------------------------------------------

\begin{proof}
Let $\mathfrak{Q}$ be a quest of dimension $n$. In order to win $\mathfrak{Q}$, Dido keeps track of a \factor{} $m:{\cal H}\to\Q$ of the scenarios of $\mathfrak{Q}$, for which the bound $B$ of $C$ is a common denominator of all values of $m$. Initially, $m$ may be chosen as zero; this is always a \factor. Assume that we are at a certain stage of the game, with actual scenario $C$ and \factor{} $m$ of  $\mathfrak{Q}$.

We distinguish two cases: First, $m$ is a complete \factor.
Then $\mathfrak{Q}$ can be won by Lemma~\ref{lem:mon}.

Second, $m$ is not a complete \factor. Then $\ord(s)>m(s)$ for at
least one singular node $s$ of $C$. Let $q>0$ be the maximal value 
of $\ord(s)-m(s)$ for $s\in{\cal S}$. We will show that Dido has a strategy to make this maximum drop.

If $q=\infty$, the maximal singular nodes for which $\ord(s)=\infty$ have dimension $d$ and are transversal, by Rule~\ref{rul:scen}, Issues~\ref{srul:max} and \ref{srul:istr}. By Rule~\ref{rul:refblow}, Issue~\ref{srul:btr}, the blowup of
one of these nodes reduces by $1$ the number of $d$-dimensional nodes
with order $\infty$. By a finite number of steps, Dido reaches a scenario $C$ of $\mathfrak{Q}$ where $q$ is finite. Along the way, she lifts the \factor{} $m$ according to  Rule~\ref{rul:comm}, Issue~\ref{srul:qtr}.

Dido calls now a \factorization{} quest creating thus the $q$-quotient $\mathfrak{Q}_1$ of $\mathfrak{Q}$ with respect to the \factor{} $m$. Let $C_1$ be the obtained scenario. It is tight, and its singular set consists of all nodes $s$ with $\ord(s)-m(s)=q$. After any blowup along a center which is in the singular set ${\cal S}_1$ of $\mathfrak{Q}_1$, the transform $(C_1)'$ of $C_1$ is tight, and it is also a $q$-quotient scenario for $C'$, the transform of $C$, by some \factor{} $m'$, see Rule~\ref{rul:comm}, Issue~\ref{srul:qtr}. Note that the bound $B'=B$ is again a common denominator for the values of $m'$. It follows that $\ord(s')-m'(s')\le q$ for all $s'$ in the singular set of $C'$, and equality holds for the singular nodes of $(C_1)'$.

By assumption, $\mathfrak{Q}_1$ can be won by a strict winning strategy, so Dido uses it
to win $\mathfrak{Q}_1$. After these blowups the singular set of the scenario of $\mathfrak{Q}_1$ has become empty. Denote by $C$ again the scenario of $\mathfrak{Q}$ obtained by the blowups, and by $m$ the lifted \factor{} of $C$ as prescribed by Rule~\ref{rul:comm}, Issue~\ref{srul:qtr}.\footnote{Observe that $C$ is not unique but given by Mephisto's responses to Dido's blowup moves.}

As the scenario of $\mathfrak{Q}_1$ is empty, Rule~\ref{rul:quot}, Issue~\ref{srul:ord}, 
implies that $\ord(s)<m(s)+q$ for all singular nodes $s$ of $C$. Hence the maximal value of $\ord(s)-m(s)$ has dropped below $q$. As it is a multiple of $\frac{1}{B}$, it may only drop a finite number of times; eventually it reaches zero, which means that $m$ has become a complete \factor{} of the scenario $C$ of $\mathfrak{Q}$. Then $\mathfrak{Q}$ can be won by Lemma~\ref{lem:mon}.
\end{proof}

%-------------------------------------------------

\begin{rem}
If $n=0$, then there are no tight scenarios of dimension $n$, by Rule~\ref{rul:scen},
Issue~\ref{srul:out}. The proof of Lemma~\ref{lem:rtight} can easily be adapted to show that there is always a strict winning strategy for quests of dimension~0.
\end{rem}

%-------------------------------------------------

\begin{lem}[Descent] \label{lem:descent} 
Let $n>0$. If there is a strict winning strategy for quests of dimension $n-1$,
then there is a strict winning strategy for tight quests of dimension $n$.
\end{lem}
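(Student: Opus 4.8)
The plan is to answer a \emph{descent} call (Rule~\ref{rul:down}) that replaces the tight quest $\mathfrak Q$ of dimension $n$ by a subordinate quest $\mathfrak Q_1$ of dimension $n-1$, to win $\mathfrak Q_1$ with the strategy assumed to exist, and to carry the victory back to $\mathfrak Q$ through the commutativity relations. I first dispose of the case in which the actual scenario $C$ of $\mathfrak Q$ has empty handicap, which is the only situation in which the call is permitted. After the call, $\mathfrak Q_1$ has dimension $d_1=n-1$, and by Rule~\ref{rul:down} its singular and transversal sets satisfy ${\cal S}_1={\cal S}$ and ${\cal T}_1={\cal T}$. Dido then simply plays the assumed strict winning strategy for $\mathfrak Q_1$. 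By Rule~\ref{rul:comm}, Issue~\ref{srul:descent}, every blowup keeps the two scenarios in the descent relation, so along the whole play ${\cal S}_1={\cal S}$ and ${\cal T}_1={\cal T}$ persist. Hence each center $\cc$ produced by the strategy lies in ${\cal S}_1={\cal S}$ and in ${\cal T}_1={\cal T}$ and is therefore admissible and singular for $\mathfrak Q$ as well (Rule~\ref{rul:refblow}, Issue~\ref{srul:center}); the main quest is never discarded and every blowup is strict. When the strategy empties ${\cal S}_1$, equality forces ${\cal S}=\emptyset$, so $\mathfrak Q$ is won, strictly.

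It remains to remove the hypothesis ${\cal H}=\emptyset$. Since blowups only enlarge the handicap (Rule~\ref{rul:refblow}, Issue~\ref{srul:hand}), it can be emptied only by a \emph{relaxation} call (Rule~\ref{rul:relax}), so I would run a secondary induction on $\mathrm{card}({\cal H})$, the base case ${\cal H}=\emptyset$ being the situation just treated. Releasing one jib $h$ yields a relaxation quest $\mathfrak Q_2$ that is again tight of dimension $n$ but has one jib fewer, with ${\cal S}_2={\cal S}$ and $\ord_2=\ord$. By the secondary induction hypothesis $\mathfrak Q_2$ has a strict winning strategy, and by Rule~\ref{rul:comm}, Issue~\ref{srul:relax}, the relaxation relation---hence the equality of singular sets---is maintained under blowup, so that a win of $\mathfrak Q_2$ is again a win of $\mathfrak Q$.

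The single delicate point, and the crux of the lemma, is that relaxation enlarges the transversal set, ${\cal T}\subseteq{\cal T}_2$ (Rule~\ref{rul:relax}). A center $\cc\in{\cal S}_2={\cal S}$ that is admissible for $\mathfrak Q_2$, i.e.\ $\cc\in{\cal T}_2$, is transversal to the remaining jibs but need not satisfy $\cc\in{\cal T}$; by the transversal clause of Rule~\ref{rul:relax} this failure occurs exactly when $\cc$ is neither below $h$ nor remote from $h$, that is, when the stratum $\cc$ meets the released jib non-transversally. Blowing up such a $\cc$ would discard $\mathfrak Q$, for which it is inadmissible. To prevent this, before performing the blowup Dido separates $\cc$ from $h$ by opening a transversality quest with respect to $h$ (Rule~\ref{rul:jib}): its transversal set is still ${\cal T}$ and its singular set consists of the singular nodes lying below $h$, all of dimension $\le d-1$ by Rule~\ref{rul:scen}, Issue~\ref{srul:istr}. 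Its maximal singular nodes of dimension $d-1$ are transversal by the same issue, and blowing them up removes, through Rule~\ref{rul:refblow}, Issue~\ref{srul:btr}, the part of the singular locus below $h$ of that top dimension; the residual problem is supported in dimension $\le n-2$ and, once its top stratum is cleared, is effectively of dimension $n-1$ and hence within the reach of the assumed dimension $n-1$ strategy. Emptying this transversality quest means, by Issue~\ref{srul:btr} and the commutativity of Rule~\ref{rul:comm}, Issue~\ref{srul:jib}, that the transforms of $\cc$ and of $h$ no longer share a node below them: $\cc$ has become transversal and can at last be blown up admissibly for $\mathfrak Q$.

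The hard part, and where I expect the real work to concentrate, is to make this interleaving terminate and to keep it strict: one must verify that only finitely many separations are triggered, that each genuinely lowers the contact between the center and the handicap without reviving it, that the auxiliary centers of the transversality quests stay inside the singular set ${\cal S}$ of the main quest, and that the inserted separations do not derail the secondary-inductive strategy for $\mathfrak Q_2$. This is precisely the combinatorial shadow of the transversality-ideal device of Encinas--Hauser, and its verification rests on the monotonicity encoded in Rule~\ref{rul:scen} (Issues~\ref{srul:istr} and \ref{srul:new}) and Rule~\ref{rul:refblow} (Issue~\ref{srul:btr}), together with the commutativity relations of Rule~\ref{rul:comm}.
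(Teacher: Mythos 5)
Your first paragraph (the case ${\cal H}=\emptyset$: descend, win the dimension-$(n-1)$ quest strictly, and pull the victory back through ${\cal S}_1={\cal S}$, ${\cal T}_1={\cal T}$ and Rule~\ref{rul:comm}) is sound and is indeed the endgame of the argument. The gap is in the reduction to that case. You correctly locate the crux --- after a relaxation, ${\cal T}\subseteq{\cal T}_2$ may be strict, so centers produced by the strategy for the relaxed quest can be inadmissible for $\mathfrak Q$ --- but your proposed repair does not close it. First, the ``secondary induction on $\mathrm{card}({\cal H})$'' is not well-founded: every blowup adjoins the exceptional jib to \emph{all} handicaps (Rule~\ref{rul:refblow}, Issue~\ref{srul:hand}), so the quantity you induct on grows during play rather than decreasing. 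Second, the separation device is circular as stated: a transversality quest with respect to $\{h\}$ is, by Rule~\ref{rul:jib}, a \emph{tight quest of dimension $d=n$} --- exactly the kind of quest the lemma is trying to handle --- and calling its residual part ``effectively of dimension $n-1$'' has no meaning in the game, where the inductive hypothesis applies only to quests whose dimension label actually is $n-1$; to reach that label you must relax away the whole handicap and then descend. Third, and decisively, you explicitly defer the termination and strictness of the interleaved separations (``only finitely many separations are triggered\dots''), and that deferred verification \emph{is} the content of the lemma.

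The paper's proof removes the need for any reactive interleaving by a different organization. It fixes the finite collection $G$ of \emph{critical sets} ${\cal K}\subseteq{\cal L}={\cal H}$ of the initial scenario (subsets of jibs admitting a common singular node below them) and, for each ${\cal K}$, builds the chain: transversality quest $\mathfrak P_{\cal K}$ with respect to ${\cal K}$, then relaxation $\mathfrak R_{\cal K}$ releasing \emph{all} of ${\cal L}$ at once (emptying the handicap), then descent $\mathfrak Q_{\cal K}$ --- a genuine dimension-$(n-1)$ quest to which the hypothesis applies. The key combinatorial point, which replaces your unproven separation step, is that these quests are won in \emph{decreasing order of} ${\cal K}$: for ${\cal K}$ maximal in $G$, any $z\in{\cal T}_{\cal K}\cap{\cal S}_{\cal K}$ lies below every jib of ${\cal K}$ and is remote from ${\cal L}\setminus{\cal K}$, hence lies in ${\cal T}$ by Rule~\ref{rul:relax}, Issue~\ref{srul:relax,T}, and is automatically admissible for $\mathfrak Q$ and for every other quest $\mathfrak P_{{\cal K}'},\mathfrak R_{{\cal K}'},\mathfrak Q_{{\cal K}'}$ in play. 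So no center ever needs to be ``fixed up,'' no quest is accidentally discarded, and termination is immediate because $G$ is finite and shrinks by one set per stage, ending with ${\cal K}=\emptyset$. You would need to supply an argument of this strength (or prove termination of your interleaving, including the case of nodes lying below several jibs simultaneously, which single-jib separations do not address) for the proposal to constitute a proof.
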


%-------------------------------------------------

\begin{proof}
Let $\mathfrak{Q}$ be a tight quest of dimension $n$, with scenario $C$.
Assume that $C$ has singular nodes, i.e., that the quest is not yet won.
Recall that a critical set for $C$ is a subset ${\cal K}$ of ${\cal H}$ for which
there is a singular node of $C$ below all nodes in
${\cal K}$. Let now $G$ be the collection of all critical sets of $C$, and 
let ${\cal L}={\cal H}$ be the set of all jibs of $C$.\footnote{We need to introduce a new letter here because ${\cal H}$ will change under blowup, whereas ${\cal L}$ will not change.} For any set ${\cal K}\in G$, we define successively three quests: the transversality quest $\mathfrak{P}_{\cal K}$ of $\mathfrak{Q}$, the relaxation quest $\mathfrak{R}_{\cal K}$  of $\mathfrak{P}_{\cal K}$ obtained by releasing ${\cal L}$ from $\mathfrak{P}_{\cal K}$, and the descent quest $\mathfrak{Q}_{\cal K}$ of $\mathfrak{R}_{\cal K}$.

Note that, by assumption, there exists a winning strategy 
for $\mathfrak{Q}_\emptyset$. But it would be too hasty to try to win 
$\mathfrak{Q}_\emptyset$ now, because many nodes that are admissible 
for $\mathfrak{Q}_\emptyset$ and might be needed as 
centers to win it may not be admissible for $\mathfrak{Q}$, so that the winning strategy 
for $\mathfrak{Q}_{\emptyset}$ would lead to the loss of $\mathfrak{Q}$.

In view of this, we follow a procedure which strictly wins the quests $\mathfrak{Q}_{\cal K}$ for ${\cal K}\in G$ one by one, starting with a maximal ${\cal K}$. In the course of this procedure, it may be necessary to blow up nodes that are not singular but only admissible for the involved quests. In other words it is not possible to achieve simultanuous strict winnings for all quests.\footnote{Compare this argument with the treatment of the transversality problem in \cite{Encinas_Hauser:02} where the transversality ideal is defined as a product of ideals. Here, however, we take a construction which mimics the sum of ideals.}  

As transversality and descent quests do not affect the transversal nodes,
$\mathfrak{Q}$ and $\mathfrak{P}_{\cal K}$ have the same transversal set ${\cal T}$, as well as $\mathfrak{R}_{\cal K}$ and $\mathfrak{Q}_{\cal K}$, whose transversal set will be denoted by ${\cal T}_{\cal K}$. Similarly, $\mathfrak{P}_{\cal K}$, $\mathfrak{R}_{\cal K}$ and $\mathfrak{Q}_{\cal K}$ all have the same singular set ${\cal S}_{\cal K}$. 

Assume that ${\cal K}$ is maximal and let $z\in{\cal T}_{\cal K}\cap{\cal S}_{\cal K}$. We claim that $z$ is an admissible center for $\mathfrak{Q}$ and for the quests $\mathfrak{P}_{{\cal K}'}$, $\mathfrak{R}_{{\cal K}'}$ and $\mathfrak{Q}_{{\cal K}'}$, for all ${\cal K}'\in G$. 

Since $z\in{\cal S}_{\cal K}$, it follows that $z\le h$ for all $h\in {\cal K}$. By maximality of ${\cal K}$, $z$ is remote from ${\cal L}\setminus {\cal K}$. Hence $z\in{\cal T}$, by Rule~\ref{rul:relax}, Issue~\ref{srul:relax,T}. From ${\cal S}_{\cal K}\subseteq{\cal S}$ it follows that $z$ is admissible for $\mathfrak{Q}$. 

Let now ${\cal K}'\in G$ be some other critical set. Then ${\cal T}\subseteq{\cal T}_{{\cal K}'}$, hence $z\in{\cal T}_{{\cal K}'}$. We distinguish two cases. If ${\cal K}'\subseteq {\cal K}$, then ${\cal S}_{\cal K}\subseteq{\cal S}_{{\cal K}'}$, hence $z\in{\cal S}_{{\cal K}'}$ and $z$ is admissible for
$\mathfrak{P}_{{\cal K}'}$, $\mathfrak{R}_{{\cal K}'}$, and $\mathfrak{Q}_{{\cal K}'}$.
Otherwise, choose some $h'\in {\cal K}'\setminus {\cal K}$.
By maximality of ${\cal K}$, the node $z$ is remote from $h'$, and therefore
it is also remote from ${\cal S}_{\cal K}$. Again, it follows that $z$ is admissible for
$\mathfrak{P}_{{\cal K}'}$, $\mathfrak{R}_{{\cal K}'}$, and $\mathfrak{Q}_{{\cal K}'}$.

By assumption, there is a strict winning strategy for $\mathfrak{Q}_{\cal K}$.
Dido applies it to strictly win $\mathfrak{Q}_{\cal K}$. By the above observations (still assuming that ${\cal K}$ is maximal), the chosen centers are also admissible for the quests  $\mathfrak{Q}$, $\mathfrak{P}_{{\cal K}'}$, $\mathfrak{R}_{{\cal K}'}$, and $\mathfrak{Q}_{{\cal K}'}$ for all ${\cal K}'\in G$. Therefore they remain open while winning $\mathfrak{Q}_{\cal K}$. And when $\mathfrak{Q}_{\cal K}$ is won,
there will be no node below all jibs in ${\cal K}$ which is singular for $\mathfrak{Q}$.
At this point, we remove ${\cal K}$ from the collection $G$ of critical sets,
take another maximal critical set in $G$ in place of ${\cal K}$ and repeat.
The reasoning that none of the remaining quests becomes invalid still applies.

The last maximal set which Dido removes from $G$ using the above strategy will be the emptyset $\emptyset$.
By then, there is no singular node at all, 
which means that $\mathfrak{Q}$ is resolved.
\end{proof}

%-------------------------------------------------

\begin{thm}
There is a strict winning strategy for Salmagundy.
\end{thm}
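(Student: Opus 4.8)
The plan is to prove the statement by induction on the dimension $d$ of the quest, using the three preceding lemmas as the engine. The key observation is that the Relaxation Lemma (Lemma~\ref{lem:rtight}) and the Descent Lemma (Lemma~\ref{lem:descent}) interlock perfectly: the former upgrades a strict winning strategy for \emph{tight} quests of a fixed dimension to one for \emph{all} quests of that same dimension, while the latter converts a strict winning strategy for all quests of dimension $d-1$ into one for tight quests of dimension $d$. Composing the two implications therefore advances the induction by exactly one dimension, and Lemma~\ref{lem:mon} (Hironaka) need not be invoked again here, since it is already consumed inside the proof of Lemma~\ref{lem:rtight}.

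First I would establish the base case $d=0$. By the remark following Lemma~\ref{lem:rtight}, there are no tight scenarios of dimension $0$ with non-empty singular set, and the argument of that lemma adapts to show that every quest of dimension $0$ admits a strict winning strategy outright. This anchors the induction.

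For the inductive step I assume, for some $d>0$, that every quest of dimension $d-1$ admits a strict winning strategy. Applying the Descent Lemma (Lemma~\ref{lem:descent}) with $n=d$ then yields a strict winning strategy for every \emph{tight} quest of dimension $d$. Feeding this conclusion into the Relaxation Lemma (Lemma~\ref{lem:rtight}) with $n=d$ produces a strict winning strategy for \emph{every} quest of dimension $d$. By induction, every quest of every dimension admits a strict winning strategy. Since the main quest of Salmagundy lives on a scenario of some fixed dimension $d\le n=\dim(\Gamma)$, Dido can strictly win it, which is exactly the assertion.

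I do not expect a genuine obstacle at this stage: all of the combinatorial difficulty has already been absorbed into the three lemmas, and the present argument is a one-line induction assembling them. The only points that deserve care are confirming that the two implications really do compose into a single downward step in $d$, and that the base case is handled by the remark (an unconditional strategy in dimension~$0$) rather than by the Descent Lemma, whose hypothesis requires $n>0$. Once these are noted, the theorem follows immediately.
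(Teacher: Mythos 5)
Your proposal is correct and is precisely the argument the paper intends when it states that the theorem is ``an obvious consequence of the two lemmata and the remark above'': a downward induction on the dimension, anchored at $d=0$ by the remark, with each step composing the Descent Lemma and the Relaxation Lemma. Nothing is missing and nothing differs in substance from the paper's route.
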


\begin{proof}
This is now an obvious consequence of the two lemmata and the remark above.
\end{proof}

\goodbreak

%-------------------------------------------------
%            BIBLIOGRAPHY
%-------------------------------------------------

%-------------------------------------------------
%        ADDRESSES
%-------------------------------------------------
\

\noindent 
Herwig Hauser\\
Fakult\"at f\"ur Mathematik\\
Universit\"at Wien, Austria\\
herwig.hauser@univie.ac.at\\

\noindent
Josef Schicho \\
Johann Radon Institut\\
\"Osterreichische Akademie der Wissenschaften, Linz, Austria\\
josef.schicho@oeaw.ac.at

\end{document}